\newcommand{\RR}{\mathbb R}
\newcommand{\PP}{\mathbb P}
\newcommand{\ZZ}{\mathbb Z}
\newcommand{\TT}{\mathbb T}
\newcommand{\trop}{\operatorname{trop}}
\newcommand{\bb}{\mathcal B}
\newcommand{\cc}{\mathcal C}
\newcommand{\be}{\mathbf e}
\newcommand{\MM}{{\boldsymbol M}}
\newcommand{\UU}{{\boldsymbol U}}
\newcommand{\kk}{\mathbbm k}
\newcommand{\bmu}{\boldsymbol \mu}
\newcommand{\bv}{{\mathbf v}}
\newcommand{\bu}{{\mathbf u}}
\theoremstyle{definition}
\newtheorem{thm}{Theorem}[subsection]
\newtheorem{cor}[thm]{Corollary}
\newtheorem{lem}[thm]{Lemma}
\newtheorem{prop}[thm]{Proposition}
\newtheorem{defn}[thm]{Definition}
\newtheorem{eg}[thm]{Example}
\newtheorem{rem}[thm]{Remark}
\newtheorem{notation}[thm]{Notation}
\newtheorem{maintheorem}{Theorem}	
\title{Tropical Flag Varieties}
\author{Madeline Brandt, Christopher Eur, Leon Zhang}
\address{Department of Mathematics, Brown University,  Providence, RI 02912}
\email{madeline\_brandt@brown.edu}
\address{Department of Mathematics, Stanford University.  Stanford, CA. USA}
\email{chriseur@stanford.edu}
\address{University of California, Berkeley. Berkeley, CA. USA}
\email{leonyz@berkeley.edu}
\begin{document}

\maketitle

\begin{abstract}
Flag matroids are combinatorial abstractions of flags of linear subspaces, just as matroids are of linear subspaces.  We introduce the flag Dressian as a tropical analogue of the partial flag variety, and prove a correspondence between: (a) points on the flag Dressian, (b) valuated flag matroids, (c) flags of projective tropical linear spaces, and (d) coherent flag matroidal subdivisions.
We introduce and characterize projective tropical linear spaces, which serve as a fundamental tool in our proof.  We apply the correspondence to prove that all valuated flag matroids on ground set up to size 5 are realizable, and give an example where this fails for a flag matroid on 6 elements.
\end{abstract}

\vspace{-2mm}
\section{Introduction}
\label{introduction}

The Grassmannian $Gr(r;n)$ over a field $\kk$ parameterizes $r$-dimensional linear subspaces in $\kk^{[n]}$, or equivalently, realizations of matroids of rank $r$ on the ground set $[n] = \{1,\ldots, n\}$.
It can be embedded in $\mathbb{P}\big(\kk^{\binom{[n]}{r}}\big)$, where it is cut out by the quadratic Grassmann-Pl\"{u}cker relations.
For a fixed matroid $M$, one can modify the Grassmann-Pl\"{u}cker relations to cut out only the points in the Grassmannian realizing $M$.
The tropical prevariety of these equations is the \textbf{Dressian of $M$}, denoted $Dr(M)$, which was introduced in \cite{tropplanes}.  The Dressian of a loopless matroid $M$ has multiple interpretations as
\begin{enumerate}[label = (\alph*)]
\item the tropical prevariety of (modified) Grassmann-Pl\"ucker relations,
\item the set of all valuated matroids with underlying matroid $M$ \cite{dress},
\item the weight vectors inducing a matroidal subdivision of the base polytope of $M$ \cite{speyer}, or
\item the parameter space of all tropical linear spaces given by $M$ \cite{speyer}.
\end{enumerate}

The (partial) flag variety $Fl(r_1,\ldots,r_k;n)$ parameterizes flags of linear spaces $L_1 \subseteq \cdots \subseteq L_s$ in $\kk^{[n]}$ where $\dim_\kk(L_i) = r_i$.
A point on $Fl(r_1, \ldots, r_k;n)$ corresponds to a realization of a \textbf{flag matroid}, which is a sequence of matroids $\MM = (M_1, \ldots, M_k)$ of ranks $(r_1, \ldots, r_k)$ on $[n]$ such that every circuit of $M_{j}$ is a union of circuits of $M_{i}$ for all $1 \leq i <j\leq k$.
Flag matroids are the Coxeter matroids of type $A$ \cite{BGW03}.
The flag variety $Fl(r_1, \ldots, r_k;n)$ can be embedded in $\mathbb{P}\big( \kk^{\binom{[n]}{r_1}}\big) \times \cdots \times\mathbb{P}\big( \kk^{\binom{[n]}{r_k}}\big)$,
where it is cut out by the quadratic incidence-Pl\"{u}cker relations
(see Equation \eqref{eqn:tropIP})
in addition to the Grassmann-Pl\"ucker relations. 
For a fixed flag matroid $\MM$, one can modify these relations to cut out only the points in the flag variety which realize $\MM$.  We define the \textbf{flag Dressian of $\MM$}, denoted $FlDr(\MM)$, as the tropical prevariety of these equations, and establish several characterizations.

\begin{maintheorem}
\label{thm:main}
Let $\bmu = (\mu_1, \ldots, \mu_k)$ be a sequence of valuated matroids such that its sequence of underlying matroids $\MM = (M_1, \ldots, M_k)$ is a flag matroid.  Then the following are equivalent:
\begin{enumerate}[label = (\alph*)]
\item \label{thm:pt-tropIP} $\bmu$ is a point on $FlDr(\MM)$, i.e.\ it satisfies tropical incidence-Pl\"ucker relations, 
\item \label{thm:pt-valflag} $\bmu$ is a valuated flag matroid  with underlying flag matroid $\MM$,
\item \label{thm:pt-subdiv} $\bmu$ induces a subdivision of the base polytope of $ \MM$ 
into base polytopes of flag matroids, and
\item \label{thm:pt-troplin} the projective tropical linear spaces $\overline{\trop}(\mu_i)$ form a flag $\overline{\trop}(\mu_1) \subseteq \cdots \subseteq \overline{\trop}(\mu_{k})$
\end{enumerate}
The concepts appearing here are introduced in \Cref{defn:flagDressian} for \ref{thm:pt-tropIP},
\Cref{defn:valflagmat} for \ref{thm:pt-valflag}, \Cref{defn:baseflagmat} for \ref{thm:pt-subdiv}, and \Cref{thm:main2}.\ref{thm2:pt1} for \ref{thm:pt-troplin}.
\end{maintheorem}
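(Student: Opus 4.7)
The plan is to leverage the single-matroid Dressian theorem as a starting point and reduce each equivalence to a statement about the pairwise compatibility between consecutive valuated matroids $\mu_i$ and $\mu_{i+1}$. Since each $\mu_i$ is given as a valuated matroid with underlying matroid $M_i$, the classical correspondence already yields, for each coordinate, a matroidal subdivision of the base polytope of $M_i$ and a projective tropical linear space $\overline{\trop}(\mu_i)$; the task in all four implications is to show that the extra flag data linking $\mu_i$ to $\mu_{i+1}$ is consistently encoded across the four viewpoints. Because a flag matroid condition is itself a pairwise compatibility of consecutive matroids, it suffices to work one pair at a time.

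I would first prove \ref{thm:pt-tropIP} $\Leftrightarrow$ \ref{thm:pt-valflag} by definitional unwinding: the valuated flag matroid axiom of \Cref{defn:valflagmat} is a tropical exchange condition relating basis valuations of $\mu_i$ and $\mu_{i+1}$, and rewriting it term-by-term should produce precisely the tropical incidence-Pl\"ucker relations defining $FlDr(\MM)$, in analogy with how the ordinary valuated matroid exchange axiom corresponds to the tropical Grassmann--Pl\"ucker relations. Similarly, \ref{thm:pt-tropIP} $\Leftrightarrow$ \ref{thm:pt-subdiv} would be handled by invoking the general principle that a weight vector induces a matroidal subdivision iff it satisfies the appropriate tropical Pl\"ucker relations: a cell in the $\bmu$-induced regular subdivision of the base polytope of $\MM$ is a flag matroid polytope iff its projections to the coordinate base polytopes of $M_i$ fit together as compatible matroidal cells, and this compatibility is exactly what the tropical incidence-Pl\"ucker relations enforce.

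The equivalence \ref{thm:pt-tropIP} $\Leftrightarrow$ \ref{thm:pt-troplin} is where the bulk of the geometric content lies. Using the cocircuit description of projective tropical linear spaces established in \Cref{thm:main2}, the containment $\overline{\trop}(\mu_i) \subseteq \overline{\trop}(\mu_{i+1})$ can be tested by substituting valuated cocircuits of $\mu_i$ into the tropical Pl\"ucker equations for $\mu_{i+1}$. Tropically, such a substitution should leave behind precisely the tropical incidence-Pl\"ucker relations on the pair $(\mu_i, \mu_{i+1})$, giving one direction, while the converse would proceed by producing each point of $\overline{\trop}(\mu_i)$ inside $\overline{\trop}(\mu_{i+1})$ from cocircuit data and the incidence-Pl\"ucker hypothesis.

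The principal obstacle is setting up \ref{thm:pt-troplin} cleanly: one must fix a notion of projective tropical linear space in which inclusions between spaces of different ranks are a tractable algebraic condition, since affine tropical linear spaces are sensitive to scaling and not naturally comparable across rank strata. Once the framework is in place via \Cref{thm:main2} and its cocircuit characterization, matching the containment condition with the tropical incidence-Pl\"ucker relations reduces to careful manipulation of sign patterns and a tropicalization of the classical incidence-Pl\"ucker identity, which I expect to constitute the technical heart of the argument.
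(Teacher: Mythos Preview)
Your treatment of \ref{thm:pt-tropIP}$\Leftrightarrow$\ref{thm:pt-valflag} and \ref{thm:pt-valflag}$\Leftrightarrow$\ref{thm:pt-troplin} matches the paper almost exactly: \Cref{prop:FlDr=valflagmat} is the definitional rewriting you describe, and \Cref{thm:tropquotient} proves the containment $\overline\trop(\mu)\subseteq\overline\trop(\nu)$ by feeding valuated cocircuits of $\mu$ (via \Cref{thm:main2}\ref{thm2:pt4}) into the valuated circuit equations of $\nu$ (via \Cref{thm:main2}\ref{thm2:pt2}) and recognizing the result as the tropical incidence-Pl\"ucker relations. One small correction: the equations you substitute into are the circuit equations of $\nu$, not its Grassmann--Pl\"ucker relations.

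The real divergence is in how \ref{thm:pt-subdiv} is linked to the rest. Your sketch asserts that flag-matroidality of each cell is ``exactly what the tropical incidence-Pl\"ucker relations enforce,'' but this is only one direction, and even that direction needs an argument. The paper does not attempt a direct \ref{thm:pt-tropIP}$\Leftrightarrow$\ref{thm:pt-subdiv}. For \ref{thm:pt-valflag}$\Rightarrow$\ref{thm:pt-subdiv} (\Cref{thm:valflagsubdiv}) the key lemma you are missing is that passing to initial matroids preserves valuated quotients: if $\mu\twoheadleftarrow\nu$ then $\mu_{\min}\twoheadleftarrow\nu_{\min}$, which after a linear shift (\Cref{lem:linearshift}) shows every face $\Delta_{\sum\mu_i}^{\overline\bu}$ is a flag matroid. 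For the converse, the paper closes the cycle via \ref{thm:pt-subdiv}$\Rightarrow$\ref{thm:pt-troplin} (\Cref{thm:troplinsubdiv}), and this step rests entirely on the characterization \Cref{thm:main2}\ref{thm2:pt3} of $\overline\trop(\mu)$ as the coloopless locus of the dual complex of $\mu^*$, together with \Cref{lem:propagate} that colooplessness propagates up a flag. You never invoke \ref{thm2:pt3}, and without it there is no evident mechanism to extract the inclusion of tropical linear spaces (or the incidence-Pl\"ucker relations) from a purely polyhedral hypothesis on the mixed subdivision; this is the genuine gap in your plan.
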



%

\begin{eg}
\label{eg:pointplane}
Consider the flag matroid $\UU_{1,3;4} = (U_{1,4},U_{3,4})$ consisting of uniform matroids on 4 elements. Its flag Dressian of $\UU_{1,3;4}$, denoted $FlDr({\UU_{1,3;4}})$, is
\begin{enumerate}[label=(\alph*)]
\item the tropical prevariety of the flag variety $Fl(1,3;4)$ embedded in $\mathbb{P}\big(\kk^{\binom{4}{1}}\big) \times\mathbb{P}\big(\kk^{\binom{4}{3}}\big)$ by the single equation $p_1p_{234}-p_2p_{134}+p_3p_{124}-p_4p_{123}$,
\item the valuations on $U_{1,4}$ and $U_{3,4}$ making $(U_{1,4},U_{3,4})$ a valuated flag matroid,
\item the space parameterizing weights that induce flag matroidal subdivisions of the base polytope of $(U_{1,4},U_{3,4})$, which is the cuboctohedron 
$
\operatorname{Conv}\big(
\sigma (1,1,2,0) \mid \sigma \in S_4
\big)\subset \mathbb{R}^4 
$, and
\item the space parameterizing the data of a (tropical) point on a tropical plane.
\end{enumerate}
It is a pure simplicial fan in $\RR^{\binom{4}{1}}/\RR\mathbf 1 \times \RR^{\binom{4}{3}}/\RR\mathbf 1$ of dimension 5 with a 3 dimensional lineality space.  The 3 dimensional lineality space corresponds to the 3 dimensional freedom of selecting the location of the vertex of the tropical plane. Modulo the lineality space, it consists of 4 rays and 6 two-dimensional cones, as does a tropical plane in 3-space.  Up to combinatorial equivalence, there are two types of nontrivial subdivisions of the cuboctohedron into smaller flag matroid polytopes, with the corresponding data of a point in a tropical plane as indicated in Figure \ref{fig:df134_subs}. For more examples, see Figures \ref{fig:df124_subs}, \ref{fig:fl4_cells_edges}, and \ref{fig:fl4_cells}.
\end{eg}

\begin{figure}[h]
\centering
\begin{subfigure}{.24\textwidth}
  \centering
  \includegraphics[width=.9\linewidth]{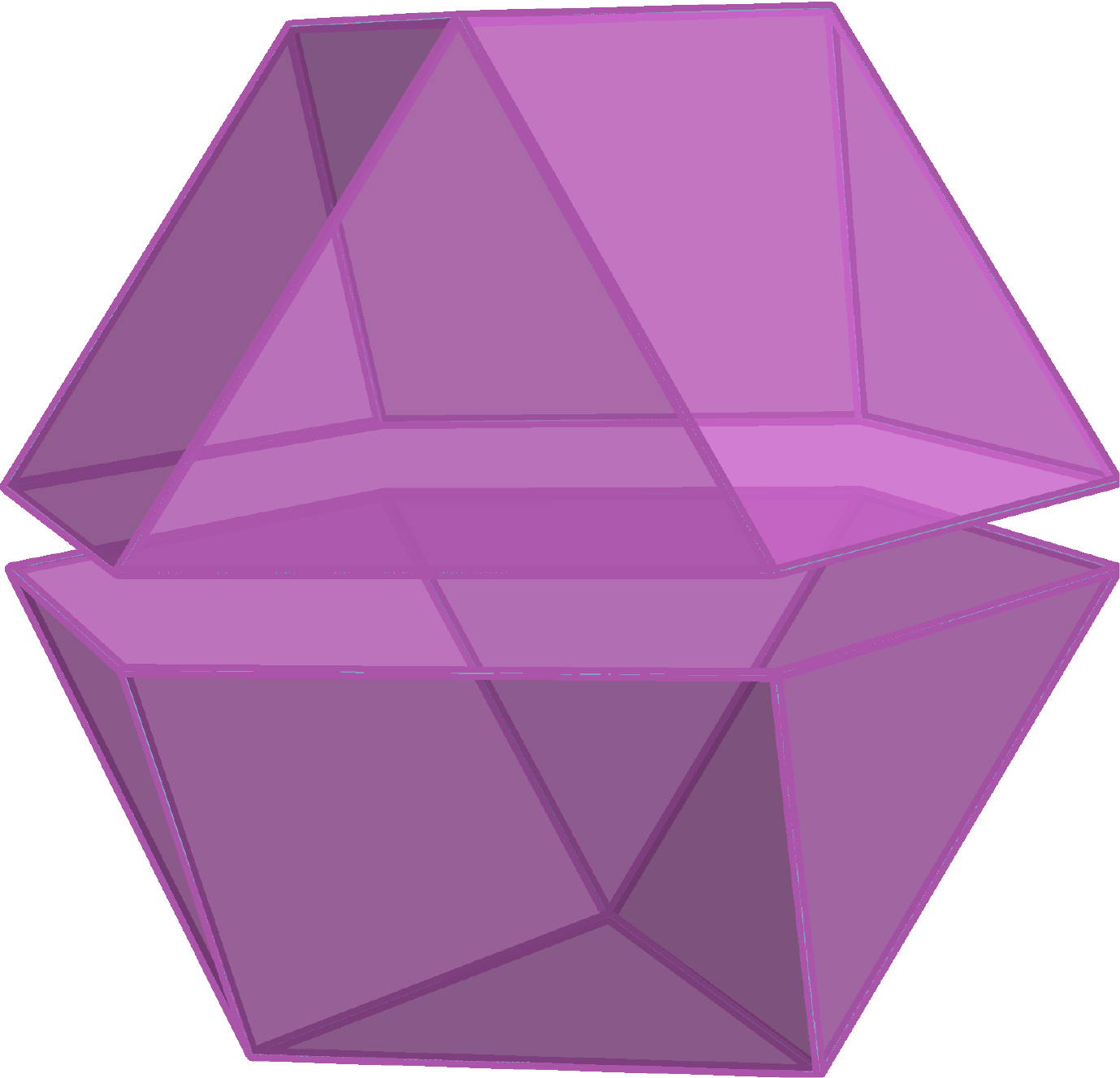}
\end{subfigure}%
\begin{subfigure}{.24\textwidth}
  \centering
  \includegraphics[width=.9\linewidth]{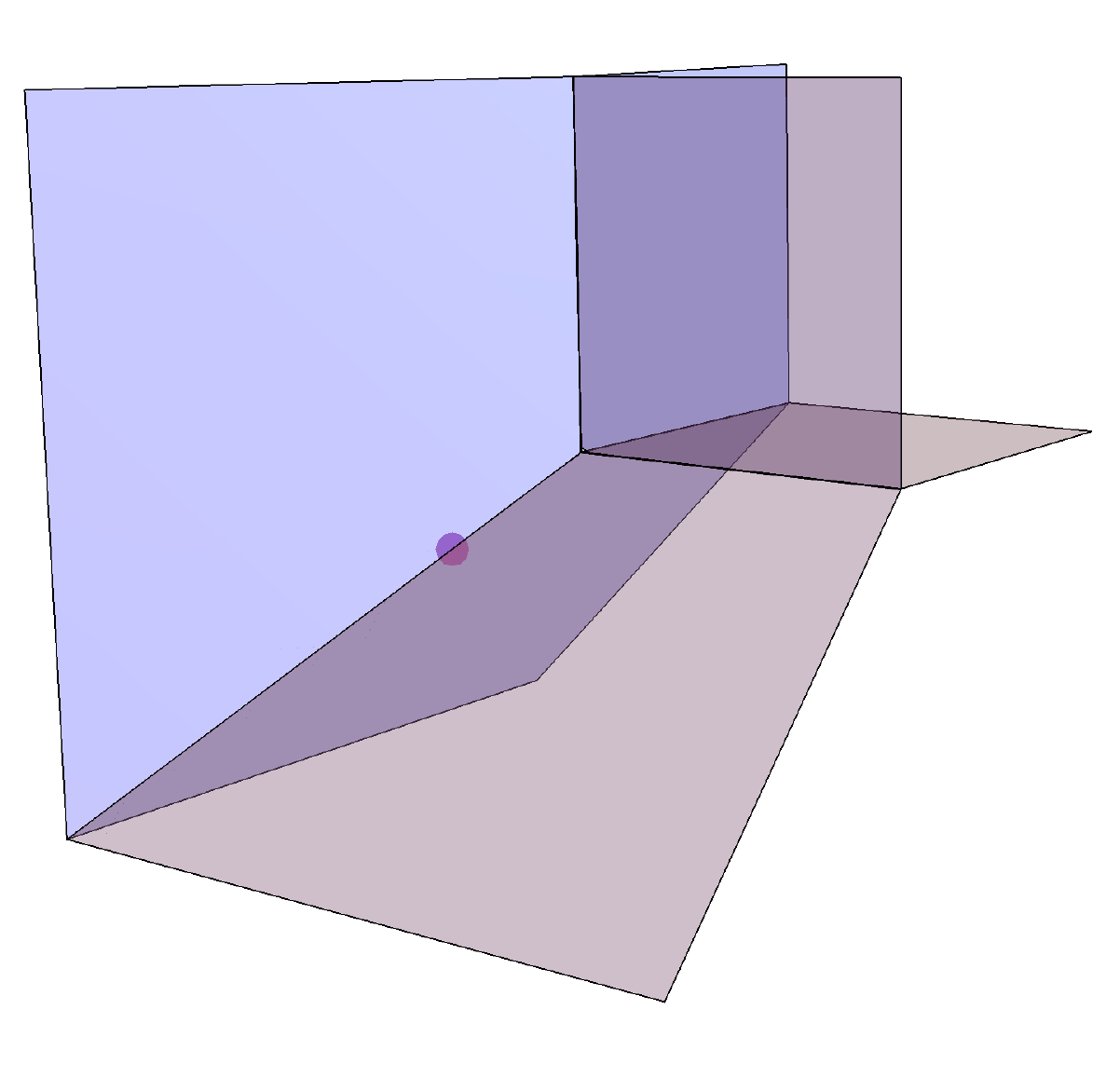}
\end{subfigure}%
\begin{subfigure}{.24\textwidth}
  \centering
  \includegraphics[width=.9\linewidth]{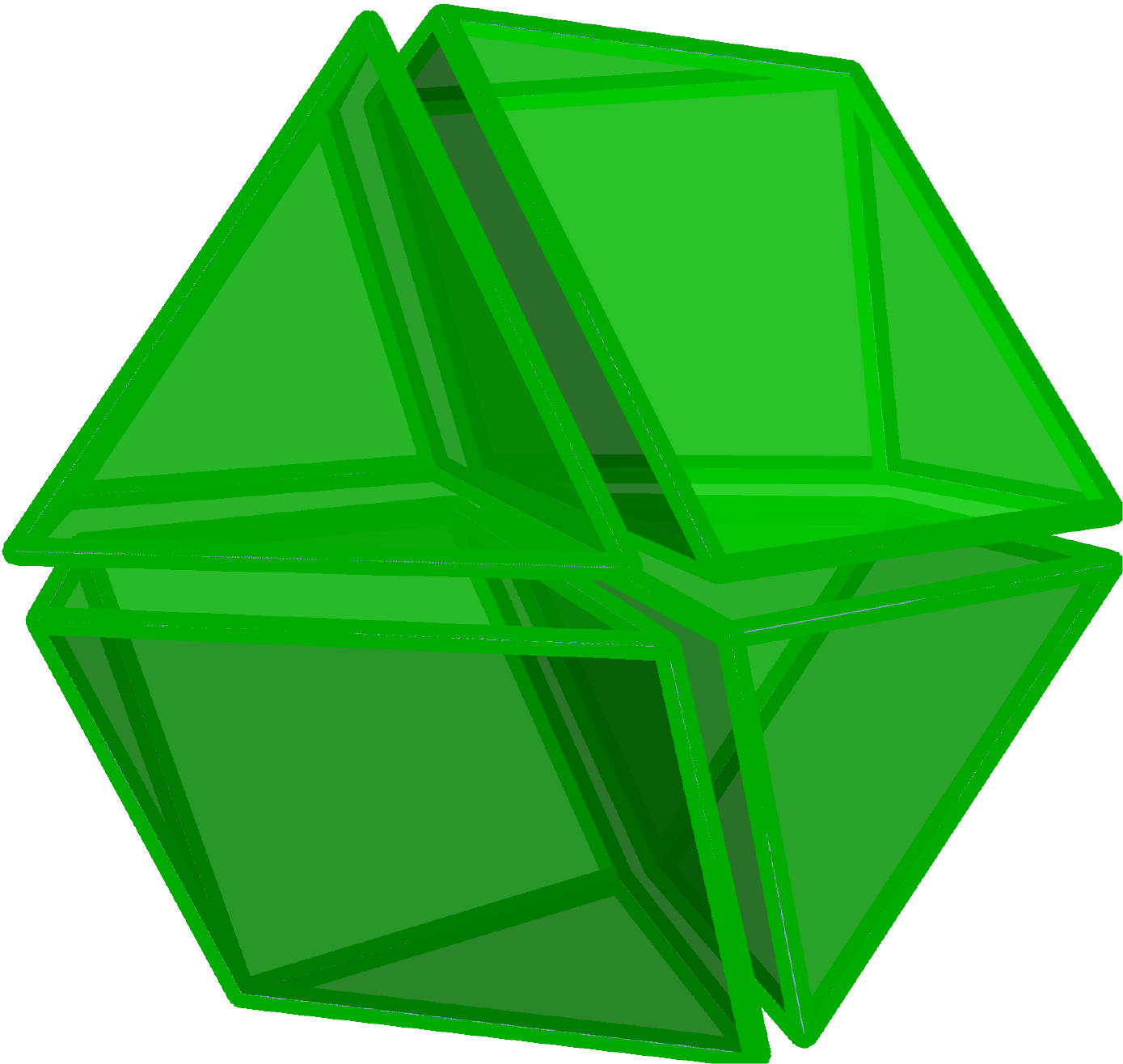}
\end{subfigure}
\begin{subfigure}{.24\textwidth}
  \centering
  \includegraphics[width=.9\linewidth]{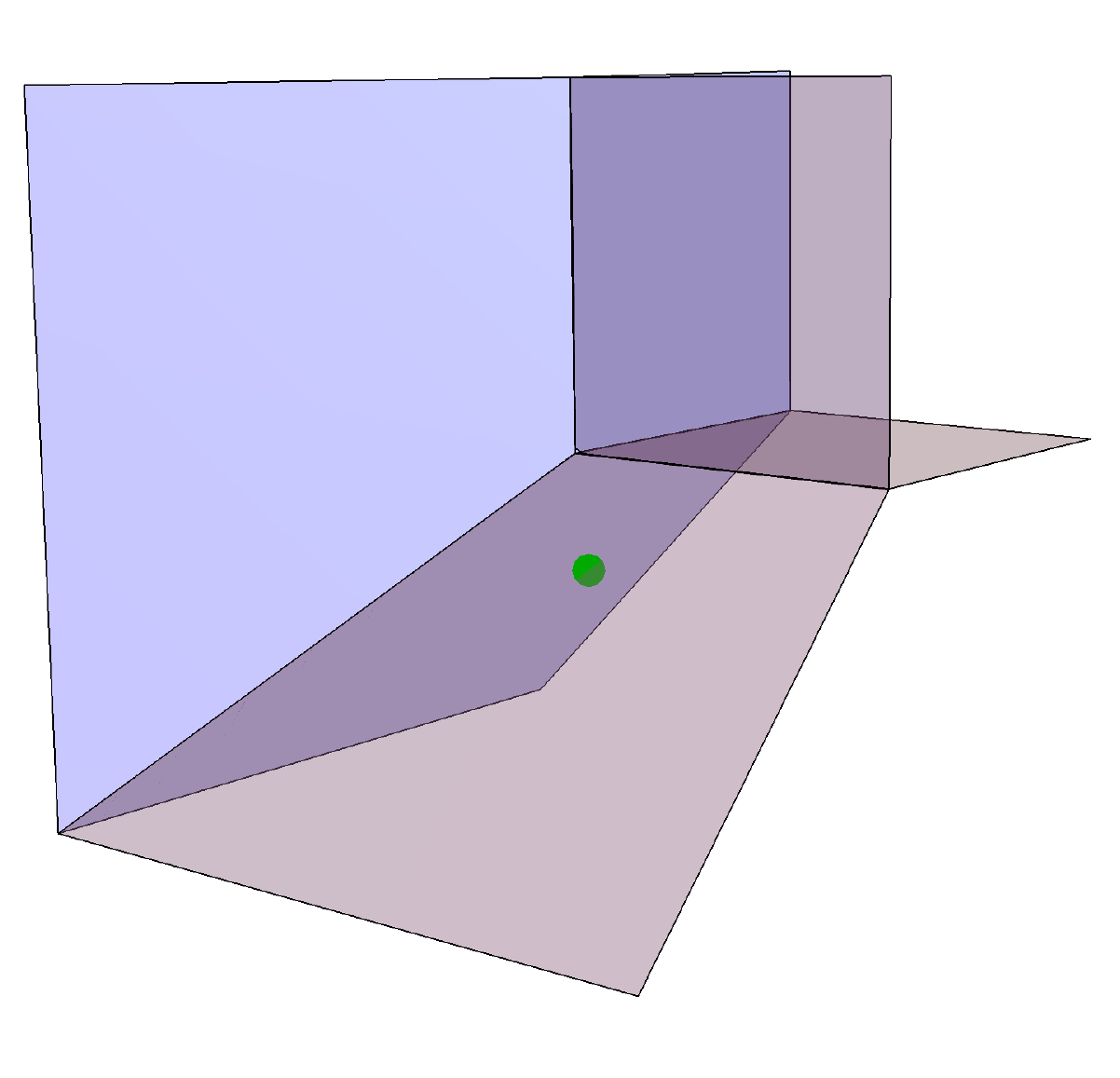}
\end{subfigure}
\caption{Base polytope subdivisions and their tropical flags for $(U_{1,4},U_{3,4})$.}
\label{fig:df134_subs}
\end{figure}


A fundamental tool in our proof of \Cref{thm:main} is the notion of \emph{projective tropical linear spaces}.  The usual tropical linear spaces are defined only for matroids without loops, which is a harmless restriction in studying matroids, but not in studying flag matroids (\Cref{rem:looplessbad}).
In order to treat matroids with and without loops consistently, we introduce projective tropical linear spaces, which have previously appeared in the literature in various guises (\Cref{rem:previoustroplin}).
We collect their characterizations, adding two new ones  (\ref{thm2:pt3} and \ref{thm2:pt5}) to this list (\Cref{thm:main2}).  See \S \ref{subsec:projtropgeo} for terminology in projective tropical geometry, and \S \ref{subsection:dressians} for terminology concerning valuated matroids.

\begin{maintheorem}\label{thm:main2}
Let $\mu$ be a valuated matroid on a ground set $[n]$.  Let $\ell\subseteq [n]$ be the set of loops of its underlying matroid.  The following sets in the tropical projective space $\PP(\TT^{[n]})$ coincide:
\begin{enumerate}[label=(\roman*)]
\item \label{thm2:pt1} The \textbf{projective tropical linear space}, defined as
\[
\overline{\trop}(\mu) := \displaystyle \bigcup_{\emptyset\subseteq S \subsetneq [n]} \Big( \trop(\mu/_S)\times \{\infty\}^S \Big) \subset \PP(\TT^{[n]}),
\]

\item \label{thm2:pt2}The projective tropical prevariety of the valuated circuits of $\mu$, i.e.\
\[
\bigcap_{\tiny \begin{matrix} \textnormal{valuated}\\ \textnormal{circuits $\mathbf C$}\end{matrix}} \Big\{ \overline\bu \in \PP(\TT^{[n]}) \ \left| \ \textnormal{the minimum is achieved at least twice among } \{C_i + v_i\}_{i\in [n]}\Big\}\right.,
\]

\item \label{thm2:pt3} The union of coloopless cells of the closure of the dual complex of $\mu^*$ in $\PP(\TT^{[n]})$, i.e.\
\[
\begin{split}
\Big\{\overline\bu\in \PP(\TT^{[n]}) \mid \Delta_{\mu^*}^{\overline\bu} \textnormal{ is a base polytope of a coloopless matroid}\Big\},
\end{split}
\]

\item \label{thm2:pt4} The tropical span of the valuated cocircuits of $\mu$, i.e.\
\[
\left\{\left. \begin{matrix} \textnormal{the image in $\PP(\TT^{[n]})$ of} \\
(a_1\odot \mathbf{C_1^*}) \oplus \cdots \oplus (a_l \odot \mathbf{C_m^*}) \in \TT^{[n]}
\end{matrix}
\ \right|\  \begin{matrix}
\mathbf{C_i^*}\in \TT^{[n]} \textnormal{ a valuated cocircuit of } \mu,\\
\ a_i \in \RR, \quad \forall 1\leq i \leq m
\end{matrix}
\right\},
\]

\item \label{thm2:pt5}The closure of $\trop(\mu/_\ell) \times \{\infty\}^\ell$ inside $\PP(\TT^{[n]})$.
\end{enumerate}
\end{maintheorem}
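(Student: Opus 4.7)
The plan is to establish the equivalences by first reducing to the classical loopless case and then propagating each characterization to the loopful setting. Throughout, write $\ell\subseteq [n]$ for the set of loops of the underlying matroid of $\mu$, so that $\mu/_\ell$ is a valuated matroid on $[n]\setminus \ell$ whose underlying matroid is loopless. For such loopless valuated matroids, the equivalence of the analogues of \ref{thm2:pt1}, \ref{thm2:pt2}, and \ref{thm2:pt4} is classical, due to Dress--Wenzel and Speyer; I will use this as the base case.

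First I would show \ref{thm2:pt1} $\Leftrightarrow$ \ref{thm2:pt5}. The closure of $\trop(\mu/_\ell)\times\{\infty\}^\ell$ in $\PP(\TT^{[n]})$ decomposes by which coordinates are $\infty$; a standard tropical compactification argument identifies the boundary stratum indexed by $T\sqcup\ell$ (with $T\subseteq [n]\setminus\ell$) as $\trop((\mu/_\ell)/_T)\times\{\infty\}^{T\sqcup\ell}$. Since contraction is transitive, $(\mu/_\ell)/_T=\mu/_{T\cup\ell}$, recovering precisely those terms of \ref{thm2:pt1} indexed by $S\supseteq\ell$; for $S\not\supseteq\ell$ the contraction $\mu/_S$ still has loops at $\ell\setminus S$, and by convention $\trop(\mu/_S)$ already forces those coordinates to $\infty$, absorbing such terms into ones indexed by $S\cup\ell$.

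Next I would prove \ref{thm2:pt1} $\Leftrightarrow$ \ref{thm2:pt2} by a stratum-by-stratum analysis: a point $\overline\bu\in\PP(\TT^{[n]})$ whose $\infty$-coordinates form the set $S$ satisfies the tropical valuated-circuit relations of $\mu$ iff its finite part satisfies those of $\mu/_S$, using the standard fact that valuated circuits of $\mu/_S$ are exactly the restrictions of valuated circuits of $\mu$ whose support avoids $S$. The equivalence \ref{thm2:pt2} $\Leftrightarrow$ \ref{thm2:pt4} then follows from valuated matroid duality (cocircuits of $\mu$ are circuits of $\mu^*$) together with the classical Speyer theorem that for a loopless valuated matroid the tropical span of its cocircuits equals the tropical prevariety of its circuits; extending to arbitrary $\mu$ is routine via the same stratification.

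The main obstacle, and the new content, is \ref{thm2:pt3} $\Leftrightarrow$ \ref{thm2:pt1}. A vector $\overline\bu$ selects a face $\Delta_{\mu^*}^{\overline\bu}$ of the $\mu^*$-subdivision of the base polytope; I would show that this face is a coloopless base polytope precisely when $\overline\bu$ lies in one of the strata $\trop(\mu/_S)\times\{\infty\}^S$. The combinatorial dictionary I would set up identifies coloops of the matroid on $\Delta_{\mu^*}^{\overline\bu}$ with coordinates that behave as ``free directions'' relative to $\overline\bu$, and translates the absence of coloops into the cocircuit-span condition in \ref{thm2:pt4} via duality of matroid polytope faces. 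I expect this translation---carefully handling infinite tropical coordinates and the passage between $\mu$ and $\mu^*$---to be the most delicate step; the loopless baseline case can likely be extracted from Speyer's dual complex description, but the behaviour at the boundary divisors where coordinates equal $\infty$ requires genuinely new bookkeeping.
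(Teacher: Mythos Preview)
Your proposal has a genuine gap and a misstated fact that together undermine the \ref{thm2:pt1}$\Leftrightarrow$\ref{thm2:pt2} step.

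First, the ``standard fact'' you invoke is wrong: the valuated circuits of the \emph{contraction} $\mu/_S$ are not the circuits of $\mu$ whose support avoids $S$ --- that describes the circuits of the \emph{deletion} $\mu\setminus_S=\mu|_{[n]\setminus S}$. The correct description (the paper's Theorem~3.1.8) is that $\cc(\mu/_S)$ consists of the elements of \emph{minimal support} among all projections $\{(C_i)_{i\in[n]\setminus S}:\mathbf C\in\cc(\mu)\}$. This matters, because with the correct description only one direction of your biconditional is immediate: if $\overline\bu$ (with $\infty$-coordinates $S$) satisfies the circuit relations of $\mu$, then in particular it satisfies them for those circuits whose projection to $[n]\setminus S$ is a circuit of $\mu/_S$, giving \ref{thm2:pt2}$\subseteq$\ref{thm2:pt1}. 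The reverse direction \ref{thm2:pt1}$\subseteq$\ref{thm2:pt2} asks that $u'\in\trop(\mu/_S)$ satisfy the min-twice condition for \emph{every} projection $\mathbf C|_{[n]\setminus S}$, not just the minimal-support ones; this does not follow from the circuit description alone and needs either the vector/covector theory of Murota--Tamura or an indirect route.

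The paper takes that indirect route, and its architecture is different from yours. It cites \ref{thm2:pt2}$=$\ref{thm2:pt4} from Murota--Tamura, and its main technical contribution is a clean proof of \ref{thm2:pt1}$=$\ref{thm2:pt3} via a general structural result about M-convex functions: the closure in $\PP(\TT^{[n]})$ of the dual complex $\Sigma_\mu$, restricted to each boundary stratum $T_S$, is exactly $\Sigma_{\mu|_S}$ (Theorem~2.3.4, using the Hopf-monoid factorization $\mu^{\be_{[n]\setminus S}}=\mu|_S\times\mu/_S$). This replaces your ``combinatorial dictionary'' and ``bookkeeping at boundary divisors'' with a single lemma applicable to any M-convex function. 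The circle is then closed by \ref{thm2:pt3}$\subseteq$\ref{thm2:pt5}$\subseteq$\ref{thm2:pt2}$\subseteq$\ref{thm2:pt1}: the first inclusion constructs an explicit path in $\trop(\mu)$ approaching a given boundary point, the second uses only that prevarieties are closed, and the third is the easy circuit direction above. Note in particular that the paper never argues \ref{thm2:pt1}$\subseteq$\ref{thm2:pt2} directly; that inclusion comes out of the chain through \ref{thm2:pt3} and \ref{thm2:pt5}. Your appeal to a ``standard tropical compactification argument'' for \ref{thm2:pt1}$\Leftrightarrow$\ref{thm2:pt5} is also close to circular: the statement that the boundary strata of the closure of $\trop(\mu/_\ell)$ are the tropical linear spaces of further contractions is essentially the loopless case of \ref{thm2:pt1}$=$\ref{thm2:pt5}, which is part of what is being proved.
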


\medskip
We apply \Cref{thm:main} to establish a relation between Dressians and flag Dressians, and deduce a realizability result for valuated flag matroids.  First, let us recall that the \textbf{Dressian} $Dr(r;n)$ is defined as the union of $Dr(M)$ over all matroids $M$ of rank $r$ on $[n]$.  We define the \textbf{flag Dressian} $FlDr(r_1, \ldots, r_k;n)$ as the union of $FlDr(\MM)$ over all flag matroids $\MM$ of rank $(r_1, \ldots, r_k)$ on $[n]$.

\newtheorem*{thm:fibration}{\Cref{thm:fibration} \& \Cref{thm:Fl=FlDr}}
\begin{thm:fibration}
The natural isomorphism $\RR^{\binom{[n+1]}{r+1}} \overset\sim\to \RR^{\binom{[n]}{r}} \times \RR^{\binom{[n]}{r+1}}$ induces a surjective map from a subset of $Dr(r+1;n+1)$ to $FlDr(r,r+1;n)$, whose fiber over each point is isomorphic to $\RR$.
As a consequence, every valuated flag matroid on a ground set of size $\leq 5$ is realizable; the tropicalization of a flag variety $Fl(r_1, \ldots, r_k;n)$ coincides with the flag Dressian $FlDr(r_1, \ldots, r_k;n)$ whenever $n\leq 5$.
\end{thm:fibration}

The tropicalization of a flag variety may differ from  the flag Dressian when $n\geq 6$.  See \Cref{eg:m4u26}.
%
%
%
%


\subsection{Previous works}
\label{previousworks}
In the unpublished manuscript \cite{Haq12}, the author established \ref{thm:pt-tropIP}$\iff$\ref{thm:pt-troplin} in \Cref{thm:main} for loopless matroids.\footnote{We also note an error in the proof of Proposition 3 of \cite{Haq12}: there can be many more sets $I$ satisfying $T\cap S \subseteq I \subseteq T \cup S$ than are considered. This nullifies his Lemma 2 and Theorem 2 on the convex hull of the base polytopes of a matroid quotient.}
In \cite[\S4.3]{MS15}, the flag Dressian $FlDr(1,r;n)$ appeared implicitly as the universal family over $Dr(r;n)$.
In \cite{BLMM17}, the authors computed the tropicalizations of the full flag varieties $Fl(1,2,3;4)$ and $Fl(1,2,3,4;5)$ in order to compute toric degenerations.  In a related work \cite{FFFM19}, the authors identified some distinguished maximal cones in the tropicalizations of full flag varieties to study PBW-degenerations.
In \cite[\S5]{FM16} and \cite[\S6]{FM19}, in order to describe the parameter space of matroids over valuation rings, the authors studied the space of valuated flag matroids $(\mu_1, \mu_2)$ of ranks $(r,r+1)$ given a fixed valuated matroid $\mu_2$.
In \cite{JMRS20}, the authors studied the same space as tropicalized Fano schemes under the assumption that $\mu_2$ is realizable.

\subsection{Organization}
\label{organization}
In \S\ref{section:projtrop}, we review projective tropical geometry,  dual complexes, and M-convex functions.  In \S\ref{section:projtroplin}, we review Dressians of matroids and prove \Cref{thm:main2}.  In \S\ref{section:morph}, after a review of flag matroids, we introduce flag Dressians and prove \Cref{thm:main}.  In \S\ref{sec:realizability}, we apply \Cref{thm:main} to relate Dressians and flag Dressians, and obtain a realizability result for valuated flag matroids.

\subsection{Notation}
\label{notations}
For a finite set $E$, we write $\{\be_i \mid i\in E\}$ for the standard basis of $\RR^{E}$, and denote $\be_S := \sum_{i\in S} \be_i$ for subsets $S\subseteq E$.
All-one-vectors $(1,1,\ldots, 1)$ in appropriate coordinate spaces are denoted $\mathbf 1$.
Let $\langle \cdot, \cdot \rangle$ be the standard inner product.  We will follow the "$\min$" convention for all polyhedral operations, such as taking faces and coherent subdivisions.  Likewise, the tropical semifield $\TT = \RR \cup \{\infty\}$ is the min-plus algebra, with operations $a\odot b := a+b$ and $a\oplus b := \min\{a,b\}$.  The topology on $\TT$ is the standard one that makes $\TT$ homeomorphic to $(-\infty,0]$.  The field $\kk$ is algebraically closed, with a (possibly trivial) valuation $\operatorname{val}:\kk\to \TT$.  Denote $[n] = \{1,\ldots, n\}$.  For $0\leq r \leq n$, the set of $r$-subsets of $[n]$ is denoted $\binom{[n]}{r}$.

\section{Preliminaries 
}
\label{section:projtrop}

In \S\ref{subsec:projtropgeo}, we review tropical projective spaces and their products, since these are the ambient spaces of Dressians, flag Dressians, and projective tropical linear spaces.
In \S\ref{subsec:mixedsub}, we review point configurations, dual complexes, and mixed subdivisions, since we will need these notions to study mixed subdivisions of base polytopes of flag matroids in \S\ref{subsec:flagmatsubdiv}.
Our novel contribution here is \Cref{thm:mixed} concerning coherence of mixed subdivisions.
In \S\ref{subsec:mconvex}, we review M-convex functions because the structure of their dual complexes will play a central role in the proof of \Cref{thm:main2} and \Cref{thm:troplinsubdiv}.
 \Cref{thm:Mconvexdualcplx} explicitly describes the closures of their dual complexes inside tropical projective spaces. 
Let $E$ be a finite set throughout.

\subsection{Projective tropical geometry}
\label{subsec:projtropgeo}

We review projective tropical geometry, and explain the underlying algebraic geometry in the remarks.  See \cite[Chapter 6]{MS15} for a detailed treatment.

\begin{defn}
Let $E = [n]$. The \textbf{tropical projective space} $\PP(\TT^E)$ is 
\[
\begin{split}
\PP(\TT^E) :=& \big(\TT^E \setminus \{(\infty, \ldots, \infty)\} \big) / \RR\mathbf 1\\
=& \{ \bu \in \TT^E \mid \bu \neq (\infty, \ldots, \infty)\}/ \sim, \ \ \textnormal{where $\bu\sim \bu'$ if $\bu' = \bu+ c\mathbf 1$ for some $c\in \RR$.}
\end{split}
\]
\end{defn}

For $\bu = (u_i)_{i\in E}$ in $\RR^E$ or $\TT^E$, write $\overline \bu$ for its image in $\RR^E/\RR\mathbf 1$ or $\PP(\TT^E)$.  The \textbf{support} of $\bu$ is $\operatorname{supp}(\bu) := \{i\in E \mid u_i \neq \infty\}$.  For a nonempty subset $S\subseteq E$, denote by
\[
T_S := \{\overline \bu \in \PP(\TT^E) \mid \operatorname{supp}(\bu) = S\},
\]
the image of $\RR^{S}\times \{\infty\}^{E\setminus S}$ in $\PP(\TT^E)$. The set $T_E = \RR^E/\RR\mathbf 1$ is the \emph{tropical projective torus}. By abuse of notation, we often identify $\RR^S/\RR\mathbf 1$ with $T_S$, and $\PP(\TT^{S})$ with the closure of $T_S$ in $\PP(\TT^E)$.  The subsets $\{T_S\}_{\emptyset \subsetneq S\subseteq E}$ partition $\PP(\TT^E)$.

\begin{rem}
The space $\PP(\TT^E)$ is the \textbf{tropicalization} of the projective space $\PP(\kk^E)$.  The projective space $\PP(\kk^E)$ is a toric variety with the projective torus $(\kk^*)^E/\kk^*$.  For each nonempty subset $S\subseteq E$, the torus orbit $O_S := \big((\kk^*)^{S}\times \{0\}^{E\setminus S}\big) /\kk^*$ in $\PP(\kk^E)$ tropicalizes to be the stratum $T_S$ of $\PP(\TT^E)$.  We often identify $(\kk^*)^S/\kk^*$ with $O_S$, and $\PP(\kk^S)$ with the closure $\overline{O_S} = \{\mathbf y\in \PP(\kk^E) \mid y_i = 0 \textnormal{ if } i\notin S\}$.  The orbits $\{O_S\}_{\emptyset\subsetneq S \subseteq E}$ partition the space $\PP(\kk^E)$.  See \cite[\S6.2]{MS15} or \cite[\S3.2]{MR} for tropicalizations of toric varieties in general.
\end{rem}

Let $\mathcal A$ be a finite subset of $\ZZ_{\geq 0}^E$.  A \textbf{tropical polynomial} $F$ with \textbf{support} $\operatorname{supp}(F) = \mathcal A$ is 
\[
F = \bigoplus_{\bv \in \mathcal A} c_\bv \odot \mathbf x^{\bigodot \bv}.
\]
It represents the function $\TT^E\to \TT, \ (x_i)_{i\in E} \mapsto \min_{\bv \in \mathcal A}\{ c_\bv + \sum_{i\in E} v_i \cdot x_i\}$.  Here, by convention  $0\odot \infty = 0$ and $a\odot \infty = \infty$ if $a \neq 0$.  We always assume that a tropical polynomial $F$ is homogeneous; that is, there exists $d\in \ZZ_{\geq 0}$ such that $d = \sum_{i\in E} v_i$ for all $\bv \in \operatorname{supp}(F)$.

\begin{defn}
Let $F$ be a tropical polynomial with support in $\ZZ_{\geq 0}^E$.  We define the \textbf{projective tropical hypersurface of $F$} to be
\[
\overline\trop(F) := \left\{\overline{\bu} \in \PP(\TT^E) \ \middle | \textnormal{ the minimum in } \Big\{ c_\bv + \sum_{i\in E} v_i \cdot u_i \Big\}_{\bv\in \operatorname{supp}(F)} \textnormal{ is achieved at least twice}\right\}.
\]
When $\{ c_\bv + \sum_{i\in E} v_i \cdot u_i\}_{\bv\in \operatorname{supp}(F)} = \{\infty\}$, by convention the minimum in $\{c_\bv + \sum_{i\in E} v_i \cdot u_i\}_{\bv\in \operatorname{supp}(F)}$ is said to be achieved at least twice even if $\operatorname{supp}(F)$ is a single element.
The set $\overline\trop(F)$ is well-defined in $\PP(\TT^E)$ because one may pass from $\TT^E$ to $\PP(\TT^E)$ by the homogeneity of $F$. 
\end{defn}

\begin{eg}
\label{ex:trophyp}
Let $F = x_0 \odot x_1 \oplus x_0 \odot x_2 \oplus x_1 \odot x_3 \oplus x_2 \odot x_3  =  \min(x_0 + x_1, x_0 + x_2, x_1 + x_3, x_2 + x_3)$. Then the projective tropical hypersurface $\overline{\trop}(F) \subset \mathbb{P}(\mathbb{T}^{\{0,1,2,3\}})$ is as pictured in Figure \ref{fig:trophypex}.
\begin{figure}[h]
    \centering
    \includegraphics[height = 1.4 in]{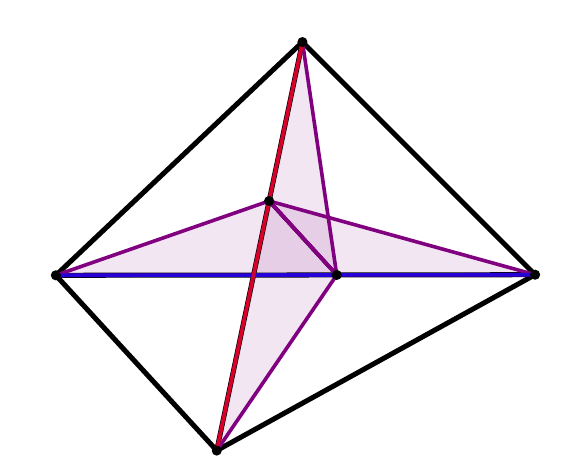}
    \caption{The projective tropical hypersurface from Example \ref{ex:trophyp}. The red line is where $x_0=x_3=\infty$ and the blue line is where $x_1=x_2=\infty$.}
    \label{fig:trophypex}
\end{figure}
\end{eg}

Suppose $F$ is multi-homogeneous; that is, there is a partition $E = \bigsqcup_{j \in J} E_j$ and integers $\{d_j\}_{j\in J}$ such that $d_j = \sum_{i\in E_j} v_i$ for all $j\in J$ and $\bv \in \operatorname{supp}(F)$.  Then the \textbf{multi-projective tropical hypersurface} of $F$ is defined analogously as a subset of $\prod_{j\in J} \PP(\TT^{E_j})$.

\begin{defn}
If $F_1, \ldots, F_l$ are tropical polynomials with supports in $\ZZ_{\geq 0}^E$, we define their \textbf{projective tropical prevariety} to be
\[
\overline\trop(F_1, \ldots, F_l) := \bigcap_{i=1}^l \overline\trop(F_i) \subset \PP(\TT^E).
\]
If there is a common partition $S = \bigsqcup_{j\in J} E_j$ such that each $F_i$ is multi-homogeneous in $S$, the \textbf{multi-projective tropical prevariety} is defined analogously as a subset of $\prod_{j\in J}\PP(\TT^{E_j})$.
Multi-projective tropical prevarieties are closed.
\end{defn}
In \S\ref{section:projtroplin}, Dressians and projective tropical linear spaces are defined as projective tropical prevarieties in $\PP\big(\TT^{\binom{[n]}{r}}\big)$ and $\PP(\TT^{[n]})$, respectively.  In \S \ref{section:morph}, flag Dressians will be defined as multi-projective tropical prevarieties in $\PP\big(\TT^{\binom{[n]}{r_1}}\big)\times \cdots \times \PP\big(\TT^{\binom{[n]}{r_k}}\big)$.

\begin{rem}\label{rem:usualtrop}
The intersection $\overline\trop(F) \cap T_E \subset \RR^E/\RR\mathbf 1$ is the usual \textbf{tropical hypersurface} of a tropical polynomial $F$, and is denoted $\trop(F)$.  More generally, for a nonempty subset $S\subseteq E$, consider the intersection $\overline\trop(F) \cap T_S$ as a subset of $\RR^{S}/\RR\mathbf 1$.  Then it is equal to $\trop(F_S)$, where $F_S$ is the tropical polynomial obtained from $F$ by keeping only the terms with exponent supports in $\ZZ_{\geq 0}^{S}$. 
The set $\overline\trop(F)$ is the closure of $\trop(F)$ in $\PP(\TT^E)$ when $F$ has no nontrivial monomial factors, i.e.\ there is no $\mathbf 0 \neq \bv' \in \ZZ_{\geq 0}^E$ such that $\bv - \bv' \in \ZZ_{\geq 0}^E$ for all $\bv\in \operatorname{supp}(F)$.  
\end{rem}

We now give the underlying algebraic geometry. See \cite[\S6.2]{MS15} for proofs of statements.

\begin{rem}\label{rem:projtrophyper}
Let $\operatorname{val}: \kk \to \TT$ be a (possibly trivial) valuation on $\kk$.  Let
$Y = V(f) \subset \PP(\kk^E)$ be a projective subvariety defined by a homogeneous polynomial
\[
f = \sum_{\bv \in \ZZ_{\geq 0}^E} c_\bv \mathbf x^{\bv} \in \kk[x_i \mid i\in E] \qquad\textnormal{(all but finitely many $c_\bv$ are zero)}.
\]
The \textbf{projective tropicalization} of $Y$, denoted $\overline\trop(Y)$, is the set $\overline\trop(f^{\trop})$ where
\[
f^{\trop} = \bigoplus_{\bv\in \operatorname{supp}(f)}\operatorname{val}(c_\bv) \odot \mathbf x^{\bigodot \bv}.
\]
If $f = 0$, then $\overline\trop(Y) = \PP(\TT^E)$.  Recall the notation $O_S = \big((\kk^*)^{S}\times \{0\}^S\big) /\kk^*$ for a nonempty subset $S\subseteq E$.  For $\mathring Y := Y \cap O_E$ a subvariety of the projective torus $(\kk^*)^{E}/\kk^*$, 
the usual tropical hypersurface $\trop(f^{\trop}) \subset \RR^S/\RR\mathbf 1$ is the usual tropicalization of $\mathring Y$, denoted $\trop(\mathring Y)$.  More generally, consider $\mathring Y_S := Y\cap O_S$, regarded as a subvariety in $(\kk^*)^{S}/\kk^*$.  Then $\trop(\mathring Y_S)$ is equal to $\overline\trop(Y) \cap T_S$, regarded as a subset of $\RR^S/\RR\mathbf 1$.
The set $\overline\trop(Y)$ is the closure of $\trop(\mathring Y)$ in $\PP(\TT^E)$ when $Y$ is the closure of $\mathring Y$ in $\PP(\kk^E)$.
\end{rem}


\begin{rem}\label{rem:projtrop}
Suppose now that $Y\subset \PP(\kk^E)$ is a projective subvariety defined by a homogeneous ideal $I\subset \kk[x_i \mid i\in S \subset E]$.  The projective tropicalization of $Y$ is defined as
\[
\overline\trop(Y) := \bigcap_{f\in I} \overline\trop(f^{\trop}),
\]
which is a finite intersection for a suitable choice of generators of $I$, and hence $\overline\trop(Y)$ is a projective tropical prevariety. As in the hypersurface case (\Cref{rem:projtrophyper}), the usual tropicalization of $\mathring Y = Y \cap O_E$ is $\trop(\mathring Y) := \bigcap_{f\in I} \trop(f^{\trop})$.  For a nonempty subset $S \subseteq E$, we have $\overline\trop(Y) \cap T_S = \trop(\mathring Y_S)$ where $\mathring Y_S := Y\cap O_S$.  The set $\overline\trop(Y)$ is the closure of $\trop(\mathring Y)$ when $Y$ the closure of $\mathring Y$ in $\PP(\kk^S)$.  If $I$ is principal, generated by $f$, then $\overline{\trop}(Y) = \overline{\trop}(f^{\trop})$, but in general, the set $\overline\trop(Y)$ may not equal $\bigcap_{i=1}^l \overline\trop(f_i^{\trop})$ for an arbitrary generating set $\{f_1, \ldots, f_l\}$ of $I$.
\end{rem}

\subsection{Point configurations, dual complexes, and mixed subdivisions}
\label{subsec:mixedsub}

We review point configurations, dual complexes of their coherent subdivisions, and mixed subdivisions. Point configurations, which generalize the notion of subsets of points, are necessary for discussing mixed subdivisions.  See \cite{DLRS10} for a detailed treatment.  Our novel contribution here is \Cref{thm:mixed} concerning mixed coherent subdivisions.


\begin{defn}
Let $\mathcal A$ be a finite index set.  A \textbf{point configuration} $(\mathcal A, \mathbf a)$ \textbf{in} $\RR^E$ is a map $\mathbf a_{(\cdot)}: \mathcal  A \to \RR^{E}$.  In other words, it is a finite set of points $\{\mathbf a_i \in \RR^E: i\in \mathcal A\}$ labeled by the set $\mathcal A$, where some points may have multiple labels.
\end{defn}

We often abbreviate $(\mathcal A, \mathbf a)$ to $\mathcal A$ when the map $\mathbf a$ is understood.  For $A\subset \RR^E$ a finite subset, we write $A$ also for the point configuration $(A, s\mapsto s)$.  
For $Q$ a lattice polytope in $\RR^E$, we write $Q$ for the point configuration of its lattice points.  We write $\operatorname{Conv}(\mathcal A)$ for the polytope $\operatorname{Conv}(\mathbf a_i \mid i\in \mathcal A) \subset \RR^E$.

\proof[Assumption] The point configuration $\mathcal A$ is always integral, i.e.\ the image $\{\mathbf a_i\}_{i\in \mathcal A}$ lies in $\ZZ^E$, and it is homogeneous, i.e.\ there exists $d \in \ZZ$ such that $d = \langle \be_E, \mathbf a_i\rangle$ for all $i\in \mathcal A$, where $e_E = \sum_{i\in E} e_i$.

\smallskip
For a point configuration $(\mathcal A, \mathbf a)$, a subset $\mathcal A' \subset \mathcal A$ defines a subconfiguration $(\mathcal A', \mathbf a|_{\mathcal A'})$. In particular, a vector $\overline\bu \in \RR^E/\RR\mathbf 1$ defines a subconfiguration $\mathcal A^{\overline \bu}$ by 
\[
 \mathcal A^{\overline \bu}:= \left\{i\in \mathcal A \mid \langle \bu, \mathbf a_i \rangle = \min_{j\in \mathcal A}\langle \bu, \mathbf a_j\rangle \right\}.
\]
This does not depend on the choice of the representative $\bu$ of $\overline \bu$ because $\mathcal A$ is homogeneous.
A subconfiguration $\mathcal F \subset \mathcal A$ arising in this way 
is called a \textbf{face} of $\mathcal A$, denoted $\mathcal F \leq \mathcal A$.  

\begin{defn}
A collection $\Delta$ of subconfigurations of $\mathcal A$ is a \textbf{subdivision of $\mathcal A$} if
\begin{enumerate}
    \item for all $\mathcal F \in \Delta$ and $\mathcal F' \leq \mathcal F$, one has $\mathcal F'\in \Delta$, and
    \item the set of polytopes $\{\operatorname{Conv}(\mathcal F)\}_{\mathcal F \in \Delta}$ forms a polyhedral subdivision of $\operatorname{Conv}(\mathcal A)$.  So, we have $\bigcup \{\operatorname{Conv}(\mathcal F)\}_{\mathcal F \in \Delta} =\operatorname{Conv}(\mathcal A)$, and for any $\mathcal F_1 \neq \mathcal F_2 \in \Delta$, the intersection $\operatorname{Conv}({\mathcal F_1}) \cap \operatorname{Conv}({\mathcal F_2})$ of $\operatorname{Conv}({\mathcal F_1})$ and $\operatorname{Conv}({\mathcal F_2})$ is a proper face of each.
\end{enumerate}
The elements $\mathcal F\in \Delta$ are called the \textbf{faces of $\Delta$}.  A subdivision of $\mathcal A$ is \textbf{tight} if every $i\in \mathcal A$ is in some face of the subdivision.
\end{defn}

We will study subdivisions of $\mathcal A$ induced by weights.
A \textbf{weight} on a point configuration $(\mathcal A,\mathbf a)$ is a function $w: \mathcal A \to \RR$.  Like point configurations, we write $w^{\overline\bu}$ for the restriction $w|_{\mathcal A^{\overline\bu}}$ for $\overline\bu\in \RR^E/\RR\mathbf 1$.  We set the following notations for the subdivision induced by a weighted point configuration $w$.

\begin{notation}
\label{notation:weights}
\hfill
\begin{itemize}
\item $\Delta_w$ is the \textbf{coherent subdivision} of $\mathcal A$, consisting of the lower faces of the point configuration $\Gamma_w(\mathcal A) := (\mathcal A, (\mathbf a, \nu))$ where $(\mathbf a, w): i \mapsto (\mathbf a_i, w(i))\in \RR^E\times \RR$ for $i\in \mathcal A$.
\item $\Delta^{\overline\bu}_w$ is the face of the coherent subdivision $\Delta_w$ corresponding to ${\overline\bu} \in \RR^E/\RR\mathbf 1$, defined by
\[
\Delta_w^{\overline\bu} := \Gamma_w(\mathcal A)^{({\bu},1)} = \left \{i\in \mathcal A\  \middle  | \   \langle {\bu}, \mathbf a_i \rangle +w(i) = \min_{j\in \mathcal A}\Big( \langle {\bu},\mathbf a_j \rangle +w(j)  \Big) \right\}.
\]
\item $\Sigma_w$ is the \textbf{dual complex} in $\RR^E/\RR\mathbf 1$ of the coherent subdivision $\Delta_w$.  It is a polyhedral complex consisting of polyhedra corresponding to faces of $\Delta_w$ by
\begin{equation}\label{eqn:dualcplx}\tag{\textdagger}
\{ {\overline\bu} \in \RR^E/\RR\mathbf 1 \mid \Delta_w^{\overline\bu} \geq \mathcal F\} \longleftrightarrow \mathcal F \in \Delta_w.
\end{equation}
The relative interiors $\{{\overline\bu} \in \RR^E/\RR\mathbf 1 \mid \Delta_w^{\overline\bu} = \mathcal F\}$ as $\mathcal F$ ranges over all faces of $\Delta_w$ partition $\RR^E/\RR\mathbf 1$.  We call the relative interiors the \textbf{cells} of the polyhedral complex $\Sigma_w$. 
\end{itemize}
\end{notation}

We note a useful observation.

\begin{lem}\label{lem:linearshift}
Let $w$ be a weight on a point configuration $(\mathcal A,\mathbf a)$ in $\RR^E$, and ${\bu} \in \RR^E$.  Consider a new weight defined by $i \mapsto w(i) + \langle \bu , \mathbf a_{i}\rangle$ for $i\in \mathcal A$.  Then $\Delta_w^{\overline\bu} = \Delta_{w(\cdot) + \langle {\bu},  \mathbf a_{(\cdot)}\rangle}^{\mathbf 0}$.
\end{lem}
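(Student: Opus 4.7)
The plan is to verify the equality directly from the definitions in Notation 2.3.4, since both sides unpack to the same explicit subset of $\mathcal A$. There is essentially no obstacle here: the lemma is a bookkeeping identity rephrasing the fact that the affine functional $\langle \bu, \mathbf a_{(\cdot)}\rangle + w(\cdot)$ on $\mathcal A$ can be read either as "evaluate the tilted heights $w$ at the direction $\bu$" or as "evaluate the shifted heights $w + \langle \bu, \mathbf a_{(\cdot)}\rangle$ at the direction $\mathbf 0$."

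Concretely, I would write out the right-hand side:
\[
\Delta^{\mathbf 0}_{w(\cdot) + \langle \bu, \mathbf a_{(\cdot)}\rangle} = \Bigl\{ i \in \mathcal A \ \Big|\ \langle \mathbf 0, \mathbf a_i\rangle + \bigl(w(i) + \langle \bu, \mathbf a_i \rangle\bigr) = \min_{j \in \mathcal A}\Bigl( \langle \mathbf 0, \mathbf a_j\rangle + w(j) + \langle \bu, \mathbf a_j \rangle \Bigr) \Bigr\},
\]
cancel the zero inner products, and observe that the resulting set coincides with
\[
\Delta^{\overline\bu}_w = \Bigl\{ i \in \mathcal A \ \Big|\ \langle \bu, \mathbf a_i\rangle + w(i) = \min_{j\in \mathcal A}\bigl( \langle \bu, \mathbf a_j\rangle + w(j)\bigr)\Bigr\}.
\]

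One small thing worth flagging in the write-up is that the right-hand side is indexed by $\mathbf 0 \in \RR^E/\RR\mathbf 1$, so we should note that the formula is independent of the lift used for $\overline\bu$; this is guaranteed by the homogeneity assumption on $\mathcal A$, which ensures that shifting $\bu$ by a multiple of $\mathbf 1$ merely shifts every quantity $\langle \bu, \mathbf a_i\rangle$ by the same constant and hence does not change which indices achieve the minimum. After this remark the proof reduces to the two-line calculation above.
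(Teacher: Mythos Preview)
Your proposal is correct: the lemma is an immediate unpacking of the definition of $\Delta_w^{\overline\bu}$, and the paper itself offers no proof beyond calling it a ``useful observation.'' Your direct verification, together with the remark on independence of the lift of $\overline\bu$ via homogeneity, is exactly the intended one-line check.
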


In \Cref{cor:Mconvexdualcplx}, we will extend the correspondence \eqref{eqn:dualcplx} to a correspondence between points of $\PP(\TT^E)$ and projections of faces of $\Delta_w$ for a particular family of weight configurations $w$.  For now, we discuss mixed subdivisions of Minkowski sums,   because Minkowski sums of base polytopes of matroids and their mixed subdivisions are the focus of \S\ref{subsec:flagmatsubdiv}.

\begin{defn}
\label{def:mink_sum_weights}
Let $(\mathcal A_1, \mathbf {a_1}), \ldots, (\mathcal A_k, \mathbf {a_k})$ be point configurations in $\RR^E$.  Their \textbf{Minkowski sum}, denoted $\sum_{i=1}^k \mathcal A_i$, is a point configuration $(\mathcal A_1 \times \cdots \times \mathcal A_k, \sum_i \mathbf{a_i})$ defined by
\[
\textstyle \sum_i \mathbf{a_i}: \displaystyle (j_1, \ldots, j_k) \mapsto \sum_{i=1}^k \mathbf{a_i}_{j_i} \quad \textnormal{ for } (j_1, \ldots, j_k) \in \mathcal A_1 \times \cdots \times \mathcal A_k.
\]
If $w_1, \ldots, w_k$ are weights on $\mathcal A_1, \ldots, \mathcal A_k$ (respectively), then their Minkowski sum $\sum_{i} w_i$ is a weight on $\sum_i \mathcal A_i$ defined by $(j_1, \ldots, j_k) \mapsto \sum_{i=1}^k w_i(j_i)$.
\end{defn}

We will repeatedly make use of the following observation.

\begin{lem}\label{lem:Minkowskifaces}
Let $w = \sum_{i=1}^k w_i$ be a Minkowski sum of weight point configurations.  Then for $\overline \bu \in \RR^S/\RR\mathbf 1$, we have $\Delta_w^{\overline\bu} = \sum_{i=1}^k \Delta_{w_i}^{\overline\bu}$.
\end{lem}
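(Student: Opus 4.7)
The plan is to unwind both sides using the definition of $\Delta_w^{\overline\bu}$ from Notation \ref{notation:weights} and to observe that minimizing a separable sum over a product set decomposes into a sum of minimizations over each factor.

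First I would evaluate the linear functional appearing in the definition of $\Delta_w^{\overline\bu}$ on an arbitrary index $(j_1,\ldots,j_k) \in \mathcal A_1\times \cdots \times \mathcal A_k$ of the Minkowski sum. By the definitions in \Cref{def:mink_sum_weights},
\[
\langle \bu, \textstyle\sum_i \mathbf{a_i}_{j_i}\rangle + \sum_i w_i(j_i) \;=\; \sum_{i=1}^k \Bigl(\langle \bu, \mathbf{a_i}_{j_i}\rangle + w_i(j_i)\Bigr),
\]
so the functional whose argmin defines $\Delta_w^{\overline\bu}$ is separable across the factors $\mathcal A_1,\ldots,\mathcal A_k$.

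Next I would invoke the elementary fact that the minimum of a sum of independent functions equals the sum of the minima, and that the argmin of such a sum is exactly the product of the argmins. Applied to our situation, this gives
\[
\min_{(j_1,\ldots,j_k)} \sum_{i=1}^k \Bigl(\langle \bu, \mathbf{a_i}_{j_i}\rangle + w_i(j_i)\Bigr) \;=\; \sum_{i=1}^k \min_{j_i \in \mathcal A_i}\Bigl(\langle \bu, \mathbf{a_i}_{j_i}\rangle + w_i(j_i)\Bigr),
\]
and an index $(j_1,\ldots,j_k)$ attains the left-hand minimum if and only if each $j_i$ attains the $i$-th summand's minimum, i.e.\ $j_i \in \Delta_{w_i}^{\overline\bu}$ for every $i$. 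By definition of the Minkowski sum of point configurations this is precisely the condition that $(j_1,\ldots,j_k) \in \sum_{i=1}^k \Delta_{w_i}^{\overline\bu}$, proving the equality.

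Finally, a sentence on well-definedness in $\overline\bu$: by the standing homogeneity assumption on each $\mathcal A_i$, replacing $\bu$ by $\bu + c\mathbf 1$ shifts $\langle \bu, \mathbf{a_i}_{j_i}\rangle$ by a constant $c\,d_i$ independent of $j_i$, so both the left- and right-hand argmin sets are independent of the lift of $\overline\bu$. There is no real obstacle here; the statement is essentially a bookkeeping check that the Minkowski structures on point configurations, on weights, and on argmin-faces are mutually compatible.
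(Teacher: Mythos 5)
Your proof is correct and fills in exactly the argument the paper omits: the lemma is stated there without proof as an "observation," and your reduction to the separability of $\langle \bu, \cdot\rangle + w(\cdot)$ across the product $\mathcal A_1 \times \cdots \times \mathcal A_k$, together with the elementary fact that the argmin of a separable sum is the product of the argmins of the summands, is the natural (and essentially unique) route. The concluding remark about well-definedness under the lift $\bu \mapsto \bu + c\mathbf 1$ is a nice touch, correctly invoking the standing homogeneity assumption, though the paper already addresses this once and for all when defining $\mathcal A^{\overline\bu}$ for any homogeneous point configuration.
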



\begin{defn}
A subdivision $\Delta$ of a Minkowski sum $\sum_i \mathcal A_i$ is \textbf{mixed} if there exist subdivisions $\Delta_1, \ldots, \Delta_k$ of $\mathcal A_1, \ldots, \mathcal A_k$ (respectively) such that each face $\mathcal F \in \Delta$ is a Minkowski sum $\sum_{i=1}^k \mathcal F_i$ of faces $\mathcal F_i$ of $\Delta_i$.  If there exist weights $w_i: \mathcal A_i \to \RR$ such that their Minkowski sum $w := \sum_i w_i$ satisfies $\Delta_w = \Delta$, we say that $\Delta$ is a \textbf{mixed coherent subdivision}, which is mixed by \Cref{lem:Minkowskifaces}.
\end{defn}

A priori, the terminology "mixed coherent subdivision" can be ambiguous:  if a weight $w$ on $\sum_i \mathcal A_i$ induces a coherent subdivision that is mixed, is $w$ necessarily a Minkowski sum of weights?  In general, the answer is no, as displayed in the following example.

\begin{eg}
\label{ex:hexagon}
Let $A = \{(1,0,0),(0,1,0),(0,0,1)\}$ and $B = \{(0,1,1),(1,0,1),(1,1,0)\}$ be two point configurations in $\mathbb{R}^3$. Their Minkowski sum $A+B$ is labeled by the nine elements of $A \times B$ and is the collection of points $\{(2,0,1),(2,1,0),(1,2,0),(0,2,1),(0,1,2),(1,0,2),(1,1,1)\}$. The first six points have unique labels, and the last point has three labels, because it arises in three ways: $(0,0,1)+(1,1,0)=(1,0,0)+(0,1,1)=(0,1,0)+(1,0,1)$. This is shown in Figure \ref{fig:hexagon}.
\begin{figure}[h]
    \centering
    \includegraphics[height = 1.5 in]{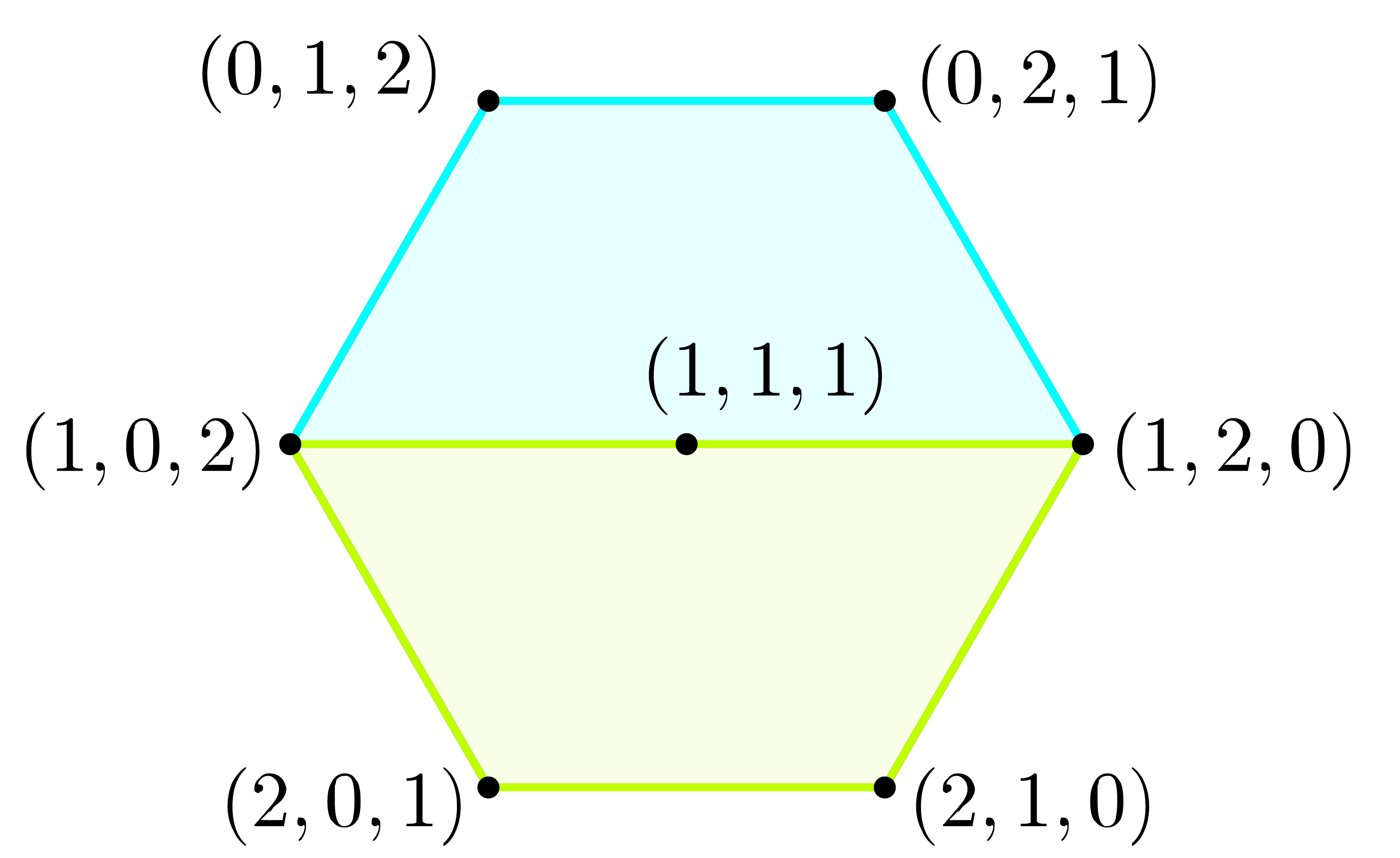}
    \caption{The point configuration in Example \ref{ex:hexagon}}
    \label{fig:hexagon}
\end{figure}

Consider the following two weight vectors.
$$
\begin{array}{cccccccccc}
&201&210&120&021&012&102&001+110&100+011&010+101\\
w_1 & 0&0&0&1&1&0&0&0&17\\
w_2 & 0&0&0&1&1&0&0&1&0\\
\end{array}
$$
Both $w_1$ and $w_2$ induce the subdivision indicated in Figure \ref{fig:hexagon}, which is mixed. The first is not a Minkowski sum of weights on $A$ and $B$, while the second is the Minkowski sum of weight vectors $w_A$ and $w_B$ where
\[
w_A : \begin{cases}
(1,0,0) \mapsto 0\\
(0,1,0)\mapsto 0\\
(0,0,1)\mapsto 0
\end{cases} \quad \textnormal{and}\quad 
w_B : \begin{cases}
(0,1,1) \mapsto 1\\
(1,0,1) \mapsto 0\\
(1,1,0) \mapsto 0.
\end{cases}
\]
This example shows that not every weight vector inducing a coherent subdivision that is mixed is a Minkowski sum of weights. However, there does exist a weight vector which is a Minkowski sum inducing the same subdivision.
\end{eg}

We establish the following weaker statement about coherent mixed subdivisions.  Together with \Cref{thm:troplinsubdiv}, it will imply a strengthening of the equivalence \ref{thm:pt-tropIP}$\iff$\ref{thm:pt-subdiv} in  \Cref{thm:main} (\Cref{cor:coherentflagmixed}).
We will only need \Cref{thm:mixed} for the proof of \Cref{cor:coherentflagmixed}.

\begin{thm}\label{thm:mixed}
Let $\mathcal A = \sum_{i=1}^k \mathcal A_i$ be a Minkowski sum of point configurations.  For simplicity, let us assume that if $\dim \operatorname{Conv}(A_i) = 1$ then $|\mathcal A_i| = 2$. Suppose that a weight $w: \mathcal A\to \RR$ induces a coherent subdivision $\Delta_w$ that is mixed.  Then there exist weights $w_1, \ldots, w_k$ on $\mathcal A_1, \ldots, \mathcal A_k$ such that $\Delta_{\sum_{i=1}^k w_i} = \Delta_w$.
\end{thm}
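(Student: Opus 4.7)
The plan is to construct the weights $w_1, \ldots, w_k$ by decomposing the convex piecewise-linear lower envelope function $\phi_w : \operatorname{Conv}(\mathcal A) \to \RR$ of the lifted configuration $\Gamma_w(\mathcal A)$ --- whose domains of linearity project onto the top-dimensional cells of $\Delta_w$ --- as an \emph{infimal convolution} of convex piecewise-linear functions $\phi_i : \operatorname{Conv}(\mathcal A_i) \to \RR$:
\[
\phi_w(\mathbf p) \;=\; \inf\Bigl\{\, \textstyle\sum_{i=1}^k \phi_i(\mathbf p_i) \;\Big|\; \mathbf p = \textstyle\sum_i \mathbf p_i,\; \mathbf p_i \in \operatorname{Conv}(\mathcal A_i) \,\Bigr\}.
\]
This operation corresponds to Minkowski addition of epigraphs. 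Given such $\phi_i$, setting $w_i(j_i) := \phi_i(\mathbf a_{i,j_i})$ yields weights whose Minkowski sum has lower envelope $\phi_w$; then \Cref{lem:Minkowskifaces} produces $\Delta_{\sum_i w_i}^{\overline{\bu}} = \sum_i \Delta_{w_i}^{\overline{\bu}} = \Delta_w^{\overline{\bu}}$ for every $\overline{\bu} \in \RR^E/\RR\mathbf 1$, with the last equality coming from the mixedness of $\Delta_w$.

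To build the $\phi_i$, first invoke the mixedness hypothesis to fix subdivisions $\Delta_1, \ldots, \Delta_k$ of $\mathcal A_1, \ldots, \mathcal A_k$ such that every face $\mathcal F \in \Delta_w$ admits a Minkowski factorization $\mathcal F = \sum_i \mathcal F_i^{(\mathcal F)}$ with $\mathcal F_i^{(\mathcal F)} \in \Delta_i$. On each top-dimensional cell $\mathcal F$, $\phi_w$ restricts to an affine function $\mathbf p \mapsto \langle \bu_\mathcal F, \mathbf p\rangle + c_\mathcal F$, and the Minkowski factorization splits this local affine function as a sum of affine functions on the summands $\operatorname{Conv}(\mathcal F_i^{(\mathcal F)})$, all with gradient $\bu_\mathcal F$ and certain additive constants $c_{i,\mathcal F}$ summing to $c_\mathcal F$. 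I would then glue these local data into globally defined convex piecewise-linear functions $\phi_i$ by traversing a spanning tree of the dual graph of $\Delta_w$: at each dual edge, the summand structure on the shared facet determines how to propagate the additive constants so that the $\phi_i$ agree there. Convexity of each $\phi_i$ follows from convexity of $\phi_w$, since the compatibility of gradients $\bu_\mathcal F$ across adjacent cells of $\Delta_w$ projects to the corresponding compatibility for each $\phi_i$.

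The hard part is checking that the gluing is consistent around \emph{cycles} of the dual graph, so that different paths produce the same $\phi_i$. This cocycle condition reduces to a combinatorial compatibility of the Minkowski decompositions across adjacent top-dimensional cells of $\Delta_w$. The simplifying assumption --- that any $\mathcal A_i$ with $1$-dimensional convex hull consists of exactly two labels --- is precisely what excludes the pathological case of a collinear configuration with interior labels, where a ``long'' edge summand could be distributed among the Minkowski components in multiple inequivalent ways and thereby break the cocycle condition. Once the $\phi_i$ are constructed, the desired identity $\Delta_{\sum_i w_i} = \Delta_w$ follows from \Cref{lem:Minkowskifaces} together with mixedness, as explained in the opening paragraph.
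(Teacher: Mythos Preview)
Your approach via infimal convolution is a genuinely different route from the paper's. The paper works entirely in the dual picture: for each $i$ it defines a coarsening $\Sigma_i$ of the dual complex $\Sigma_w$ by declaring $\overline{\bu} \sim_i \overline{\bu}'$ when their $i$-th Minkowski summands agree, observes that $\operatorname{star}_{\Sigma_i}(\overline{\bu})$ is the normal fan of the lattice polytope $\operatorname{Conv}(\mathcal F_{i,\overline{\bu}})$ and hence that the non-maximal faces of $\Sigma_i$ form a balanced rational codimension-$1$ complex, and then invokes \cite[Proposition~3.3.10]{MS15} to realize $\Sigma_i$ as the dual complex of some weighted configuration. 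That structure theorem is doing exactly the global work that your cocycle check would have to do by hand.

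And that cocycle check is a genuine gap in your outline. You correctly flag it as ``the hard part'' but you do not verify it: when you traverse a cycle in the dual graph of $\Delta_w$, each constant $c_{i,\mathcal F}$ must return to its starting value \emph{individually}, not merely in the sum $\sum_i c_{i,\mathcal F}=c_{\mathcal F}$, and nothing you have written forces this. There is also a more basic problem with your local data: you declare the affine piece of $\phi_i$ on $\operatorname{Conv}(\mathcal F_i^{(\mathcal F)})$ to have gradient $\bu_{\mathcal F}$, but a single maximal cell $\mathcal G\in\Delta_i$ typically arises as $\mathcal F_i^{(\mathcal F)}$ for many maximal $\mathcal F\in\Delta_w$ with \emph{different} $\bu_{\mathcal F}$ (these range over an entire cell of the coarsened complex $\Sigma_i$), so as written you have not specified a single gradient for $\phi_i$ on $\operatorname{Conv}(\mathcal G)$. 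Finally, your reading of the hypothesis $|\mathcal A_i|=2$ when $\dim\operatorname{Conv}(\mathcal A_i)=1$ does not match the paper's use: the paper needs it, via \Cref{lem:connected}, to ensure the maximal cells of $\Delta_i$ are connected through dimension $\geq 1$, so that a dilation factor relating $\Delta_i$ to the auxiliary subdivision $\Delta_{\widetilde w_i}$ is globally constant---not to resolve label ambiguities in a Minkowski decomposition.
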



We prepare with the following observation.

\begin{lem}\label{lem:connected}
Let $Q$ be a $d$-dimensional polytope, and let $\{Q_1, \ldots, Q_m\}$ be the maximal (i.e.\ $d$-dimensional) faces of a polyhedral subdivision of $Q$.  The graph on $[m]$ with edges $(i,j)$ whenever $Q_i \cap Q_j$ has dimension $d-1$ is connected.  In particular, if $d \geq 2$, or if $d=1$ and the subdivision is trivial, then the maximal cells $Q_1, \ldots, Q_m$ are connected through dimension $\geq 1$.
\end{lem}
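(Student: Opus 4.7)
The plan is to prove this by a generic-path argument in $\operatorname{Conv}(Q) = Q$: any two maximal cells can be linked by a line segment that only crosses codimension-one faces, producing a walk in the adjacency graph.

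More precisely, I would fix indices $i \neq j$ in $[m]$ and choose points $p_i \in \operatorname{relint}(Q_i)$ and $p_j \in \operatorname{relint}(Q_j)$. Consider the straight line segment $\gamma$ from $p_i$ to $p_j$. Since $Q = \bigcup_l Q_l$ and the $Q_l$ intersect in common faces, $\gamma$ decomposes into finitely many sub-segments, each lying in some $Q_l$, meeting consecutive cells along faces of $\Delta$. After an arbitrarily small perturbation of $p_i$ within $\operatorname{relint}(Q_i)$ (which is possible because $Q_i$ is $d$-dimensional), we may assume that $\gamma$ avoids the relative interiors of all faces of $\Delta$ of dimension at most $d-2$; there are only finitely many such faces and each is a positive-codimension affine subspace of the $d$-dimensional affine hull of $Q$. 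Then $\gamma$ only crosses faces of dimension exactly $d-1$, so the sequence of maximal cells it visits is a walk in the graph from $i$ to $j$ whose consecutive pairs share a $(d-1)$-dimensional face. This establishes connectedness of the graph.

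For the "in particular" clause, if $d \geq 2$ then every edge of the adjacency graph corresponds to an intersection $Q_i \cap Q_j$ of dimension $d-1 \geq 1$, so any two maximal cells are connected by a chain of cells whose consecutive intersections have dimension $\geq 1$. If $d = 1$ and the subdivision is trivial, there is only a single maximal cell and the statement is vacuous.

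The only real subtlety is the genericity of the perturbation: I would argue it by noting that each face of dimension $\leq d-2$ together with $p_j$ spans an affine subspace of dimension $\leq d-1$ inside the affine hull of $Q$, so the set of choices of $p_i \in \operatorname{relint}(Q_i)$ for which $\gamma$ meets such a face is a finite union of proper affine subspaces of the $d$-dimensional $\operatorname{relint}(Q_i)$, hence has empty interior. This is the main (minor) technical step; once it is in place, the conclusion is immediate.
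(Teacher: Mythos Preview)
Your argument is correct and is essentially identical to the paper's proof: both pick interior points $p_i$ and $p_j$ in the two maximal cells, perturb to make the segment $\overline{p_ip_j}$ avoid all faces of dimension $\leq d-2$, and read off the resulting walk through codimension-one walls. The paper states this in two sentences without justifying the genericity, so your version is in fact more detailed than the original.
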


\begin{proof}
For any two vertices $i,j\in [m]$, pick points $p_i$ and $p_j$ in the interior of $Q_i$ and $Q_j$ (respectively).  Perturbing $p_i$ and $p_j$ if necessary, we have that the line segment $\overline{p_ip_j}$ meets faces of the polyhedral subdivision only of dimension $\geq d-1$.
\end{proof}



\begin{proof}[Proof of \Cref{thm:mixed}]
Let $\Delta_1, \ldots, \Delta_k$ be subdivisions of $\mathcal A_1, \ldots, \mathcal A_k$ (respectively) making up the mixed subdivision $\Delta_w$. 
For each $\overline\bu\in \RR^E/\RR\mathbf 1$, the face $\Delta_w^{\overline\bu}$ is a Minkowski sum $\sum_{i=1}^s \mathcal F_{i,\overline\bu}$ where $\mathcal F_{i,\overline\bu}$ is a face of $\Delta_i$.  For each $i = 1, \ldots, k$, consider the partition of $\RR^E$ by the equivalence relation $\overline\bu\sim_i \overline\bu' \iff \mathcal F_{i,\overline\bu} = \mathcal F_{i,\overline\bu'}$.  This partition consists of components whose closures define a polyhedral complex $\Sigma_i$ that coarsens the dual complex  $\Sigma_w$.  
We claim that each $\Sigma_i$ is a dual complex $\Sigma_{w_i}$ for some weight $w_i: \mathcal A_i \to \RR$ such that $\Delta_{w_i}^{\overline\bu} = \mathcal F_{i,\overline\bu}$ for all $\overline\bu\in \RR^E/\RR\mathbf 1$.  We are then done by \Cref{lem:Minkowskifaces}.

For the claim, fix $\overline\bu \in \RR^E/\RR\mathbf 1$ lying in a non-maximal cell of $\Sigma_{w}$. By \cite[Lemma 3.3.6]{MS15}, the polyhedral complex $\operatorname{star}_{\Sigma_{w}}(\overline\bu)$ is the normal fan of the polytope $\operatorname{Conv}(\Delta_{w}^{\overline\bu})=\sum_{i=1}^k \operatorname{Conv}(\mathcal F_{i,\overline \bu})$. Now fix any $1\leq i \leq k$.  By construction of $\Sigma_i$, the normal fan of $\operatorname{Conv}(\mathcal F_{i,\overline \bu})$ is equal to 
$\operatorname{star}_{\Sigma_i}(\overline\bu)$.
As $\operatorname{Conv}(\mathcal F_{i,\overline\bu})$ is a lattice polytope by our running integrality assumption on point configurations,
it follows that the union of non-maximal cells of $\Sigma_i$ is a rational, pure, balanced, polyhedral complex of codimension 1.  In other words, the complex $\Sigma_i$ satisfies the condition of \cite[Proposition 3.3.10]{MS15}, which states that there exists a weighted point configuration $\widetilde w_i: \widetilde{\mathcal A}_i \to \RR$ with $\Sigma_i = \Sigma_{\widetilde w_i}$.  

We now use $\widetilde w_i$ to define weights $w_i'$ on $\mathscr V_i$, where $\mathscr V_i = \operatorname{Vert}(\Delta_i)$ is the set of elements of $\mathcal A_i$ that appear as vertices of the subdivision $\Delta_i$.  This will have the property that the induced coherent subdivision satisfies $\operatorname{Conv}(\Delta_{w_i'}^{\overline \bu}) = \operatorname{Conv}(\mathcal F_{i,\overline\bu})$ for all $\overline\bu\in \RR^E/\RR\mathbf 1$, so that $w_i'$ naturally extends to a weight $w_i$ on $\mathcal A_i$ satisfying $\Delta_{w_i}^{\overline\bu} = \mathcal F_{i,\overline\bu}$ for all $\overline\bu\in \RR^E/\RR\mathbf 1$.  

By construction, the two polytopes $\operatorname{Conv}(\Delta_{\widetilde w_i}^{\overline\bu})$ and $\operatorname{Conv}(\mathcal F_{i,\overline\bu})$ are dilates of each other (up to translation) for every $\overline\bu\in \RR^E/\RR\mathbf 1$.  Since we assumed that $|\mathcal A_i| = 1$ if $\dim \operatorname{Conv}(\mathcal A_i) = 1$, by \Cref{lem:connected} the polyhedral subdivision from $\Delta_i$ is connected through dimension $\geq 1$.  Hence, the dilation factor is global; that is, (up to translation) the set $\mathscr V_i$ is a dilation of the set of vertices of $\Delta_{\widetilde w_i}$.  Assign the weight $w_i'$ on $\mathscr V_i$ via this dilation correspondence.
\end{proof}

\begin{rem}\label{rem:mixed}
Note that if $w$ was already a Minkowski sum $w_1'+\cdots+w_k'$, then the constructed weights $\{w_i\}_{1\leq i \leq k}$ in the proof satisfy $\Sigma_{w_i} = \Sigma_{w_i'}$ for all $1\leq i \leq k$.
\end{rem}


\subsection{M-convex functions and their dual complexes}
\label{subsec:mconvex}

We review M-convex functions, and establish \Cref{thm:Mconvexdualcplx} concerning the structure of their dual complexes.

\begin{defn}
A function $\mu: \ZZ^{[n]}\to \TT$ is \textbf{M-convex} if for $\mathbf a = (a_1, \ldots, a_n),\ {\mathbf{b} = (b_1, \ldots, b_n)} \in \ZZ^{[n]}$ and $i\in [n]$ such that $ a_i > b_i$, there exists $j\in [n]$ such that $ a_j <  b_j$ and
\begin{equation}\label{eqn:Mconvex}\tag{M}
\mu(\mathbf a) + \mu(\mathbf b) \geq \mu(\mathbf a - \be_i + \be_j) + \mu(\mathbf b-\be_j+\be_i).
\end{equation}
The set $\{\bv \in \ZZ^{[n]} \mid \mu( \bv) \neq \infty\}$ is the \textbf{effective domain} $\operatorname{dom}(\mu)$ of $\mu$, and is assumed to be finite.
\end{defn}

We view $\mu$ as a weighted point configuration $\mu: \operatorname{dom}(\mu) \to \RR$.  For M-convex functions $\mu_1$ and $\mu_2$, their Minkowski sum as weighted point configurations (not as functions) is denoted $\mu_1 + \mu_2$. 
%
M-convex functions are studied in several contexts. For instance, they are foundational objects of discrete convex analysis \cite{Mur03}.  We focus on their connection to generalized permutohedra.

\begin{defn} A lattice polytope $Q$ in $\RR^{[n]}$ is a \textbf{generalized permutohedron} if every edge of $Q$ is parallel to $\be_i - \be_j$ for some $i,j \in [n]$.
\end{defn}

The definition implies that a generalized permutohedron is homogeneous as a point configuration.

\medskip
Generalized permutohedra form a rich combinatorial class of lattice polytopes \cite{Edm70, Pos09, AA17}.  For example, base polytopes of matroids and flag matroids, which we discuss in \S\ref{subsection:dressians} and \S\ref{subsec:flagmatsubdiv}, are examples of generalized permutohedra \cite{GGMS87, BGW03}.  
Generalized permutohedra are related to M-convex functions in the following way.

\begin{thm}\label{thm:Mconvex}
Let $\mu: \ZZ^{[n]} \to {\TT}$ be a function with an effective domain $\operatorname{dom}(\mu)$.
\begin{enumerate}
\item If $\mu$ takes only two values $\{c,\infty\}$ for some $c\in \RR$, then $\mu$ is M-convex if and only if $\operatorname{dom}(\mu)$ is the set of lattice points of a generalized permutohedron.
\item More generally, $\mu$ is M-convex if and only if the subdivision $\Delta_\mu$ of $\operatorname{dom}(\mu)$ is tight and its faces are the sets of lattice points of generalized permutohedra.
\end{enumerate}
In particular, if $\mu$ is M-convex, the point configuration $\operatorname{dom}(\mu)$ is a generalized permutohedron, and hence is homogeneous.
\end{thm}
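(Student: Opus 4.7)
The plan is to reduce both statements to the classical theory of M-convex functions from discrete convex analysis \cite{Mur03}, which is formulated in terms of exactly the polyhedral geometry of generalized permutohedra.

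For part (1), when $\mu$ takes only values in $\{c, \infty\}$, the inequality (M) reduces to the symmetric exchange axiom on $\operatorname{dom}(\mu)$: for $\mathbf a, \mathbf b \in \operatorname{dom}(\mu)$ and $a_i > b_i$, there is $j$ with $a_j < b_j$ such that both $\mathbf a - \be_i + \be_j$ and $\mathbf b + \be_i - \be_j$ lie in $\operatorname{dom}(\mu)$.  For the forward direction, I would show every edge of $\operatorname{Conv}(\operatorname{dom}(\mu))$ is parallel to some $\be_i - \be_j$: if $\mathbf a$ and $\mathbf b$ are adjacent vertices minimizing a linear functional $\bu$, homogeneity yields an index $i$ with $a_i > b_i$, and the exchange axiom produces $\mathbf a' = \mathbf a - \be_i + \be_j \in \operatorname{dom}(\mu)$; minimality of $\bu$ at $\mathbf a$ and at the twin point $\mathbf b + \be_i - \be_j$ forces $u_i = u_j$, placing $\mathbf a'$ on the edge and pinning its direction to $\be_j - \be_i$.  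I would then need to verify that $\operatorname{dom}(\mu)$ equals the full set of lattice points of $\operatorname{Conv}(\operatorname{dom}(\mu))$, which follows by an iterative exchange argument starting from an arbitrary lattice point and moving into $\operatorname{dom}(\mu)$.  The converse, that the lattice points of an integral generalized permutohedron satisfy the symmetric exchange axiom, I would prove from Edmonds' submodular description \cite{Edm70} via a standard uncrossing argument on the tight constraint sets at $\mathbf a$ and $\mathbf b$.

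For part (2), in the forward direction I would verify each face $\mathcal F = \Delta_\mu^{\overline \bu}$ is the lattice points of a generalized permutohedron by showing that the restriction of $\mu$ to $\mathcal F$, a two-valued function, is M-convex, then applying part (1).  Given $\mathbf a, \mathbf b \in \mathcal F$ with $a_i > b_i$, axiom (M) for $\mu$ produces $j$ and exchanged points, and the minimality condition defining $\mathcal F$ forces (M) to hold with equality and both exchanged points to lie in $\mathcal F$.  Tightness of $\Delta_\mu$ follows from extremality of M-convex functions: every $\mathbf a \in \operatorname{dom}(\mu)$ is a vertex of the lower envelope of $\Gamma_\mu(\operatorname{dom}(\mu))$, since the piecewise linear convex hull extension $\widehat \mu$ of an M-convex function agrees with $\mu$ on $\operatorname{dom}(\mu)$.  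For the reverse direction, given $\mathbf a, \mathbf b \in \operatorname{dom}(\mu)$ and $i$ with $a_i > b_i$, I would traverse the segment $\overline{\mathbf a \mathbf b}$ through successive maximal cells of the dual complex $\Sigma_\mu$: in each face crossed, which is a generalized permutohedron by hypothesis, combining the symmetric exchange from part (1) with \Cref{lem:linearshift} localized to that face (where $\mu$ is affine) yields a local move in a direction $\be_{j'} - \be_{i'}$ preserving (M) with equality, and chaining these local moves yields the required global $j$.

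The main obstacle I expect is the chaining step in the reverse direction of part (2): the local exchange indices may vary from face to face along $\overline{\mathbf a \mathbf b}$, and verifying that the accumulated inequalities telescope to (M) with a single $j$ satisfying $a_j < b_j$ requires careful bookkeeping across adjacent faces.  If this becomes unwieldy, an alternative route is the standard equivalence in discrete convex analysis between M-convexity of $\mu$ and convexity of its piecewise linear convex hull extension $\widehat \mu : \operatorname{Conv}(\operatorname{dom}(\mu)) \to \RR$; the hypotheses of part (2) on $\Delta_\mu$ are precisely the polyhedral condition under which this extension exists and is convex with the stated face structure, from which (M) can be read off by comparing values on parallelogram configurations inside each face.
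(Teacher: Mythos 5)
Your proposal takes a genuinely different and more self-contained route than the paper.  For part (1) the paper simply cites Murota (Theorem 4.15 of the discrete convex analysis book), whereas you re-derive the equivalence directly via the edge-direction argument and an uncrossing argument from Edmonds; your sketch is plausible but lengthier than necessary.  Your forward direction of part (2) is essentially the paper's proof made explicit: the paper observes that adding a linear functional preserves M-convexity, that the argmin of an M-convex function is M-convex (hence a generalized permutohedron by part (1)), and then applies Lemma~\ref{lem:linearshift}; you unwind precisely this into a direct exchange argument inside a face, which is correct.  You also address tightness explicitly via agreement with the convex closure, which the paper's proof glosses over --- that is a genuine improvement in completeness.

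The main gap is in the reverse direction of part (2), and you partly anticipate it.  Traversing the segment $\overline{\mathbf a \mathbf b}$ through maximal cells of $\Delta_\mu$ does not obviously produce a single global index $j$ with $a_j < b_j$: the local exchange pairs $(i', j')$ in successive cells can differ, and chaining them does not telescope into a single instance of~\eqref{eqn:Mconvex} without additional structure.  Your fallback --- convexity of the piecewise-linear extension $\widehat\mu$ --- is also not quite enough as stated: convexity of $\widehat\mu$ alone does not imply M-convexity of $\mu$; what is needed is the combination of convexity with the hypothesis that the faces of $\Delta_\mu$ are generalized permutohedra, and the ``reading off (M) on parallelogram configurations inside each face'' step does not work directly when $\mathbf a$ and $\mathbf b$ lie in different faces.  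The clean way to close this is to invoke the characterization (e.g.\ Murota's Theorem 6.46 or its analogues) that $\mu$ is M-convex if and only if for every linear shift the argmin set is an M-convex set; your hypotheses deliver exactly that condition via Lemma~\ref{lem:linearshift}.  That is also the implicit route of the paper, which does not spell out the reverse direction in detail.  So: the forward direction is solid and tightness is a nice addition, but the reverse direction as written has a real gap that should be repaired by the cited characterization rather than by cell-by-cell traversal.
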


\begin{proof}
The first statement (1) is \cite[Theorem 4.15]{Mur03}.  For the second statement (2), we note the following observations.
\begin{itemize}
\item Let $\mu: \ZZ^{[n]} \to \TT$.  For any $\bu \in \RR^{[n]}$, the function $\mu(\cdot) + \langle \bu, \cdot \rangle: \ZZ^{[n]} \to \TT$ defined by $\bv \mapsto \mu(\bv) + \langle \bu, \bv \rangle$ is M-convex if and only if $\mu$ is.
\item Let $\mu: \ZZ^{[n]} \to \TT$ be an M-convex function.  Then the function defined by
$\bv \mapsto \min(\mu)$ if $\mu(\bv) = \min(\mu)$ and $\bv \mapsto 
\infty$ otherwise
is also M-convex.  In other words, by the first statement, the face $\Delta_\mu^{\mathbf 0}$ is the set of lattice points of a generalized permutohedron.
\end{itemize}
The second statement now follows from the first by applying \Cref{lem:linearshift} to these observations.
\end{proof}

Let us now turn to the dual complex $\Sigma_\mu$ of $\mu$.  Its polyhedral cells are subsets of $\RR^{[n]}/\RR\mathbf 1$.  Consider the closures of these polyhedral cells inside $\PP(\TT^{[n]})$.  For each nonempty proper subset $S\subsetneq [n]$, this defines a polyhedral complex structure on the boundary $T_S \subset \PP(\TT^{[n]})$.  While these polyhedral complex structures can be difficult to describe for general weighted point configurations, for M-convex functions we give an explicit description in \Cref{thm:Mconvexdualcplx}.  This explicit description will be instrumental in our proof of \Cref{thm:main2} and \Cref{thm:troplinsubdiv}.  We first note the following general boundary behavior.

\begin{lem}\label{lem:boundaryface}
Let $w$ be a weight on a point configuration $\mathcal A$ in $\RR^{[n]}$.  For a nonempty subset $S$, fix $\overline{\bu'}\in T_S$.  For a sufficiently small open neighborhood $U$ of $\overline{\bu'}$, one has $\Gamma_w(\mathcal A)^{(\overline \bu,1)} = \Gamma_{w}(\mathcal A^{\be_{[n]\setminus S}})^{(\overline \bu,1)}$ for any $\overline{\bu}\in U\cap T_{[n]}$. In other words, near $T_S$, the dual complex $\Sigma_w$ is the same as the dual complex of the restriction of $w$ to $\mathcal A^{\be_{[n]\setminus S}}$.
\end{lem}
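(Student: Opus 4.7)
Proof plan:

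The approach is to reduce to showing the one-sided containment $\Gamma_w(\mathcal A)^{(\overline\bu,1)}\subseteq \mathcal A^{\be_{[n]\setminus S}}$ for every $\overline\bu$ in a sufficiently small neighborhood of $\overline{\bu'}$. Once this is established, the stated equality is automatic, because both $\Gamma_w(\mathcal A)^{(\overline\bu,1)}$ and $\Gamma_w(\mathcal A^{\be_{[n]\setminus S}})^{(\overline\bu,1)}$ are then the $\mathrm{arg\,min}$ of the same function $\langle \bu,\mathbf a_\cdot\rangle + w(\cdot)$ over the same subset $\mathcal A^{\be_{[n]\setminus S}}$. The first step is to unpack the topology: a basic open neighborhood of $\overline{\bu'}\in T_S$ in $\PP(\TT^{[n]})$ consists of those $\overline\bu$ admitting a representative $\bu\in\RR^{[n]}$ with $|u_i-u'_i|<\epsilon$ for all $i\in S$ and $u_j>M$ for all $j\in[n]\setminus S$, where $\epsilon$ can be made arbitrarily small and $M$ arbitrarily large.

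Next I would pick any witness $j\in\mathcal A^{\be_{[n]\setminus S}}$ and, for each candidate $i\in\mathcal A\setminus\mathcal A^{\be_{[n]\setminus S}}$, write
\[
\langle \bu,\mathbf a_i-\mathbf a_j\rangle + w(i)-w(j) = \Big(\sum_{k\in S} u_k(\mathbf a_i-\mathbf a_j)_k + w(i)-w(j)\Big) + \sum_{k\notin S} u_k(\mathbf a_i-\mathbf a_j)_k.
\]
The parenthesized $S$-part is bounded in absolute value uniformly on the neighborhood, since $u_k$ for $k\in S$ lies in a fixed compact set and $\mathcal A, w$ are finite, so all growth must come from the second sum. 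Integrality of $\mathcal A$ together with the definition of the face gives the key lower bound $\sum_{k\notin S}(\mathbf a_i-\mathbf a_j)_k\ge 1$. In the setting actually needed for the dual-complex analysis of the next subsection — where $\operatorname{Conv}(\mathcal A)$ is a generalized permutohedron, cf.\ \Cref{thm:Mconvex} — I would close with an ordering argument: since the normal fan of $\operatorname{Conv}(\mathcal A)$ coarsens the braid fan, as soon as $M>\max_{i\in S}u'_i+\epsilon$ the representative $\bu$ satisfies $u_j>u_i$ for every $j\notin S$ and $i\in S$, placing $\bu$ in the normal cone of the face $\operatorname{Conv}(\mathcal A)^{\be_{[n]\setminus S}}$ and forcing $\Gamma_w(\mathcal A)^{(\overline\bu,1)}\subseteq \mathcal A^{\be_{[n]\setminus S}}$.

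The main obstacle, which I would flag explicitly, is the handling of arbitrary (non-generalized-permutohedron) point configurations: the coordinates $u_k$ for $k\notin S$ can diverge at very different rates within a single neighborhood of $\overline{\bu'}$, so if $(\mathbf a_i-\mathbf a_j)_k$ has mixed signs on $[n]\setminus S$, the second sum need not be positive from the coefficient bound $\ge 1$ alone. For every use of this lemma in the paper the configurations are lattice points of generalized permutohedra, so the ordering argument of the previous paragraph closes the proof. In full generality one either adds the hypothesis that $\operatorname{Conv}(\mathcal A)$ is a generalized permutohedron, or refines the estimate by combining integrality of $(\mathbf a_i-\mathbf a_j)_k$ with the finiteness of $\mathcal A$ and a more careful control of the ratios of $u_k$'s for $k\notin S$ to extract uniform positive growth in $M$.
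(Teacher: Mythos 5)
Your instinct that the lemma cannot be taken at face value for arbitrary integral homogeneous configurations is correct, and the paper's own proof has exactly the gap you diagnose. Here is a concrete counterexample: take $\mathcal A=\{(2,-1,0),(0,0,1)\}\subset\ZZ^3$, $w\equiv 0$, $S=\{3\}$, and $\overline{\bu'}$ the class of $(\infty,\infty,0)$. Then $\mathcal A^{\be_{\{1,2\}}}=\{(0,0,1)\}$, but $\bu=(M,M^2,0)$ lies in every neighborhood $U\cap T_{[3]}$ of $\overline{\bu'}$ once $M$ is large, and for $M\geq 3$ one has $\langle\bu,(2,-1,0)\rangle=2M-M^2<0=\langle\bu,(0,0,1)\rangle$, so $\Gamma_w(\mathcal A)^{(\overline\bu,1)}=\{(2,-1,0)\}\neq\Gamma_w(\mathcal A^{\be_{\{1,2\}}})^{(\overline\bu,1)}$. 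The culprit is exactly what you describe: the entries $u_j$ for $j\notin S$ may escape to $\infty$ at incommensurate rates within a single neighborhood, so the coefficient bound $\sum_{k\notin S}(\mathbf a_i-\mathbf a_j)_k\geq 1$ does not yield positive growth, and the paper's ``must first minimize $\langle\be_{[n]\setminus S},\mathbf a_i\rangle$'' step (a lexicographic-minimization argument) is not justified by ``$\min\{u_j-u_i\}$ large'' alone.

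Your proposed braid-fan remedy for the generalized-permutohedron case, however, does not close the gap by itself. Placing $\bu$ in the normal cone of $\operatorname{Conv}(\mathcal A)^{\be_{[n]\setminus S}}$ controls the \emph{unlifted} face $\mathcal A^{\bu}$, but the object in the lemma is the \emph{lifted} face $\Gamma_w(\mathcal A)^{(\overline\bu,1)}$, which the weight $w$ can move off $\mathcal A^{\be_{[n]\setminus S}}$ unless $\bu$ sits ``deep'' in the cone by an amount depending on $w$; and uniformity of that depth over the neighborhood fails for the same rate-disparity reason. The clean way to close the argument in the M-convex setting (which is the only setting the lemma is ever invoked in) is the local optimality criterion for M-convex functions (Murota): if $\bv_0\in\operatorname{dom}(\mu)$ does not minimize $\langle\be_{[n]\setminus S},\cdot\rangle$ on $\operatorname{dom}(\mu)$, then since that restricted linear function is M-convex there is a single exchange $\bv_0-\be_i+\be_j\in\operatorname{dom}(\mu)$ which strictly decreases it, and a decrease forces $i\notin S$, $j\in S$. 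Then
\[
\bigl(\mu(\bv_0-\be_i+\be_j)+\langle\bu,\bv_0-\be_i+\be_j\rangle\bigr)-\bigl(\mu(\bv_0)+\langle\bu,\bv_0\rangle\bigr)
=\bigl(\mu(\bv_0-\be_i+\be_j)-\mu(\bv_0)\bigr)+u_j-u_i
\]
is $<0$ once $u_i>\max_{k\in S}|u_k|+\max_{\bv,\bv'}|\mu(\bv)-\mu(\bv')|$, a condition that \emph{does} hold uniformly on $U\cap T_{[n]}$ for $U$ small. This gives $\Gamma_\mu(\mathcal A)^{(\overline\bu,1)}\subseteq\mathcal A^{\be_{[n]\setminus S}}$, and the lemma follows as in your first paragraph. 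So: your reduction and your diagnosis of the gap are right, but the fix should invoke M-convexity through the single-exchange optimality criterion rather than the braid-fan containment.
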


\begin{proof}
Let $\bu = (u_i)_{i\in S} \times (u_j)_{j\notin S}$.  Shrinking $U$ if necessary, we can make $\min\{u_i - u_j \mid i\in S,\ j\notin S\}$ arbitrarily large.  Since $\mathcal A$ is finite and $w$ is fixed, this means that for $i\in \mathcal A$
to minimize $\langle {\bu}, \mathbf a_i \rangle +w(i)$, it must first minimize $\langle \be_{[n]\setminus S}, \mathbf a_i\rangle$.
\end{proof}

Next, we note that a property known as the \emph{Hopf monoid structure} of generalized permutohedra extends to M-convex functions.

\begin{notation}
 We need the following notations:  For a lattice polytope $Q\subset \RR^{[n]}$ and a nonempty subset $S\subseteq [n]$, the projection of the face $Q^{\be_{[n]\setminus S}}$ under $\RR^{[n]} \to \RR^S$ is denoted $Q|_S$, and the projection of $Q^{\be_{[n]\setminus S}}$ under $\RR^{[n]} \to \RR^{[n]\setminus S}$ is denoted $Q/_S$.  Both are lattice polytopes, and we write $Q|_S \times Q/_S \subset \RR^S \times \RR^{[n]\setminus S} \simeq \RR^{[n]}$ for their product, considered as a polytope in $\RR^{[n]}$.
 
 Our notation here differs from \cite{AA17} by a complementation ($\be_{[n]\setminus S}$ instead of $\be_S$).  Since $-\be_S$ and $\be_{[n]\setminus S}$ are equal as elements in $\RR^{[n]}/\RR\mathbf 1$, the difference is due to our "min" convention for polyhedral operations instead of the "max" convention used in \cite{AA17}.
\end{notation}

\begin{thm}\label{thm:AA17}\cite[Theorem 6.1]{AA17} Let $Q$ be a generalized permutohedron in $\RR^{[n]}$, and $S\subseteq [n]$ be a nonempty subset.  Then the polytopes $Q|_S$ and $Q/_S$ are generalized permutohedra in their respective spaces.  Moreover, 
$Q^{\be_{[n]\setminus S}} = Q|_S \times Q/_S$ and in particular is a generalized permutohedron.
\end{thm}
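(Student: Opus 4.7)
The plan is to analyze the edges of the face $F := Q^{\be_{[n]\setminus S}}$, deduce that they split cleanly into ``$S$-type'' and ``$([n]\setminus S)$-type'' directions, and then use a simplex-method argument to exhibit the product decomposition $F = Q|_S \times Q/_S$. The case $S = [n]$ is trivial, so I assume $S$ is a proper nonempty subset.

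First I would establish the edge dichotomy. Every edge of $F$ is an edge of $Q$, hence parallel to some $\be_i - \be_j$. Since $F$ sits inside the affine hyperplane $\{x \in \RR^{[n]} : \langle \be_{[n]\setminus S}, x\rangle = c\}$ where $c = \min_{x\in Q}\langle \be_{[n]\setminus S}, x\rangle$, any edge direction must be orthogonal to $\be_{[n]\setminus S}$, forcing $\{i,j\} \subseteq S$ or $\{i,j\} \subseteq [n]\setminus S$. This immediately implies that the projections $Q|_S = \pi_S(F)$ and $Q/_S = \pi_{[n]\setminus S}(F)$ are lattice polytopes whose edges are of the form $\be_i - \be_j$ with both indices in $S$, or both in $[n]\setminus S$, respectively; so both are generalized permutohedra in the appropriate coordinate subspace.

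Next, identifying $\RR^{[n]} \simeq \RR^S \times \RR^{[n]\setminus S}$, the inclusion $F \subseteq Q|_S \times Q/_S$ is immediate from surjectivity of the projections, and by convexity the reverse inclusion reduces to checking that each vertex $(v, w)$ of $Q|_S \times Q/_S$ lies in $F$. I would choose linear functionals $\ell_v$ on $\RR^S$ and $\ell_w$ on $\RR^{[n]\setminus S}$ uniquely minimized at $v$ and $w$, let $\tilde\ell_v, \tilde\ell_w$ be their pullbacks to $\RR^{[n]}$, and pick a vertex $x^{(0)}$ of the nonempty face $F^{\tilde\ell_v}$; by uniqueness of $v$ this satisfies $\pi_S(x^{(0)}) = v$. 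Running the simplex method on $F$ starting from $x^{(0)}$ to minimize $\tilde\ell_w$ only uses $\tilde\ell_w$-improving edges, but $\tilde\ell_w$ is constant along every $S$-edge, so every such improving edge is an $([n]\setminus S)$-edge, which preserves $\pi_S$. Termination produces a vertex $x^*$ of $F^{\tilde\ell_w}$ with $\pi_S(x^*) = v$; since uniqueness of $w$ forces $\pi_{[n]\setminus S}(F^{\tilde\ell_w}) = \{w\}$, we conclude $x^* = (v, w) \in F$.

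The main obstacle is the product step, and the edge dichotomy from the first step is exactly what makes it tractable: it decouples the $S$- and $([n]\setminus S)$-blocks so that a linear-programming walk sees only one block at a time. A natural alternative would be an inductive argument peeling off facets of $F$, but the edge-and-simplex route is shorter and explains transparently why the two coordinate blocks do not interact inside $F$.
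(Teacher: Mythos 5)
The paper itself offers no proof here — it simply cites \cite[Theorem 6.1]{AA17}, where the statement is derived via the submodular-function/normal-fan description of generalized permutohedra in the context of the Hopf-monoid machinery. Your argument is a genuinely different and more elementary route, and it is correct. The edge dichotomy you start from — every edge of the face $F = Q^{\be_{[n]\setminus S}}$, being an edge of $Q$ and hence parallel to some $\be_i - \be_j$, must be orthogonal to $\be_{[n]\setminus S}$ because $F$ lies on the corresponding supporting hyperplane, forcing $\{i,j\}\subseteq S$ or $\{i,j\}\subseteq [n]\setminus S$ — is exactly the right organizing observation. The simplex-method walk then cleanly establishes $Q|_S \times Q/_S \subseteq F$: at any non-optimal vertex for $\tilde\ell_w$ there is a strictly improving edge, no $S$-type edge improves $\tilde\ell_w$, and $([n]\setminus S)$-type edges preserve $\pi_S$, so the walk terminates at a vertex $x^*$ with $\pi_S(x^*)=v$ and $\pi_{[n]\setminus S}(x^*)=w$.

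One step is stated as "immediate" but deserves a sentence. That $Q|_S = \pi_S(F)$ has edges parallel to some $\be_i-\be_j$ with $i,j\in S$ is not automatic for projections of generalized permutohedra (the projection of $\Pi_3$ to $(x_1,x_2)$ has edges parallel to $\be_1$ and $\be_2$); you need the edge dichotomy for $F$ specifically. The clean fill-in: an edge $e$ of $\pi_S(F)$ equals $\pi_S(F^{\pi_S^*\ell})$ for some linear functional $\ell$ on $\RR^S$; the face $G := F^{\pi_S^*\ell}$ is a face of $Q$ and hence a generalized permutohedron whose edges obey your dichotomy; the $([n]\setminus S)$-type edge directions of $G$ lie in $\ker\pi_S$, so for $\pi_S(G)=e$ to be one-dimensional all $S$-type edge directions of $G$ must project into a single line, and some $S$-type edge must project nontrivially, giving $e$ parallel to $\be_i-\be_j$ with $i,j\in S$. (Alternatively, and perhaps more cleanly, reorder the proof: establish the product decomposition first, then deduce the GP property of $Q|_S$ and $Q/_S$ from the fact that edges of a Cartesian product are products of an edge in one factor with a vertex in the other, together with the dichotomy on edges of $F$.)
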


This property of generalized permutohedra extends to M-convex functions.  If $w_1, w_2$ are weights on $\mathcal A_1, \mathcal A_2$ in $\RR^{S_1}, \RR^{S_2}$ (respectively), let us write $w_1 \times w_2$ for the weight on $\mathcal A_1 \times \mathcal A_2$ in $\RR^{S_1}\times \RR^{S_2}$ defined by $w(i_1,i_2) := w(i_1) + w(i_2)$. 

\begin{lem}\label{lem:Mconvexhopf}
Let $\mu: \ZZ^{[n]}\to \TT$ be M-convex, and write $Q = \operatorname{dom}(\mu)$.  For a nonempty subset $S \subseteq [n]$, there exist weights $\mu|_S$ and $\mu/_S$ on $Q|_S$ and $Q/_S$ (respectively), each unique up to adding a constant globally, such that
\[
\mu^{\be_{[n]\setminus S}} = \mu|_S \times \mu/_S.
\]
The weighted point configurations $\mu^{\be_{[n]\setminus S}}$, $\mu|_S$, and $\mu/_S$ are M-convex.
\end{lem}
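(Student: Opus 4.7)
The plan is to first establish that $\mu^{\be_{[n]\setminus S}}$ is M-convex on the Cartesian product domain $Q|_S \times Q/_S$ supplied by \Cref{thm:AA17}, and then to decompose it as a sum $\mu|_S \times \mu/_S$ via a four-point identity.

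For the M-convexity of $\mu^{\be_{[n]\setminus S}}$, I invoke the general principle that restriction to an argmin face preserves M-convexity. By the first bullet in the proof of \Cref{thm:Mconvex}, the tilted function $\mu_t := \mu(\cdot) + t\langle \be_{[n]\setminus S}, \cdot\rangle$ remains M-convex, and for sufficiently large $t$ its argmin is precisely $Q^{\be_{[n]\setminus S}}$. For $\mathbf a, \mathbf b$ in this argmin and $i$ with $a_i > b_i$, the exchange axiom for $\mu_t$ supplies a partner $j$; since $\mu_t(\mathbf a) = \mu_t(\mathbf b) = \min \mu_t$, the inequality forces $\mu_t(\mathbf a - \be_i + \be_j)$ and $\mu_t(\mathbf b + \be_i - \be_j)$ to also attain $\min \mu_t$, so the exchanged points lie in $Q^{\be_{[n]\setminus S}}$ and the $t$-linear contributions cancel, producing the exchange axiom for $\mu^{\be_{[n]\setminus S}}$.

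The technical heart is the following four-point identity: writing points of $Q^{\be_{[n]\setminus S}} = Q|_S \times Q/_S$ as pairs $(\bv_1, \bv_2)$,
\[
\mu(\bv_1, \bv_2) + \mu(\bv_1', \bv_2') = \mu(\bv_1, \bv_2') + \mu(\bv_1', \bv_2)
\]
for all $\bv_1, \bv_1' \in Q|_S$ and $\bv_2, \bv_2' \in Q/_S$. The base case is when $\bv_1' = \bv_1 - \be_i + \be_j$ with $i, j \in S$. Apply the exchange axiom for $\mu^{\be_{[n]\setminus S}}$ to $\mathbf a = (\bv_1, \bv_2)$ and $\mathbf b = (\bv_1', \bv_2')$ at coordinate $i$: any partner $k$ with $a_k < b_k$ must keep $\mathbf a - \be_i + \be_k$ in the product domain $Q|_S \times Q/_S$, and homogeneity of $Q|_S$ forbids $k \in [n]\setminus S$, so $k \in S$; among $S$-coordinates the only one with $a_k < b_k$ is $k = j$. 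Thus the exchange gives $\mu(\bv_1, \bv_2) + \mu(\bv_1', \bv_2') \geq \mu(\bv_1', \bv_2) + \mu(\bv_1, \bv_2')$, and running the identical argument with $\bv_2 \leftrightarrow \bv_2'$ yields the reverse inequality, hence equality. The general case reduces to this by induction on the exchange distance in $Q|_S$; connectivity of that exchange graph is the M-convexity of the indicator function of $Q|_S$, which is a generalized permutohedron by \Cref{thm:AA17} and hence M-convex by the first part of \Cref{thm:Mconvex}. The inductive step is transitivity: adding two single-exchange four-point identities and canceling the common $\bv_1^{(1)}$-terms yields the identity for the combined exchange.

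Setting $\mu|_S(\bv_1) := \mu(\bv_1, \bv_2^*)$ and $\mu/_S(\bv_2) := \mu(\bv_1^*, \bv_2) - \mu(\bv_1^*, \bv_2^*)$ for chosen reference points $\bv_1^* \in Q|_S$ and $\bv_2^* \in Q/_S$, the four-point identity immediately yields $\mu^{\be_{[n]\setminus S}} = \mu|_S \times \mu/_S$. Uniqueness up to a global constant is clear: if $\mu|_S + \mu/_S = \mu|_S' + \mu/_S'$ then $\mu|_S - \mu|_S'$ depends only on $\bv_1$ while equaling $\mu/_S' - \mu/_S$ which depends only on $\bv_2$, so both are constants. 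Finally, M-convexity of $\mu|_S$ and $\mu/_S$ follows by specializing the exchange axiom for $\mu^{\be_{[n]\setminus S}}$ to pairs whose $[n]\setminus S$-components (respectively $S$-components) agree; in each case the partner coordinate is forced into $S$ (respectively $[n]\setminus S$) by the same homogeneity argument. The main obstacle is the four-point identity: the one-sided M-convex exchange inequality is not automatic, and the symmetric sandwich obtained by swapping the roles of $\bv_2$ and $\bv_2'$ is the key trick that upgrades it to an equality.
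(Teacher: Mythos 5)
Your overall architecture matches the paper's: reduce the decomposition claim to a four-point identity, prove the identity by an elementary-step base case plus the symmetric inequality trick, and then inherit M-convexity. But the preliminary step that establishes M-convexity of $\mu^{\be_{[n]\setminus S}}$ via tilting contains a genuine error that you then rely on.

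The claim that for sufficiently large $t$ the argmin of $\mu_t := \mu + t\langle \be_{[n]\setminus S}, \cdot\rangle$ is \emph{precisely} $Q^{\be_{[n]\setminus S}}$ is false. For large $t$ the argmin of $\mu_t$ is $\{\bv \in Q^{\be_{[n]\setminus S}} : \mu(\bv) \textnormal{ is minimal on } Q^{\be_{[n]\setminus S}}\}$, which is a face of $Q^{\be_{[n]\setminus S}}$ that is typically proper (it is all of $Q^{\be_{[n]\setminus S}}$ only when $\mu$ is constant there). So your tilting argument only verifies the exchange axiom for pairs $\mathbf a, \mathbf b$ in that smaller argmin, not for arbitrary pairs in $Q^{\be_{[n]\setminus S}}$, and it cannot produce the exchange axiom for $\mu^{\be_{[n]\setminus S}}$. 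This is not cosmetic: your base-case argument that the exchange partner is forced into $S$ relies on invoking the exchange axiom \emph{for $\mu^{\be_{[n]\setminus S}}$} (so that the exchanged points stay in the product domain), so the gap propagates.

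The fix does not need tilting at all. Take $\mathbf a, \mathbf b \in Q^{\be_{[n]\setminus S}}$ and $i$ with $a_i > b_i$, and apply the axiom \eqref{eqn:Mconvex} for $\mu$ itself to get a partner $j$. Since $\mu(\mathbf a), \mu(\mathbf b) < \infty$ the right side must be finite, so both exchange points lie in $Q$; and $\langle \be_{[n]\setminus S}, \mathbf a - \be_i + \be_j\rangle + \langle \be_{[n]\setminus S}, \mathbf b - \be_j + \be_i\rangle = 2\min_{Q}\langle \be_{[n]\setminus S}, \cdot\rangle$ forces both exchanged points into the face $Q^{\be_{[n]\setminus S}}$ and moreover forces $i \in S \iff j \in S$. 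That simultaneously gives M-convexity of $\mu^{\be_{[n]\setminus S}}$ and the partner-in-$S$ fact your base case needs. With that repair, your four-point argument goes through; it differs from the paper's only in that you take an elementary step in $Q|_S$ with the $Q/_S$-part arbitrary and then induct on exchange distance, whereas the paper reduces to simultaneous elementary steps in both $Q|_S$ and $Q/_S$ — both reductions are valid. For the final M-convexity of $\mu|_S$ and $\mu/_S$ you specialize the exchange axiom, while the paper goes through \Cref{thm:Mconvex}.(2); again both are correct routes.
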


\begin{proof}
As $Q$ is a generalized permutohedron, we have $Q^{\be_{[n]\setminus S}} = Q|_S \times Q/_S$.
Thus, for the first statement, it suffices to show that for every choice of lattice points $p,p'\in Q|_S$ and $q,q'\in Q/_S$, one has $\mu(p,q) - \mu(p',q) = \mu(p,q') - \mu(p',q')$.  Moreover, as $Q|_S$ and $Q/ S$ are both generalized permutohedra, it suffices to check in the case where $p - p'= \be_i - \be_{i'}$ and $q - q' = \be_j- \be_{j'}$ where $i,i'\in S$ and $j,j'\in [n]\setminus S$.  Applying the defining property \eqref{eqn:Mconvex} of an M-convex function twice,  once with $(\mathbf a, \mathbf b) = ((p,q),(p',q'))$ and again with $(\mathbf a, \mathbf b) = ((p,q'),(p',q))$, gives the desired equality.

For the second statement, applying the forward direction of \Cref{thm:Mconvex}.(2) to $\mu$ implies that the face $Q^{\be_{[n]\setminus S}}$ is subdivided into generalized permutohedra, which implies that both $Q|_S$ and $Q/_S$ are too. (If one of them has an edge not parallel to $\be_i - \be_j$, so does the product).  The converse direction of \Cref{thm:Mconvex}.(2) then implies that $\mu^{\be_{[n]\setminus S}}$, $\mu|_S$, and $\mu/_S$ are M-convex.
\end{proof}

We are now ready to describe explicitly the closure of $\Sigma_\mu$ inside $\PP(\TT^{[n]})$.

\begin{thm}\label{thm:Mconvexdualcplx}
Let $\mu$ be an M-convex function, considered as a weighted point configuration in $\RR^{[n]}$.  For a cell $\sigma \subset \RR^{[n]}/\RR\mathbf 1$ of the dual complex $\Sigma_\mu$, denote by $\overline\sigma$ its closure in $\PP(\TT^{[n]})$. 
For a nonempty subset $S\subseteq [n]$, we have
$
\{\overline\sigma \cap T_S \mid \sigma\in\Sigma_\mu\} = \Sigma_{\mu|_S},
$
where $T_S$ is identified with $\RR^{S}/\RR\mathbf 1$.
\end{thm}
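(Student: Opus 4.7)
The plan is to combine the local boundary analysis of \Cref{lem:boundaryface} with the product decomposition $\mu^{\be_{[n]\setminus S}} = \mu|_S \times \mu/_S$ from \Cref{lem:Mconvexhopf}.  By \Cref{lem:boundaryface}, each $\overline{\bu'} \in T_S$ has a neighborhood in which the cells of $\Sigma_\mu$ meeting $T_{[n]}$ come entirely from $\mu^{\be_{[n]\setminus S}}$; hence only cells of $\Sigma_\mu$ whose associated face in $\Delta_\mu$ lies in $Q^{\be_{[n]\setminus S}} = Q|_S\times Q/_S$ can have closure meeting $T_S$.  By \Cref{lem:Mconvexhopf}, every such face splits as $\mathcal F = \mathcal F_1 \times \mathcal F_2$ with $\mathcal F_1$ a face of $\Delta_{\mu|_S}$ and $\mathcal F_2$ a face of $\Delta_{\mu/_S}$, and the minimization of $\langle \bu, (p,q)\rangle + \mu|_S(p) + \mu/_S(q)$ decouples in the two factors to give
\[
\Delta_{\mu^{\be_{[n]\setminus S}}}^{\overline\bu} \;=\; \Delta_{\mu|_S}^{\overline{\bu|_S}} \times \Delta_{\mu/_S}^{\overline{\bu|_{[n]\setminus S}}}
\]
for any representative $\bu \in \RR^{[n]}$ of $\overline\bu$.

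The crux is then to show, for a cell $\sigma$ of $\Sigma_\mu$ associated with a face $\mathcal F = \mathcal F_1 \times \mathcal F_2$ as above, that $\overline\sigma \cap T_S$ is precisely the closed cell of $\Sigma_{\mu|_S}$ associated with $\mathcal F_1$.  For the inclusion $\subseteq$, take any sequence $\overline\bu_n \in \sigma$ converging to some $\overline{\bu'} \in T_S$; the product formula forces $\Delta_{\mu|_S}^{\overline{\bu_n|_S}} = \mathcal F_1$ for large $n$, and passing to the limit within the closed dual cell yields $\Delta_{\mu|_S}^{\overline{\bu'}} \supseteq \mathcal F_1$.  For the reverse inclusion, given $\overline{\bu'}$ with $\Delta_{\mu|_S}^{\overline{\bu'}} \supseteq \mathcal F_1$, I would construct an explicit approach sequence $\bu_n = (\bu'_n,\; \bv + t_n\mathbf 1) \in \RR^S \times \RR^{[n]\setminus S}$, where $\bu'_n$ lies in the relative interior of the $\mathcal F_1$-cell of $\Sigma_{\mu|_S}$ and tends to $\bu'$, $\bv$ represents any chosen point in the (relatively open) $\mathcal F_2$-cell of $\Sigma_{\mu/_S}$, and $t_n \to +\infty$.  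Homogeneity of $\mu/_S$ (guaranteed by \Cref{thm:Mconvex} together with \Cref{lem:Mconvexhopf}) then ensures $\Delta_{\mu/_S}^{\overline{\bv + t_n\mathbf 1}} = \Delta_{\mu/_S}^{\overline\bv} = \mathcal F_2$, so $\overline\bu_n \in \sigma$ and $\overline\bu_n \to \overline{\bu'}$ in $\PP(\TT^{[n]})$ as required.

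Surjectivity of the map $\sigma \mapsto \overline\sigma \cap T_S$ onto the cells of $\Sigma_{\mu|_S}$ is then immediate: any face $\mathcal F_1$ of $\Delta_{\mu|_S}$ can be paired with any face (say, a vertex) $\mathcal F_2$ of $\Delta_{\mu/_S}$ to produce an appropriate $\sigma$.  The main subtlety throughout is keeping the three quotients by $\RR\mathbf 1$ in $\RR^{[n]}$, $\RR^S$, and $\RR^{[n]\setminus S}$ straight; the hardest single point is arranging a sequence going to infinity in $\PP(\TT^{[n]})$ so that its $[n]\setminus S$-projection lands in a prescribed cell of $\Sigma_{\mu/_S}$, which the $t_n\mathbf 1$ shift together with homogeneity of $\mu/_S$ make straightforward.
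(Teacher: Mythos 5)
Your proposal is correct and follows exactly the strategy of the paper's (extremely terse) proof: first reduce via \Cref{lem:boundaryface} to the face $\mu^{\be_{[n]\setminus S}}$, then invoke the product decomposition $\mu^{\be_{[n]\setminus S}} = \mu|_S \times \mu/_S$ of \Cref{lem:Mconvexhopf}. You simply spell out the two directed-limit arguments that the paper leaves implicit, including the correct explicit approach sequence $\bu_n = (\bu'_n,\ \bv + t_n\mathbf 1)$ and the invariance of $\Delta_{\mu/_S}^{\overline{\bv + t_n\mathbf 1}}$ under the $\mathbf 1$-shift.
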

\begin{proof}
\Cref{lem:boundaryface} implies that $\{\overline\sigma \cap T_S \mid \sigma\in\Sigma_\mu\} = \{\overline\sigma \cap T_S \mid \sigma\in\Sigma_{\mu^{e_{[n]\setminus S}}}\}$. Applying \Cref{lem:Mconvexhopf} then gives the desired equality.
\end{proof}

\begin{notation}
Let $\mu$ be M-convex and $\overline\bu \in \PP(\TT^{[n]})$.  We denote
\[
\Delta_\mu^{\overline\bu} := \Delta_{\mu|_S}^{\overline {\bu'}},
\]
where $S\subseteq [n]$ is the subset satisfying $\overline\bu\in T_S$, so that $\bu = \bu'\times \infty^{[n]\setminus S}$ for some $\bu'\in \RR^{S}$.
\end{notation}

\begin{cor}\label{cor:Mconvexdualcplx}
Let $\mu$ be M-convex and $\emptyset\subsetneq S\subseteq [n]$.  The correspondence \eqref{eqn:dualcplx} for $T_S=\RR^S/\RR\mathbf 1$ gives
\[
\{ \overline\bu \in T_S \mid \Delta_{\mu}^{\overline\bu} \geq \mathcal F\} \longleftrightarrow \mathcal F \in \Delta_{\mu|_S}.
\]
This correspondence now extends to all of $\PP(\TT^{[n]})$:  the set $\PP(\TT^{[n]})$ is partitioned by the relative interiors $\{\overline\bu \in \PP(\TT^{[n]}) \mid \Delta_{\mu}^{\overline\bu} = \mathcal F\}$ as $\mathcal F$ ranges over all faces $\mathcal F$ of $\Delta_{\mu|_S}$ over all $\emptyset\subsetneq S \subseteq [n]$.
\end{cor}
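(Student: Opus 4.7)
The plan is to recognize this corollary as essentially a bookkeeping consequence of Theorem \ref{thm:Mconvexdualcplx} combined with the notational convention $\Delta_\mu^{\overline\bu} := \Delta_{\mu|_S}^{\overline{\bu'}}$ introduced just before the statement. The two assertions are handled separately: the first extends the dual-complex correspondence \eqref{eqn:dualcplx} from the torus $T_{[n]}$ to each boundary stratum $T_S$, and the second assembles these into a global partition of $\PP(\TT^{[n]})$.

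For the first assertion, I would fix a nonempty subset $S\subseteq [n]$. By \Cref{lem:Mconvexhopf}, the restriction $\mu|_S$ is itself an M-convex function on the generalized permutohedron $Q|_S\subset \RR^{S}$, so it is a legitimate weighted point configuration. Applying the defining correspondence \eqref{eqn:dualcplx} for the dual complex $\Sigma_{\mu|_S}\subset \RR^S/\RR\mathbf 1$ gives
\[
\{\overline{\bu'}\in \RR^S/\RR\mathbf 1 \mid \Delta_{\mu|_S}^{\overline{\bu'}} \geq \mathcal F\} \longleftrightarrow \mathcal F\in \Delta_{\mu|_S}.
\]
Under the identification $T_S = \RR^S/\RR\mathbf 1$ (writing $\overline\bu\in T_S$ as $\bu = \bu'\times \infty^{[n]\setminus S}$) and the definition $\Delta_\mu^{\overline\bu} := \Delta_{\mu|_S}^{\overline{\bu'}}$, this is exactly the claimed correspondence in the corollary.

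For the second assertion, I would invoke the fact, recalled right after the definition of $T_S$, that $\{T_S\}_{\emptyset\subsetneq S\subseteq [n]}$ partitions $\PP(\TT^{[n]})$. It therefore suffices to show that each stratum $T_S$ is partitioned by the relative interiors $\{\overline\bu\in T_S \mid \Delta_\mu^{\overline\bu} = \mathcal F\}$ as $\mathcal F$ ranges over faces of $\Delta_{\mu|_S}$. But under the identification $T_S = \RR^S/\RR\mathbf 1$ and the definition of $\Delta_\mu^{\overline\bu}$ on $T_S$, these relative interiors are exactly the cells of $\Sigma_{\mu|_S}$, which partition $\RR^S/\RR\mathbf 1$ by the last bullet of \Cref{notation:weights} applied to the weighted point configuration $\mu|_S$. (Equivalently, the closures in $\PP(\TT^{[n]})$ of cells of $\Sigma_\mu$, restricted to $T_S$, are the cells of $\Sigma_{\mu|_S}$ by \Cref{thm:Mconvexdualcplx}.) Taking the union over $\emptyset\subsetneq S\subseteq [n]$ yields the desired partition of $\PP(\TT^{[n]})$.

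There is no real obstacle here beyond carefully unwinding the definitions; the substantive content is \Cref{thm:Mconvexdualcplx}, which ensures that the naive notion of "face-at-infinity" of $\Sigma_\mu$ is well-behaved precisely because the boundary behavior of $\Sigma_\mu$ near $T_S$ agrees with $\Sigma_{\mu|_S}$ (via \Cref{lem:boundaryface} and the Hopf-monoid-style factorization in \Cref{lem:Mconvexhopf}). The proof can be written in a half-page and amounts to declaring the identifications, citing \Cref{thm:Mconvexdualcplx}, and reading off both conclusions.
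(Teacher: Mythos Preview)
Your proposal is correct and matches the paper's approach: the paper states the corollary without proof, treating it as an immediate consequence of \Cref{thm:Mconvexdualcplx} together with the notational convention $\Delta_\mu^{\overline\bu} := \Delta_{\mu|_S}^{\overline{\bu'}}$ and the stratification $\PP(\TT^{[n]}) = \bigsqcup_S T_S$. Your write-up simply makes these implicit steps explicit.
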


\section{Dressians and projective tropical linear spaces}
\label{section:projtroplin}

We review Dressians and valuated matroids in \S\ref{subsection:dressians}.  Then, we introduce projective tropical linear spaces in \S\ref{subsection:projtroplin}, and prove \Cref{thm:main2}, which characterizes projective tropical linear spaces in many different ways.  We assume familiarity with matroids.  We point to \cite{Wel76, Oxl11} as references.

\begin{notation}
We adopt the following notations for a matroid $M$ on a ground set $[n]$:
\begin{itemize}
\item $\bb(M)$ is the set of bases, which we will often view as a point configuration $(\bb(M), \be)$, where $B\in \bb(M) \subset {\binom{[n]}{r}}$ maps to $\be_B\in \RR^{[n]}$,
\item $\cc(M)$ is the set of circuits.
\item $\operatorname{rk}_M: 2^{[n]} \to \ZZ$ is the rank function.
\item $Q(M) := \operatorname{Conv}(\be_B \mid B\in \bb(M)) \subset \RR^{[n]}$ the base polytope of $M$, which as a point configuration is identical to $(\mathcal B(M), \be)$ because $Q(M)$ has no non-vertex lattice points.
\item $M^*$ is the dual matroid of $M$.
\item $M|_S$ (resp.\ $M/_S$) is the restriction (resp.\ contraction) of $M$ to (resp.\ by) a subset $S\subseteq [n]$.
\end{itemize}
\end{notation}

As it is customary in matroid theory, we write $S\cup i$ to mean $S\cup \{i\}$ and $S\setminus i$ to mean $S\setminus \{i\}$ for a set $S$ and an element $i$.  We will often use the following.

\begin{thm}\cite{GGMS87}\label{thm:GGMS} A lattice polytope contained in the cube $\operatorname{Conv}(\be_S \mid \emptyset \subsetneq S \subseteq [n]) \subset \RR^{[n]}$ is a generalized permutohedron if and only if it is a base polytope of a matroid.
\end{thm}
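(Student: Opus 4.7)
I would argue the equivalence in two directions. For the easier direction, if $M$ has rank $r\geq 1$, the vertices of $Q(M)=\operatorname{Conv}(\be_B\mid B\in\bb(M))$ are nonzero $0/1$ vectors of common coordinate sum $r$, so $Q(M)$ lies in the cube. To show every edge is parallel to some $\be_i-\be_j$, suppose for contradiction that adjacent vertices $\be_B,\be_{B'}$ satisfied $|B\triangle B'|\geq 4$. Pick $i\in B\setminus B'$ and apply basis exchange to get $j\in B'\setminus B$ with $B_1:=(B\setminus i)\cup j\in\bb(M)$; then apply the symmetric exchange property of matroids to get $B_2:=(B'\setminus j)\cup i\in\bb(M)$. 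Since $\be_{B_1}+\be_{B_2}=\be_B+\be_{B'}$, the midpoint of $[\be_B,\be_{B'}]$ equals that of $[\be_{B_1},\be_{B_2}]$. Because the edge $[\be_B,\be_{B'}]$ lies on a supporting hyperplane $H$ of $Q(M)$ and $\be_{B_1},\be_{B_2}\in Q(M)$ have midpoint on $H$, both must lie on $H$ and hence on the edge itself---contradicting that $\be_{B_1},\be_{B_2}$ are vertices of $Q(M)$ distinct from $\be_B,\be_{B'}$.

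For the reverse direction, let $Q$ be a generalized permutohedron in the cube and set $\bb:=\{B\subseteq[n]\mid \be_B\text{ is a vertex of }Q\}$, a collection of nonempty subsets. I would verify that $\bb$ satisfies the matroid basis axioms. Uniform cardinality is immediate: every edge direction $\be_i-\be_j$ is orthogonal to $\mathbf 1$, and the edge graph of any polytope is connected, so all vertices share a common coordinate sum $r$, giving $|B|=r$ for every $B\in\bb$.

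The main step is basis exchange: given $B,B'\in\bb$ and $i\in B\setminus B'$, I must produce $j\in B'\setminus B$ with $(B\setminus i)\cup j\in\bb$. My strategy is to first restrict to the face
\[
F := Q\cap \bigcap_{j\in[n]\setminus(B\cup B')}\{x_j=0\},
\]
each $\{x_j=0\}$ being a supporting hyperplane of $Q\subseteq[0,1]^n$. Then $F$ is a generalized permutohedron containing $\be_B,\be_{B'}$, whose vertices are supported on $B\cup B'$. Every edge of $F$ from $\be_B$ therefore has direction $\be_k-\be_l$ with $l\in B$ and $k\in(B\cup B')\setminus B=B'\setminus B$, its other endpoint being $\be_{(B\setminus l)\cup k}$, so that $(B\setminus l)\cup k\in\bb$. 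The tangent cone of $F$ at $\be_B$ is generated by these edge directions, and $\be_{B'}-\be_B$ lies in it: write $\be_{B'}-\be_B=\sum_e\lambda_e(\be_{k_e}-\be_{l_e})$ with $\lambda_e\geq 0$. Comparing $\be_i$-coefficients, the left-hand side contributes $-1$ (since $i\in B\setminus B'$) while the right-hand side equals $-\sum_{e:\,l_e=i}\lambda_e$ (since every $k_e\in B'\setminus B$ is distinct from $i$). Hence some edge has $l_e=i$ with $\lambda_e>0$, giving the required exchange $j:=k_e\in B'\setminus B$.

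I expect the basis-exchange step to be the main obstacle: a naive tangent-cone decomposition at $\be_B$ inside the full polytope $Q$ would only guarantee an edge direction $\be_k-\be_i$ with $k\in[n]\setminus B$, which might not lie in $B'\setminus B$. Restricting to the face $F$ supported on $B\cup B'$ is the crucial simplification that constrains every incident edge direction to have its ``positive'' index in $B'\setminus B$, making the coefficient comparison work.
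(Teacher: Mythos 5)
The paper does not prove this statement; it cites \cite{GGMS87} and treats it as known. Your argument is a correct self-contained proof of the GGMS theorem, and it is essentially the standard one, so I will just flag one phrasing issue and one omission rather than compare approaches.

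In the forward direction, the way you invoke exchange is slightly off: you first ``apply basis exchange to get $j$'' making $B_1=(B\setminus i)\cup j$ a basis, and \emph{then} say symmetric exchange gives $B_2=(B'\setminus j)\cup i\in\bb(M)$. Symmetric exchange does not promote an arbitrary $j$ from plain exchange; it asserts the existence of \emph{some} $j\in B'\setminus B$ for which both $B_1$ and $B_2$ are simultaneously bases, and that is the $j$ you must take. With that one-line fix the midpoint argument is airtight (a primitive segment between two lattice points contains no further lattice points, and $\be_B-\be_{B'}$ is primitive), and indeed $\be_{B_1},\be_{B_2}$ would be extra vertices of $Q(M)$ on the edge, a contradiction.

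The reverse direction is correct and the key move, restricting to the face $F=Q\cap\bigcap_{j\notin B\cup B'}\{x_j=0\}$ before taking the tangent cone, is exactly what is needed; your coefficient comparison at coordinate $i$ then forces an edge with lower index $i$, completing exchange. One small point worth making explicit: since $Q$ is a lattice polytope contained in $\operatorname{Conv}(\be_S\mid\emptyset\subsetneq S\subseteq[n])$, every vertex of $Q$ is a $0/1$ vector distinct from $\mathbf 0$, so $\bb$ is a nonempty collection of nonempty sets; together with the equicardinality you deduce from connectedness of the edge graph and the exchange axiom, $\bb$ is the basis family of a matroid $M$, and $Q=\operatorname{Conv}(\be_B\mid B\in\bb)=Q(M)$ because $Q$ is the convex hull of its vertices.
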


\subsection{Dressians and valuated matroids}\label{subsection:dressians}

\medskip
We review Dressians and valuated matroids. As before, the underlying algebraic geometry is explained in the remarks.

\begin{defn}
For $0\leq r \leq n$, the \textbf{tropical Grassmann-Pl\"ucker relations} are tropical polynomials in variables $\{P_I \mid I \in \binom{[n]}{r}\}$ defined as
\begin{equation}\label{eqn:tropGP}\tag{GP}
\mathscr {P}_{r;n}^{\trop} := \left\{ (P_I\odot P_J) \oplus \bigoplus_{j\in J\setminus I} (P_{I\setminus i\cup j} \odot P_{J\setminus j \cup i}) \ \middle | \ I,J\in \binom{[n]}{r},\ |I\cap J| < r-1,\  i \in I\setminus J\right\}.
\end{equation}
The \textbf{Dressian (of rank $r$ in $[n]$)} is the projective tropical prevariety of these tropical Grassmann-Pl\"ucker relations.  That is, we define 
\[
Dr(r;n) := \overline\trop(\mathscr P_{r;n}^{\trop}) \subset \PP(\TT^{\binom{[n]}{r}}).
\]
\end{defn}

Points on Dressians were previously described in several ways \cite{speyer,MS15,tropplanes}; we collect them together in \Cref{thm:Dressians}.  Let us first recall the definition of valuated matroids from \cite{dress}.

\begin{defn}
Let $M$ be a matroid of rank $r$ on $[n]$.  A \textbf{valuated matroid} with underlying matroid $M$ is a function $\mu: \mathcal B(M) \to \RR$ such that for every $B,B'\in \mathcal B(M)$ and $i\in B\setminus B'$ there exists $j\in B'\setminus B$ satisfying
\[
\mu(B) + \mu(B') \geq \mu(B \setminus i \cup j) + \mu(B' \setminus j \cup i).
\]
\end{defn}

\begin{thm}\label{thm:Dressians}
Let $\mu\in \TT^{\binom{[n]}{r}}$.  Then the following are equivalent:
\begin{enumerate}[label = (\alph*)]
\item  The image $\overline\mu \in \PP(\TT^{\binom{[n]}{r}})$ is a point of $Dr(r;n)$.
\item $\mu$ is a valuated matroid with an underlying matroid of rank $r$ on $[n]$.\item When $\mu$ is regarded as a weight on $\{\be_I \in \RR^{[n]} \mid \mu(I) \neq \infty\}$, the faces of $\Delta_\mu$ are base polytopes of matroids.
\end{enumerate}
\end{thm}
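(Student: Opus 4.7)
My plan is to establish (b)$\iff$(c) via the M-convex framework of \S\ref{subsec:mconvex}, and establish (a)$\iff$(b) by direct combinatorial analysis of the tropical Grassmann--Pl\"ucker relations, drawing on the classical sources \cite{speyer, tropplanes, MS15, dress}.

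For (b)$\iff$(c), identify each basis $B \in \binom{[n]}{r}$ with its indicator vector $\be_B \in \{0,1\}^{[n]}$. A function $\mu : \bb(M) \to \RR$ extended by $\infty$ to $\ZZ^{[n]}$ has effective domain inside the vertex set of the hypercube, and in this setting the valuated matroid exchange axiom is exactly the M-convexity axiom \eqref{eqn:Mconvex} restricted to such functions: taking $\mathbf a = \be_B, \mathbf b = \be_{B'}$, the condition $a_i > b_i$ becomes $i \in B \setminus B'$, the condition $a_j < b_j$ becomes $j \in B' \setminus B$, and the exchange inequality matches verbatim. Hence (b) is equivalent to $\mu$ being M-convex, which by \Cref{thm:Mconvex}(2) is equivalent to the faces of $\Delta_\mu$ being lattice points of generalized permutohedra. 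Since these faces sit inside the hypercube, \Cref{thm:GGMS} identifies them with matroid base polytopes, yielding (c). Running this chain in reverse gives (c)$\Rightarrow$(b).

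For (a)$\iff$(b), the direction (a)$\Rightarrow$(b) is relatively direct: the three-term tropical GP relations (the case $|I \cap J| = r-2$) are precisely the ``min twice'' form of the symmetric exchange inequality, so they immediately produce the witness $j$ required by the valuated matroid exchange axiom, and the general (pairwise) exchange follows from the three-term case via standard matroid arguments. For the converse (b)$\Rightarrow$(a), I would route through (c) and the Minkowski sum perspective of \S\ref{subsec:mixedsub}: all terms in the tropical GP relation for $(I,J,i)$ index pairs $(B_1, B_2) \in \bb(M)^2$ with $\be_{B_1} + \be_{B_2} = \be_I + \be_J$, so by \Cref{lem:Minkowskifaces} they lie in a common fiber of the Minkowski-sum dual complex of $\mu + \mu$, whose corresponding face is a sum of two faces of $\Delta_\mu$. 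Since matroid base polytope edges lie in $\be_k - \be_l$ directions, a ``min achieved exactly once'' scenario on the three-term relations would force a forbidden edge in this face, yielding ``min twice''. The main obstacle I expect is precisely this edge-analysis, because a naive iteration of the valuated matroid axiom alone only produces a descending chain of inequalities rather than an equality of two minimizers, so the subdivision/octahedral viewpoint from (c) appears essential for a clean argument.
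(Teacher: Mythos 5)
Your plan for (b)$\iff$(c) matches the paper's: both pass through the M-convex/generalized permutohedron bridge (the identification of the valuated matroid axiom with M-convexity on $\{0,1\}$-supports, then \Cref{thm:Mconvex}(2) and \Cref{thm:GGMS}). That part is correct.

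Where you deviate — and where you misdiagnose the difficulty — is (a)$\iff$(b). The paper's ``one can check from the definitions'' refers to a short direct argument that you have discounted. For (a)$\Rightarrow$(b), you don't need to restrict to the three-term case $|I\cap J|=r-2$ and then invoke a local-to-global theorem: (a) already gives you the tropical GP relation for \emph{every} $(I,J,i)$ with $|I\cap J|<r-1$, and the ``min achieved at least twice'' condition on that relation directly forbids $\mu(I)+\mu(J)$ from being the strict unique minimizer, which is precisely the existence of the witness $j$. (The cases $|I\cap J|\geq r-1$ of the exchange axiom are vacuous or trivial.) For (b)$\Rightarrow$(a), your claim that ``a naive iteration of the valuated matroid axiom alone only produces a descending chain'' is incorrect, and the detour through (c) with a Minkowski-sum edge analysis is both vague and unnecessary. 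The direct argument: suppose the minimum in the GP expression for $(I,J,i)$ is finite and achieved uniquely. If uniquely at $\mu(I)+\mu(J)$, the exchange axiom for $(I,J,i)$ produces a $j$ with $\mu(I)+\mu(J) \geq \mu(I\setminus i\cup j)+\mu(J\setminus j\cup i)$, contradicting strict uniqueness. If uniquely at $\mu(I')+\mu(J')$ with $I'=I\setminus i\cup j_0$, $J'=J\setminus j_0\cup i$ (both bases, as the value is finite), observe $j_0\in I'\setminus J'$ and $J'\setminus I' = \{i\}\cup\big((J\setminus I)\setminus j_0\big)$; apply the exchange axiom to $(I',J',j_0)$. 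The witness $k$ is either $i$, giving $\mu(I')+\mu(J')\geq \mu(I)+\mu(J)$, or some other $k\in J\setminus I$, giving $\mu(I')+\mu(J')\geq \mu(I\setminus i\cup k)+\mu(J\setminus k\cup i)$; either way you contradict uniqueness of the minimizer. This single application of the axiom to a shifted pair — not an iteration — is the whole content of the paper's ``compare definitions'' step, and your proposed route through (c), besides being considerably heavier, risks circularity since your justification of (c) itself passes through M-convexity.
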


\begin{proof}
Let us consider $\mu\in\TT^{\binom{[n]}{r}}$ as a function $\mu: \ZZ^{[n]} \to \TT$ where
\[
\bv \mapsto \begin{cases} \mu(I) & \textnormal{if } \bv = \be_I \textnormal{ for some }I\in \binom{[n]}{r}\\
\infty & \textnormal{otherwise}.
\end{cases}
\]
One can check from the definitions that $\mu$ is M-convex if and only if the image $\overline\mu \in \PP(\TT^{\binom{[n]}{r}})$ lies in $Dr(r;n)$.  The equivalence of (a) and (b) now follows by comparing the definitions of M-convexity and valuated matroids.  The equivalence of (b) and (c) follows from \Cref{thm:Mconvex}.(2) and \Cref{thm:GGMS}.
\end{proof}

For a valuated matroid $\mu$ with underlying matroid $M$, we will freely switch between considering $\mu$ as a point on $\TT^{\binom{[n]}{r}}$, as an M-convex function with effective domain $Q(M)$, and as a weight on the point configuration $(\mathcal B(M),\be)$.

\begin{defn}
Recall the notation $T_S := \RR^S/\RR\mathbf 1 \times \{\infty\}^{E\setminus S} \subset \PP(\TT^E)$ for sets $S\subseteq E$.  For $M$ a matroid of rank $r$ on $[n]$, the \textbf{Dressian of $M$}, denoted $Dr(M)$, is the intersection
\[
Dr(M) = Dr(r;n) \cap T_{\mathcal B(M)} \subset \PP\big(\TT^{\binom{[n]}{r}}\big).
\]
\end{defn}

By \Cref{thm:Dressians}, the set $Dr(M)$, which was introduced in \cite{tropplanes}, parametrizes valuated matroids with underlying matroid $M$, or equivalently, weights $\mathcal B(M)\to \RR$ that induce coherent subdivisions of $Q(M)$ into base polytopes of matroids.  By \Cref{rem:usualtrop}, the set $Dr(M)$ is the usual tropical prevariety in $\RR^{\mathcal B(M)}/\RR\mathbf 1$ of appropriately modified tropical Grassmann-Pl\"ucker relations.

\medskip
Many aspects of matroids extend to valuated matroids.  We will use the following notions.

\begin{defn}\label{defn:valuatedcircuits}
Let $\mu$ be a valuated matroid of rank $r$ on $[n]$ with underlying matroid $M$.  
\begin{itemize}
\item For each $S\in \binom{[n]}{r+1}$, define an element $\mathbf C_\mu(S) \in \TT^{[n]}$ by
\[
C_\mu(S)_i :=
\begin{cases} \mu(S\setminus i) & i\in S\\
\infty & i\notin S.
\end{cases}
\]
Then the set of \textbf{valuated circuits} of $\mu$ is defined as
\[
\mathcal C(\mu) := \left\{ \mathbf C_\mu(S) \ \middle | \ S \in \textstyle \binom{[n]}{r+1}\right\} \setminus \{(\infty, \ldots, \infty)\}.
\]
\item The \textbf{dual} of $\mu$ is the valuated matroid $\mu^*$ defined by setting $\mu^*([n]\setminus I) := \mu(I)$ for $I\in \binom{[n]}{r}$.
\item The \textbf{valuated cocircuits} of $\mu$ are defined as the circuits of $\mu^*$.  Explicitly, the set of valuated cocircuits is
\[
\mathcal C^*(\mu) = \left\{\mathbf{C}_\mu^*(S) \ \middle | \ S\in \textstyle \binom{[n]}{r-1}\right\} \setminus \{(\infty, \ldots, \infty)\},
\]
where
\[
C_\mu^*(S)_i:=
\begin{cases} \mu({S\cup i}) & i\not\in S\\
\infty & i\in S.
\end{cases}
\]
\item For a nonempty subset $S\subseteq [n]$, the \textbf{restriction to $S$} (resp.\ \textbf{contraction by $S$}) \textbf{of} $\mu$ is $\mu|_S$ (resp.\ $\mu/_S$), where $\mu|_S$ and $\mu/_S$ are as in \Cref{lem:Mconvexhopf}. \Cref{lem:Mconvexhopf}, combined with \Cref{thm:GGMS}, implies that these are valuated matroids.
\end{itemize}
\end{defn}

The following facts are easy to verify:
\begin{itemize}
\item The set $\{\operatorname{supp}(\mathbf C) \mid \mathbf C \in \mathcal C(\mu)\}$ is the set of circuits of $M$. (Recall the notation $\operatorname{supp}(\mathbf C) := \{i\in [n] \mid C_i \neq \infty\}$ for $\mathbf C \in \TT^{[n]}$).
\item The underlying matroid of $\mu^*$ is $M^*$, and $(\mu^*)^* = \mu$.
\item The underlying matroid of $\mu|_S$ (resp.\ $\mu/_S$) is $M|_S$ (resp.\ $M/_S$).
\end{itemize}

Lastly, we will need the following description of the valuated circuits in terms of restrictions and contractions.  It is a consequence of \cite[Theorem 3.29 \& Corollary 4.10.(1)]{BB19}.

\begin{thm}\label{thm:circuitsvalmat}
Let $\mu$ be a valuated matroid of rank $r$ on $[n]$, and $S\subseteq [n]$ a nonempty subset.  Then, we have
\[
\begin{split}
\cc(\mu|_S) &= \{\mathbf C \in \cc(\mu) \mid \operatorname{supp}(\mathbf C) \subseteq S\}, \textnormal{ and }\\
\cc(\mu/_S) &= \{\textnormal{points of }\mathbb T^{[n]\setminus S}\textnormal{ of minimal support among }\{ (C_i)_{i\in [n]\setminus S} \mid \mathbf C\in \cc(M)\}\}.
\end{split}
\]
Moreover, the two operations are dual to each other, in the sense that $(\mu/_S)^* = \mu^*|_{([n]\setminus S)}$.
\end{thm}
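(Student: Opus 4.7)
The plan is to leverage the factorization $\mu^{\be_{[n]\setminus S}} = \mu|_S \times \mu/_S$ from \Cref{lem:Mconvexhopf} together with the defining formula $C_\mu(T)_i = \mu(T\setminus i)$ for valuated circuits (\Cref{defn:valuatedcircuits}). All equalities of valuated circuits below are understood in $\PP(\TT^{[n]})$, i.e.\ up to tropical scaling, since $\mu|_S$ and $\mu/_S$ are only defined up to a global additive constant.

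For the restriction statement, write $r' = \operatorname{rk}_M(S)$ and normalize $\mu|_S,\mu/_S$ so that $\mu(B_1 \cup B_2) = \mu|_S(B_1) + \mu/_S(B_2)$ whenever $B_1 \in \bb(M|_S)$ and $B_2 \in \bb(M/_S)$; fix any basis $I$ of $M/_S$. For the inclusion $\supseteq$: given $T_0 \in \binom{S}{r'+1}$, set $T := T_0 \sqcup I$. For $i \in T_0$, the set $T\setminus i = (T_0\setminus i) \sqcup I$ is a basis of $M$ iff $T_0\setminus i\in\bb(M|_S)$, in which case $\mu(T\setminus i) = \mu|_S(T_0\setminus i) + \mu/_S(I)$; for $j\in I$, the intersection $(T\setminus j)\cap S = T_0$ has size $r'+1 > r'$, forcing $T\setminus j\notin \bb(M)$. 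Hence $\mathbf C_\mu(T) = \mathbf C_{\mu|_S}(T_0) \odot \mu/_S(I)$, a valuated circuit of $\mu$ supported in $S$. For the inclusion $\subseteq$: given $\mathbf C_\mu(T)\in\cc(\mu)$ with support in $S$, the support equals the unique $M$-circuit inside $T$, which lies in $S$ and is therefore a circuit of $M|_S$; extend this circuit to some $T_0\in\binom{S}{r'+1}$ and apply the forward direction to $T_0$. The resulting valuated circuit $\mathbf C_\mu(T_0 \sqcup I)$ has the same support as $\mathbf C_\mu(T)$, and the standard uniqueness of valuated circuits on a given support (up to tropical scaling), which follows from the M-convex exchange axiom, identifies the two.

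The contraction statement is symmetric: for a circuit $D$ of $M/_S$ extended to $T \in \binom{[n]\setminus S}{r - r' + 1}$ and a basis $B_1$ of $M|_S$, the same factorization argument applied to $\mathbf C_\mu(B_1 \sqcup T)$ shows that its restriction to the $[n]\setminus S$-coordinates equals $\mathbf C_{\mu/_S}(T)\odot\mu|_S(B_1)$, while the $S$-coordinates are all $\infty$. Combined with the classical matroid fact $\cc(M/_S) = \{\text{minimal nonempty sets among } C\setminus S : C \in\cc(M)\}$, this yields the claimed characterization. The duality $(\mu/_S)^* = \mu^*|_{[n]\setminus S}$ follows from two applications of \Cref{lem:Mconvexhopf}: the face $(\mu^*)^{\be_S}$ factors as $\mu^*|_{[n]\setminus S}\times\mu^*/_{[n]\setminus S}$, while the basis-complement involution $B\leftrightarrow [n]\setminus B$ identifies this face with $\mu^{\be_{[n]\setminus S}} = \mu|_S\times\mu/_S$ after dualizing each factor; uniqueness of the factorization then pins down the identification. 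The main technical obstacle I anticipate is carefully bookkeeping the normalization constants to make every asserted equality well-posed in $\PP(\TT^{[n]})$.
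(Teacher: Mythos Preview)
The paper does not actually prove \Cref{thm:circuitsvalmat}; it records the result as a consequence of \cite[Theorem~3.29 \& Corollary~4.10.(1)]{BB19}. Your proposal is therefore an attempt at a self-contained argument using only the factorization of \Cref{lem:Mconvexhopf}, which goes beyond what the paper provides. Your treatment of the restriction statement and of the duality $(\mu/_S)^*=\mu^*|_{[n]\setminus S}$ is correct.

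The contraction argument has one harmless error and one genuine gap. The error: your claim that ``the $S$-coordinates are all $\infty$'' for $\mathbf C_\mu(B_1\sqcup T)$ is false in general. Take $M=U_{2,3}$, $S=\{1\}$, $B_1=\{1\}$, $T=\{2,3\}$; then $(B_1\sqcup T)\setminus 1=\{2,3\}\in\bb(M)$, so the $1$-coordinate is finite. This is irrelevant to the statement, which only concerns the projection to $[n]\setminus S$, so no harm is done. The gap: you establish that every element of $\cc(\mu/_S)$ arises as the $[n]\setminus S$-projection of some valuated circuit of $\mu$, and you note that circuits of $M/_S$ are exactly the minimal nonempty supports among such projections. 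But this only gives one inclusion. For the converse you must show that if $\mathbf C\in\cc(\mu)$ has projection to $[n]\setminus S$ of minimal support $D\in\cc(M/_S)$, then that projection agrees up to scaling with the $\mathbf C_{\mu/_S}(T)$ you constructed. The ``uniqueness of valuated circuits on a given support'' you invoked for restriction does not apply directly: there you had two circuits of $\mu$ with the \emph{same} support, whereas here you have two circuits of $\mu$ whose \emph{projections} share support $D$ but whose full supports in $[n]$ may differ. Closing the gap requires a short valuated-circuit elimination: scale the two circuits to agree at some $j_0\in D$; if their projections differ at some $j_1\in D$, eliminate $j_0$ while retaining $j_1$ to obtain a third circuit of $\mu$ whose projection to $[n]\setminus S$ has nonempty support strictly contained in $D$, contradicting minimality. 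With this step added, your direct argument goes through.
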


The underlying geometry behind the definition of Dressians follows.

\begin{rem}\label{rem:geomDr}
See \cite[\S9]{Ful97} for Pl\"ucker embeddings, and see Remarks \ref{rem:projtrophyper} and \ref{rem:projtrop} for tropicalizations of projective subvarieties.  The Grassmannian $Gr(r;n)$, whose points are $r$-dimensional subspaces of $\kk^{[n]}$, is embedded in $\PP(\kk^{\binom{[n]}{r}})$ by the Pl\"ucker embedding.  When $\operatorname{char}\kk = 0$, the defining ideal 
is generated by the Grassmann-Pl\"ucker relations:
\begin{equation}\label{eqn:plucker1}
\mathscr P_{r;n} := \left\{\left.-P_IP_J + \sum_{j\in J\setminus I} \operatorname{sign}(i,j,I,J) P_{I\setminus i \cup j}P_{J\setminus j \cup i} =0 \ \right| \ I,J \in \binom{[n]}{r},\ i\in I\setminus J\right\}
\end{equation}
where $\operatorname{sign}(i,j,I,J) := (-1)^{\#\{a\in I \ | \ \min(i,j)<a<\max(i,j)\}+\#\{b\in J \ | \ \min(i,j)<b<\max(i,j)\}}$.
When $\operatorname{char}\kk >0$ they generate the ideal up to radical.  The tropical Grassmann-Pl\"ucker relations are tropicalizations of these polynomials.  That is, we have $\mathscr P_{r;n}^{\trop} = \{ f^{\trop} \mid f\in \mathscr P_{r;n}\}$.  The projective tropicalization $\overline\trop(Gr(r;n))$ is thus a subset of $Dr(r;n)$.

The inclusion $\overline\trop(Gr(r;n)) \subseteq Dr(r;n)$ is often strict, precisely because not all valuated matroids are realizable in the following sense:
For a linear subspace $L\in Gr(r;n)$, let $(P_I(L))_{I\in \binom{[n]}{r}}\in \PP(\kk^{\binom{[n]}{r}})$ be its coordinates in the Pl\"ucker embedding.  Then the function $I\mapsto \operatorname{val}(P_I(L)) \ \forall I\in \binom{[n]}{r}$ is a valuated matroid, denoted $\mu(L)$, whose underlying matroid is denoted $M(L)$.  Valuated matroids arising in this way are said to be \textbf{realizable (over $\kk$)}.  The points of $\overline\trop(Gr(r;n))$ are exactly the valuated matroids realizable over $\kk$.  When $r\geq 3$ and $n\geq 7$, there are non-realizable valuated matroids, and hence, in these cases the inclusion $\overline\trop(Gr(r;n)) \subseteq Dr(r;n)$ is strict. Realizability can fail in many ways. For example, there are valuated matroids where every cell of the induced subdivision is a realizable matroid, but but the valuated matroid is not realizable \cite{speyerthesis}.
\end{rem}

\subsection{The many faces of projective tropical linear spaces}\label{subsection:projtroplin}

We introduce projective tropical linear spaces, and prove \Cref{thm:main2}, which characterizes them in many different ways.  We start by reviewing usual tropical linear spaces.

\begin{prop}\cite[Lemma 4.4.7]{MS15} \label{prop:troplin}
Let $\mu$ be a valuated matroid on $[n]$.  The following two subsets of $\RR^{[n]}/\RR\mathbf 1$ coincide:
\begin{enumerate}
\item The set
\[
\bigcap_{\mathbf C \in \mathcal C(\mu)} \left\{ \overline\bu \in \RR^{[n]}/\RR\mathbf 1 \ \middle | \ \textnormal{the minimum in } \{C_i + u_i\}_{i\in \operatorname{supp}(\mathbf C)} \textnormal{ is achieved at least twice}\right\},
\]
which is the usual tropical prevariety of the valuated circuits of $\mu$ (\Cref{rem:usualtrop}).
\item With $\mu$ regarded as a weighted point configuration, the set
\[
\{ \overline\bu \in \RR^{[n]}/\RR\mathbf 1 \mid \Delta_{\mu}^{-\overline\bu} \textnormal{ is a base polytope of a loopless matroid}\}.
\]
which is the union of "loopless cells" of the dual complex $\Sigma_{\mu}$ in $\RR^{[n]}/\RR\mathbf 1$.
\end{enumerate}
\end{prop}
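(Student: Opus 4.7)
The plan is to first reduce to the base case $\overline\bu = \overline{\mathbf 0}$ by tropical rescaling, and then prove each direction using a single application of the (valuated or ordinary) basis exchange axiom, with the key auxiliary basis supplied by looplessness.

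For the reduction, set $\mu_{-\bu}(B) := \mu(B) - \langle \bu, \be_B\rangle$. Direct checks from the definitions show that $\mu_{-\bu}$ is again a valuated matroid with underlying matroid $M$, that $\Delta_{\mu_{-\bu}}^{\overline{\mathbf 0}} = \Delta_\mu^{-\overline\bu}$, and that the tropical circuit condition at $\overline\bu$ for $\mu$ matches the condition at $\overline{\mathbf 0}$ for $\mu_{-\bu}$ up to a global constant on each support; so it suffices to prove the equivalence at $\overline\bu = \overline{\mathbf 0}$. Writing $m := \min_B \mu(B)$, \Cref{thm:GGMS} and \Cref{thm:Dressians} produce a matroid $M'$ whose base polytope is $\Delta_\mu^{\overline{\mathbf 0}} = \{B \in \mathcal B(M) : \mu(B) = m\}$, and we must show that $M'$ is loopless if and only if every valuated circuit attains its minimum at least twice.

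For $(2) \Rightarrow (1)$, fix a valuated circuit $\mathbf{C}_\mu(S)$, and let $i_0 \in \operatorname{supp}(\mathbf{C}_\mu(S))$ achieve the minimum of $\mu(S \setminus i)$ over the support, with $B_0 := S \setminus i_0$. Looplessness of $M'$ supplies $B^* \in \mathcal B(M')$ with $i_0 \in B^*$. The valuated exchange axiom applied to $(B^*, B_0)$ at $i_0 \in B^* \setminus B_0$ yields $j \in B_0 \setminus B^*$ with both $B^* \setminus i_0 \cup j$ and $S \setminus j = B_0 \setminus j \cup i_0$ bases, and $\mu(B^*) + \mu(B_0) \geq \mu(B^* \setminus i_0 \cup j) + \mu(S \setminus j)$. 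Since $\mu(B^*) = m \leq \mu(B^* \setminus i_0 \cup j)$ and $\mu(B_0) \leq \mu(S \setminus j)$, all three inequalities must be equalities; in particular $\mu(S \setminus j) = \mu(B_0)$ with $j \neq i_0$, producing the required second minimizer.

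For the contrapositive of $(1) \Rightarrow (2)$, suppose $i_0$ is a loop of $M'$. If $i_0$ is also a loop of $M$, then for any basis $B$ of $M$ the set $S := B \cup \{i_0\}$ satisfies $\operatorname{supp}(\mathbf{C}_\mu(S)) = \{i_0\}$, so the minimum of a single term cannot be attained twice. Otherwise, pick $B_0 \in \mathcal B(M')$ and $B_1 \in \mathcal B(M)$ with $i_0 \in B_1$; the classical symmetric basis exchange provides $k \in B_0 \setminus B_1$ with $B_0 \setminus k \cup i_0 \in \mathcal B(M)$. Setting $S := B_0 \cup \{i_0\}$, both $S \setminus i_0 = B_0$ and $S \setminus k$ lie in $\operatorname{supp}(\mathbf{C}_\mu(S))$, and $\mu(B_0) = m$ is the minimum over the support. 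A second minimizer $i^* \neq i_0$ would force $S \setminus i^* \in \mathcal B(M')$ containing $i_0$, contradicting the loop hypothesis. The main obstacle is selecting the correct auxiliary data: in the forward direction one must choose $B^*$ to contain the particular candidate loop $i_0$, and in the backward direction one must build $S$ from a minimizer $B_0$ rather than from $B_1$ so that the global minimum $m$ propagates into the chosen circuit; once these choices are made, the valuated exchange axiom handles all the numerical work uniformly, bypassing any case analysis on whether $\mu(B_0)$ equals or exceeds $m$.
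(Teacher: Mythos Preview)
The paper does not supply its own proof of this proposition; it is simply quoted from \cite[Lemma 4.4.7]{MS15}. Your argument is correct and is essentially the standard one: the reduction to $\overline{\mathbf 0}$ via $\mu_{-\bu}$ is precisely the paper's \Cref{lem:linearshift}, and each direction is settled by a single application of the valuated basis exchange axiom, just as in the Maclagan--Sturmfels proof. One minor redundancy: in Case~2 of the contrapositive you invoke symmetric basis exchange to produce $k$, but this is not needed---once $S = B_0 \cup \{i_0\}$, the support of $\mathbf C_\mu(S)$ is the fundamental circuit of $i_0$ over $B_0$, which already has size $\geq 2$ since $i_0$ is not a loop of $M$, and your final sentence rules out any second minimizer $i^* \neq i_0$ regardless of how large the support is.
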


\begin{defn}
Let $\mu$ be a valuated matroid of rank $r$ on $[n]$, and let $M$ be its underlying matroid.  The subset of $\RR^{[n]}/\RR\mathbf 1$ in the previous proposition is defined as the \textbf{tropical linear space of $\mu$}, denoted $\trop(\mu)$.  Note that if $M$ has loops, then $\trop(\mu) = \emptyset$.
\end{defn}

We will extend \Cref{prop:troplin} to projective tropical linear spaces.  Since projective tropical linear spaces are subsets of $\PP(\TT^{[n]})$, the negative sign $-\overline\bu$ in \Cref{prop:troplin}.(2) can be problematic because $-\infty$ is not an element of $\TT$.  We will thus use the following reformulation:

\begin{samepage}
\begin{lem}\label{lem:coloopless}
Let $\mu$ be a valuated matroid, and $\mu^*$ its dual.  Then we have
\[
\trop(\mu) = \{\overline\bu \in \RR^{[n]}/\RR\mathbf 1 \mid \Delta_{\mu^*}^{\overline\bu} \textnormal{ is a base polytope of a coloopless matroid}\},
\]
which is the union of "coloopless cells" of the dual complex $\Sigma_{\mu^*}$.
\end{lem}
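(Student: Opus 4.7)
The plan is to deduce this directly from \Cref{prop:troplin} by transporting the characterization through matroid duality. The whole argument will hinge on a single computation relating $\Delta_{\mu}^{-\overline\bu}$ and $\Delta_{\mu^*}^{\overline\bu}$.

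First, I would record the fundamental duality identity at the level of weighted point configurations: by the definition $\mu^*([n]\setminus I) = \mu(I)$, the affine involution $\bv \mapsto \be_{[n]} - \bv$ sends $Q(M)$ bijectively to $Q(M^*)$ and carries the weight $\mu$ to the weight $\mu^*$. Using this, I would compute that for any $\overline\bu \in \RR^{[n]}/\RR\mathbf 1$ and any basis $B$ of $M$,
\[
\mu^*([n]\setminus B) + \langle \bu, \be_{[n]\setminus B}\rangle = \mu(B) + \langle \bu, \be_{[n]}\rangle - \langle \bu, \be_B\rangle,
\]
so minimizing the left-hand side over $B \in \bb(M)$ is equivalent (after dropping the constant $\langle \bu,\be_{[n]}\rangle$) to minimizing $\mu(B) + \langle -\bu, \be_B\rangle$. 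Therefore the faces $\Delta_{\mu^*}^{\overline\bu}$ and $\Delta_\mu^{-\overline\bu}$ are related by the complementation bijection $B \leftrightarrow [n]\setminus B$.

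Next I would invoke \Cref{thm:Dressians} (equivalently, M-convexity of $\mu$ together with \Cref{thm:GGMS}) to conclude that $\Delta_{\mu^*}^{\overline\bu}$ is the base polytope of some matroid $N$. The complementation bijection from the previous step then identifies $\Delta_\mu^{-\overline\bu}$ with the base polytope of $N^*$. Since the loops of $N^*$ are precisely the coloops of $N$, the matroid $N$ is coloopless if and only if $N^*$ is loopless.

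Combining these, the condition that $\Delta_{\mu^*}^{\overline\bu}$ is a base polytope of a coloopless matroid is equivalent to the condition that $\Delta_\mu^{-\overline\bu}$ is a base polytope of a loopless matroid, and the latter characterizes $\trop(\mu)$ by \Cref{prop:troplin}. The whole argument is bookkeeping; the only real content is the sign-tracking in the display above, which is the single step I would be careful about, since an erroneous shift of $\overline\bu \leftrightarrow -\overline\bu$ or loop/coloop would derail everything.
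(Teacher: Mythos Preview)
Your proof is correct and follows essentially the same route as the paper's: the paper introduces the notation $w^{op}$ for the same weight on the negated point configuration, observes $\Delta_w^{-\overline\bu} = \Delta_{w^{op}}^{\overline\bu}$, and notes that $Q(M^*) = -Q(M) + \mathbf 1$ makes $\mu^*$ agree with $\mu^{op}$ up to a harmless translation. Your direct computation via the complementation bijection $B \leftrightarrow [n]\setminus B$ is exactly this argument unpacked, and your loop/coloop conclusion matches the paper's verbatim.
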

\end{samepage}

\begin{proof}
For a weight $w$ on a point configuration $(\mathcal A, \mathbf a)$, let us write $w^{op}$ for the weight on the point configuration $(\mathcal A, -\mathbf a)$, defined by $w^{op}(i) := w(i) \ \forall i\in \mathcal A$.  It is easy to verify that $\Delta_w^{-\overline\bu} = \Delta_{w^{op}}^{\overline\bu}$ as subsets of $\mathcal A$.
Now, if $M$ is the underlying matroid of $\mu$, then $Q(M^*) = -Q(M) + \mathbf 1$, so that $\mu^* = \mu^{op}$.  The lemma now follows from the description of $\trop(\mu)$ in \Cref{prop:troplin}.(2), since a matroid is loopless if and only if its dual matroid is coloopless.
\end{proof}


The following remark explains the geometry behind tropical linear spaces via tropicalizations of subvarieties (see \Cref{rem:projtrop}).  It also motivates our definition of projective tropical linear spaces.

\begin{rem}\label{geomtroplin}
Recall from \Cref{rem:geomDr} that a linear subspace $L\subset \kk^{[n]}$ defines a valuated matroid $\mu(L)$.  Let us consider $L$ as a linear projective subvariety of $\PP(\kk^{[n]})$, and write $\mathring L := L \cap (\kk^*)^{[n]}/\kk^*$.  Then the usual tropicalization $\trop(\mathring L)$ of $\mathring L$ is the tropical linear space $\trop(\mu(L))$.  When $L$ is contained in a coordinate hyperplane, or equivalently, when the matroid $M(L)$ has a loop, the intersection $\mathring L$ is empty, and hence $\trop(\mathring L)$ is empty, as is $\trop(\mu(L))$.  See \cite[\S4.3]{MS15} for a more details on tropicalizations of linear subvarieties in a torus $(\kk^*)^{[n]}/\kk^*$.

Now consider the projective tropicalization $\overline\trop(L)$.  For each nonempty subset $S\subseteq [n]$, the torus orbit $O_S$ intersects $L$ to give another (possibly empty) linear subvariety of $(\kk^*)^S/\kk^*$, denoted $\mathring L_S$.  Similarly, let $L_S := L \cap \overline{O_S}$, considered as a subvariety of $\PP(\kk^S)$.  Then the valuated matroid $\mu(L_S)$ is the contraction $\mu(L)/_{([n]\setminus S)}$.  We thus have $\overline\trop(L) \cap T_S = \trop(\mathring L_S) = \trop(\mu(L)/_{([n]\setminus S)})$.  This motivates our definition of projective tropical linear spaces.
\end{rem}

\begin{defn}\label{defn:troplin}
Let $\mu$ be a valuated matroid on $[n]$.  The \textbf{projective tropical linear space} $\overline\trop(\mu)$ of $\mu$ is a subset of $\PP(\TT^{[n]})$ defined by setting
\[
\overline{\trop}(\mu) \cap T_{[n]\setminus S} := \trop(\mu/_S)\times \{\infty\}^S
\]
for each $\emptyset \subseteq S \subsetneq [n]$.
\end{defn}

Projective tropical linear spaces have previously appeared in various forms (see \Cref{rem:previoustroplin}).
\Cref{thm:main2}, reproduced below, unifies them and adds two new characterizations (\ref{thm2r:pt3} and \ref{thm2r:pt5}).

\newtheorem*{thm:main2}{\Cref{thm:main2}}
\begin{thm:main2}
Let $\mu$ be a valuated matroid on a ground set $[n]$.  Let $\ell\subseteq [n]$ be the set of loops of its underlying matroid $M$.  The following sets in the tropical projective space $\PP(\TT^E)$ coincide:
\begin{enumerate}[label=(\roman*)]
\item \label{thm2r:pt1} The {projective tropical linear space} of $\mu$, i.e.\
\[
\overline{\trop}(\mu) := \displaystyle \bigcup_{\emptyset\subseteq S \subsetneq [n]} \Big( \trop(\mu/_S)\times \{\infty\}^S \Big) \subset \PP(\TT^E),
\]

\item \label{thm2r:pt2}The projective tropical prevariety of the valuated circuits of $\mu$, i.e.\
\[
\bigcap_{\tiny \begin{matrix} \textnormal{valuated}\\ \textnormal{circuits $\mathbf C$}\end{matrix}} \Big\{ \overline\bu \in \PP(\TT^{[n]}) \ \left| \ \textnormal{the minimum is achieved at least twice among } \{C_i + v_i\}_{i\in [n]}\Big\}\right.,
\]

\item \label{thm2r:pt3} The union of coloopless cells of the closure of the dual complex of $\mu^*$ in $\PP(\TT^{[n]})$, i.e.\
\[
\begin{split}
\Big\{\overline\bu\in \PP(\TT^{[n]}) \mid \Delta_{\mu^*}^{\overline\bu} \textnormal{ is a base polytope of a coloopless matroid}\Big\},
\end{split}
\]

\item \label{thm2r:pt4} The tropical span of the valuated cocircuits of $\mu$, i.e.\
\[
\left\{\left. \begin{matrix} \textnormal{the image in $\PP(\TT^E)$ of} \\
(a_1\odot \mathbf{C_1^*}) \oplus \cdots \oplus (a_l \odot \mathbf{C_m^*}) \in \TT^E
\end{matrix}
\ \right|\  \begin{matrix}
\mathbf{C_i^*}\in \TT^E \textnormal{ a valuated cocircuit of } \mu,\\
\ a_i \in \RR, \quad \forall 1\leq i \leq m
\end{matrix}
\right\},
\]

\item \label{thm2r:pt5}The closure of $\trop(\mu/_\ell) \times \{\infty\}^\ell$ inside $\PP(\TT^{[n]})$.
\end{enumerate}
\end{thm:main2}

\begin{rem}\label{rem:previoustroplin}  For ordinary matroids (not valuated), the description \ref{thm2r:pt1} appeared in \cite[Definition 2.20]{Sha13}.  The authors of \cite{MR} also considered the description \ref{thm2r:pt1}, and characterized $\overline{\trop}(\mu)$ as a tropical cycle of projective degree 1 \cite[Remark 7.4.15]{MR}.  In the language of hyperfields (see \cite{BB19}), the description \ref{thm2r:pt2} says that a projective tropical linear space is the set of covectors of a matroid over the tropical hyperfield.  This characterization appeared in \cite{MT01}, along with the proof of \ref{thm2r:pt2}$=$\ref{thm2r:pt4} \cite[Theorem 3.8]{MT01}, and was generalized to perfect tracts in \cite{And19}. 
\end{rem}

\begin{proof}[Proof of \Cref{thm:main2}] The equality \ref{thm2r:pt2} $=$ \ref{thm2r:pt4} is \cite[Theorem 3.8]{MT01}.  Recalling from \Cref{thm:circuitsvalmat} that the dual of $\mu/_S$ is $\mu^*|_{([n]\setminus S)}$, combining \Cref{thm:Mconvexdualcplx} with \Cref{lem:coloopless} then  implies \ref{thm2r:pt1} $=$ \ref{thm2r:pt3}.
We now show \ref{thm2r:pt3} $\subseteq$ \ref{thm2r:pt5} $\subseteq$ \ref{thm2r:pt2} $\subseteq$ \ref{thm2r:pt1}.

For all subsets $S\subseteq [n]$ such that $S\not\supset\ell$, the matroid $M/S$ has loops, and so the intersection of the set \ref{thm2r:pt3} with $T_{[n]\setminus S}$ is empty (since \ref{thm2r:pt3} $=$ \ref{thm2r:pt1}).  The same is true for the set \ref{thm2r:pt5}.  Hence, for showing \ref{thm2r:pt3} $\subseteq$ \ref{thm2r:pt5} we may assume that $M$ is loopless.  In this case, both sets are $\trop(\mu)$ on $T_{[n]}$ by \Cref{lem:coloopless}.  Now, suppose $\bu \times \{\infty\}^{[n]\setminus S} \in T_{S}$ is in the set \ref{thm2r:pt3}.  We need to show that it is in the closure of $\trop(\mu)$.  Since $M$ is loopless, so is $M|_{[n]\setminus S}$, and hence $\trop(\mu|_{[n]\setminus S})$ is nonempty.  Let $\overline{\bu'} \in \trop(\mu|_{[n]\setminus S})$ and pick its representative $\bu'\in \RR^{[n]\setminus S}$ to have all positive coordinates.  For a point $\bu \times c\bu' \in \RR^{[n]}$, if $c>0$ is sufficiently high (equivalently, if $\bu \times c\bu'$ is in a small enough open neighborhood of $\bu \times \{\infty\}^{[n]\setminus S}$), \Cref{lem:boundaryface} implies that $\Delta_{\mu^*}^{\bu \times c\bu'} = \Delta_{w}^{\bu \times c\bu'}$, where $w = (\mu^*)^{\be_{[n]\setminus S}}$.  Then by \Cref{lem:Mconvexhopf}, we have $w = \mu^*|_S \times \mu^*/_S$, so that $\Delta_{w}^{\bu \times c\bu'} = \Delta_{\mu^*|_S}^{\bu} \times \Delta_{\mu^*/_S}^{c\bu'}$.  By assumption the matroids of $\Delta_{\mu^*|_S}^{\bu}$ and $\Delta_{\mu^*/_S}^{c\bu'}$ are both coloopless.  We thus conclude that $\bu\times c\bu'$ is in $\trop(\mu)$ for all sufficiently large $c>0$, and hence the point $\bu \times \{\infty\}^{[n]\setminus S}$ is in the closure of $\trop(\mu)$.

For \ref{thm2r:pt5} $\subseteq$ \ref{thm2r:pt2}, we may again assume $M$ loopless, since the fact that a loop is a circuit implies that \ref{thm2r:pt2} is contained in the closure of $T_{[n]\setminus \ell}$.  In this case, both sets are $\trop(\mu)$ on $T_{[n]}$ by \Cref{prop:troplin}.(1).  Since projective tropical prevarieties are closed, we thus have \ref{thm2r:pt5} $\subseteq$ \ref{thm2r:pt2}.

Lastly, for any proper subset $\emptyset \subseteq S \subsetneq [n]$, consider the intersection of the set \ref{thm2r:pt2} with $T_{[n]\setminus S}$.  In other words, for each valuated circuit $\mathbf C$ defining a tropical polynomial $\bigoplus_{i\in [n]} C_i \odot x_i$, we give ignore all $C_i$ with $i\in S$ since $x_i = \infty$.  Thus, the description of the valuated circuits of the contraction $\mu/_S$ in \Cref{thm:circuitsvalmat}, combined with \Cref{prop:troplin}, imply \ref{thm2r:pt2} $\subseteq$ \ref{thm2r:pt1}.
\end{proof}

\section{Valuated flag matroids and flag Dressians}
\label{section:morph}

We now introduce flag Dressians and valuated flag matroids, and prove \Cref{thm:main}.  We review flag matroids in \S \ref{subsec:flagmat}.  In \S \ref{subsection:matquot} we define flag Dressians and valuated flag matroids, and show \ref{thm:pt-tropIP}$\iff$\ref{thm:pt-valflag}, which is mostly definitional (\Cref{prop:FlDr=valflagmat}).  In \S \ref{subsec:flagtroplin}, we prove \ref{thm:pt-valflag}$\iff$\ref{thm:pt-troplin} (\Cref{thm:tropquotient}).  In \S \ref{subsec:flagmatsubdiv}, we define flag matroidal subdivisions and prove \ref{thm:pt-valflag}$\implies$\ref{thm:pt-subdiv} (\Cref{thm:valflagsubdiv}) and \ref{thm:pt-subdiv}$\implies$\ref{thm:pt-troplin} (\Cref{thm:troplinsubdiv}). We give an extended illustration of \cref{thm:main} in \Cref{eg:pointline}.

\subsection{Flag matroids}
\label{subsec:flagmat}

Flag matroids are defined through matroid quotients.

\begin{defn}
\label{defn:matroidquotient}
Let $M$ and $N$ be matroids on a common ground set $[n]$.  We say that $M$ is a \textbf{(matroid) quotient} of $N$, denoted $M \twoheadleftarrow N$, if any of the following equivalent conditions are satisfied \cite[Proposition 7.4.7]{Bry86}:
\begin{enumerate}
\item For all $A\subseteq B \subseteq [n]$, we have $\operatorname{rk}_{M}(B) -\operatorname{rk}_{M}(A) \leq \operatorname{rk}_N(B) - \operatorname{rk}_{N}(A) $,
\item each circuit of $N$ is a union of circuits of $M$,
\item there exist a matroid $\widetilde M$ on $[n]\sqcup [n']$ such that $M = \widetilde M/_{[n']}$ and $N = \widetilde M\setminus_{[n']}$,
\item $N^*$ is a quotient of ${M}^*$.
\end{enumerate}
A sequence $\MM = (M_1, \ldots, M_k)$ of matroids on $[n]$ is a \textbf{flag matroid} if $M_i \twoheadleftarrow M_j$ for every $1\leq i<j \leq k$.  The \textbf{rank} of $\MM$ is the sequence of its constituent matroids $(\operatorname{rk}(M_1), \ldots, \operatorname{rk}(M_k))$.
\end{defn}

The following example gives the geometric origin of the terminology.

\begin{eg}[Realizable quotients and flag matroids]
\label{eg:kquot}
Let ${L'}^* \twoheadleftarrow L^* \twoheadleftarrow \kk^{[n]}$ be quotients of linear spaces.  Equivalently, we have an inclusion of linear subspaces $L' \subseteq L \subseteq \kk^{[n]}$.  Then, the matroids of $L'$ and $L$, which we denote $M(L')$ and $M(L)$ (\Cref{rem:geomDr}), form a matroid quotient $M(L') \twoheadleftarrow M(L)$.  Matroid quotients arising in this way are said to be \textbf{realizable (over $\kk$)}.  Similarly, a flag of linear subspaces $\boldsymbol L = L_1 \subseteq L_2 \subseteq \cdots \subseteq L_k \subseteq \kk^{[n]}$ defines a flag matroid $\MM(\boldsymbol L) = (M(L_1), \ldots, M(L_k))$.  Such flag matroids are \textbf{realizable (over $\kk$)}.
\end{eg}


\begin{rem}\label{rem:nonreal}
A quotient $M \twoheadleftarrow N$ can fail to be realizable even if $M$ and $N$ are realizable over the field.  For a concrete example, see \cite[\S1.7.5.\ Example\ 7]{BGW03}.
\end{rem}

\begin{defn}\label{defn:baseflagmat}
Given a flag matroid $\MM = (M_1, \ldots, M_k)$ on $[n]$, its \textbf{base configuration} $\bb(\MM)$ is a point configuration obtained as the Minkowski sum of the bases of its constituent matroids.  That is, $\bb(\MM) := \bb(M_1) + \cdots +\bb(M_k) = (\bb(M_1) \times \cdots \times \bb(M_k), \be)$, where
\[
(B_1, \ldots, B_k)\in \bb(M_1) \times \cdots \times \bb(M_k)\mapsto \be_{B_1} + \cdots +\be_{B_k} \in \RR^{[n]}.
\]
The \textbf{base polytope} $Q(\MM)$ of $\MM$ is the convex hull of the the image of the base configuration, i.e.\
\[
Q(\MM) := \operatorname{Conv}(\be_{B_1} + \cdots + \be_{B_k} \mid (B_1,\ldots,B_s) \in \bb(M_1)\times \cdots \times \bb(M_k)) \subset \mathbb{R}^{[n]}.
\]
\end{defn}

Properties of matroid polytopes found in \Cref{thm:GGMS} extend to flag matroid base polytopes.

\begin{thm}\label{thm:GGMSmore}\
\begin{enumerate}
\item \cite[Theorem 1.11.1]{BGW03} A lattice polytope $Q\subset \RR^{[n]}$ is a base polytope of a flag matroid of rank $(r_1, \ldots, r_k)$ if and only if it is a generalized permutohedron and its vertices are a subset of the orbit of $\be_{\{1,2,\ldots, r_1\}} + \cdots + \be_{\{1,2,\ldots, r_k\}}$ under the permutation group $S_n$.
\item For a flag matroid $\MM = (M_1, \ldots, M_k)$ on $[n]$, and a subset $S\subseteq [n]$, the sequences $\MM|_S := (M_1|_S, \ldots, M_k|_S)$ and $\MM/_S := (M_1/_S, \ldots, M_k/_S)$ are flag matroids, and the face $Q(\MM)^{\be_{[n]\setminus S}}$ is the product $Q(\MM|_S)\times Q(\MM/_S)$.
\end{enumerate}
\end{thm}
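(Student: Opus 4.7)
My plan is to handle the two parts of the statement separately, since Part (1) is quoted directly from \cite[Theorem 1.11.1]{BGW03} and so the substantive work lies in Part (2). Part (2) itself breaks into two claims: (a) that $\MM|_S$ and $\MM/_S$ are sequences of matroids forming flag matroids, and (b) that the face $Q(\MM)^{\be_{[n]\setminus S}}$ decomposes as the product $Q(\MM|_S)\times Q(\MM/_S)$.

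For (a), I would use the rank-function characterization of a matroid quotient, namely condition (1) of \Cref{defn:matroidquotient}. Given $M_i \twoheadleftarrow M_j$, I need $M_i|_S \twoheadleftarrow M_j|_S$ and $M_i/_S \twoheadleftarrow M_j/_S$ for each pair $i<j$. Using the standard identities $\operatorname{rk}_{M|_S}(X) = \operatorname{rk}_M(X)$ for $X\subseteq S$ and $\operatorname{rk}_{M/_S}(X) = \operatorname{rk}_M(X\cup S) - \operatorname{rk}_M(S)$ for $X\subseteq [n]\setminus S$, the required submodular inequality for the restriction is literally the inequality for $M_i \twoheadleftarrow M_j$ on the same pair $A\subseteq B$; for the contraction, the $-\operatorname{rk}(S)$ constants cancel and the inequality reduces to that for $M_i \twoheadleftarrow M_j$ on the pair $A\cup S \subseteq B\cup S$. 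So both operations preserve the quotient relation.

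For (b), I would exploit the fact that, by \Cref{defn:baseflagmat}, $Q(\MM) = \sum_{i=1}^k Q(M_i)$ is a Minkowski sum of matroid base polytopes. Taking the face in the direction $\be_{[n]\setminus S}$ commutes with Minkowski summation:
\[
Q(\MM)^{\be_{[n]\setminus S}} = \sum_{i=1}^k Q(M_i)^{\be_{[n]\setminus S}}.
\]
Each $Q(M_i)$ is a generalized permutohedron by \Cref{thm:GGMS}, so \Cref{thm:AA17} gives $Q(M_i)^{\be_{[n]\setminus S}} = Q(M_i)|_S \times Q(M_i)/_S$. The classical correspondence between matroid minors and polytope projections identifies $Q(M_i)|_S = Q(M_i|_S)$ and $Q(M_i)/_S = Q(M_i/_S)$. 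Finally, the elementary set identity $\sum_i (A_i \times B_i) = (\sum_i A_i) \times (\sum_i B_i)$ valid in $\RR^S \times \RR^{[n]\setminus S}$ yields
\[
Q(\MM)^{\be_{[n]\setminus S}} = \Big(\sum_i Q(M_i|_S)\Big) \times \Big(\sum_i Q(M_i/_S)\Big) = Q(\MM|_S) \times Q(\MM/_S).
\]

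The only delicate point is the bookkeeping in (b): one must make sure the decomposition $\RR^{[n]} = \RR^S \times \RR^{[n]\setminus S}$ is used consistently so that, after passing to the face, the two coordinate blocks become genuinely independent and the Cartesian-product distributivity goes through. Once \Cref{thm:AA17} is invoked to split each summand, however, this is a one-line computation, so the theorem ultimately reduces to Part (1) plus the Hopf-monoid structure of generalized permutohedra.
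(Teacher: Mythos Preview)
Your proposal is correct and follows essentially the same approach as the paper: the paper's proof simply says that the first part of (2) ``is checked directly from the description of matroid quotients by rank functions,'' and that the second part ``follows by \Cref{lem:Minkowskifaces} and \Cref{thm:AA17},'' which is exactly your argument with the details spelled out. Your use of the fact that taking the face in direction $\be_{[n]\setminus S}$ distributes over Minkowski sums is the unweighted special case of \Cref{lem:Minkowskifaces}, and your invocation of \Cref{thm:AA17} on each summand, followed by the distributivity $\sum_i(A_i\times B_i)=(\sum_i A_i)\times(\sum_i B_i)$, matches the paper's sketch precisely.
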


\begin{proof}
The first part of statement (2) is checked directly from the description of matroid quotients by rank functions.  The second part of (2) follows by \Cref{lem:Minkowskifaces} and \Cref{thm:AA17}.
\end{proof}

\begin{rem}\label{rem:looplessbad}
Restricting to only loopless matroids is harmless in studying matroids because the only data lost by deleting the loops of a matroid is the number of loops:  if $\ell$ is the set of loops of a matroid $M$, then $M = M\setminus_{\ell} \oplus U_{0,\ell}$, so one easily recovers $M$ from $M\setminus\ell$ and $|\ell|$.
However, for a flag matroid $\MM = (M_1, \ldots, M_k)$ on $[n]$, an element $e\in [n]$ can be a loop in some but not all of the matroids $M_1, \ldots, M_k$, and in such cases one cannot always recover $\MM$ from $\MM\setminus_e = (M_1 \setminus_e, \ldots, M_k \setminus_e)$ and $\MM|_e = (M_1|_e, \ldots, M_k|_e)$. So, it is necessary for us to develop the theory for matroids with loops in the flag setting.
\end{rem}

\begin{rem}
According to \cite{GS87a, BGW03}, flag matroids are exactly the Coxeter matroids of type $A$.  Coxeter matroids in general are defined by modifying  \Cref{thm:GGMSmore}.(1) with the notion of \emph{Coxeter generalized permutohedra}.  See \cite{ACEP20} for a modern treatment of Coxeter generalized permutohedra and their connection to combinatorics.
\end{rem}

\subsection{Definition of flag Dressians and valuated flag matroids}
\label{subsection:matquot}

We now extend Dressians and valuated matroids, described in \Cref{subsection:dressians}, to the setting of flag matroids.  

\begin{defn}
\label{defn:flagDressian}
Let $0\leq r \leq s \leq n$.  The \textbf{tropical incidence-Pl\"ucker relations} are tropical polynomials in variables $\{P_{I} \mid I\in \binom{[n]}{r}\} \cup \{P_J \mid J \in \binom{[n]}{s} \}$ defined as
\begin{equation}\label{eqn:tropIP}\tag{IP}
\mathscr P_{r,s;n}^{\trop} = \left\{ \bigoplus_{j' \in J'\setminus I'} P_{I'\cup j'} \odot P_{J' \setminus j'} \ \middle | \ I' \in \binom{[n]}{r-1},\ J' \in \binom{[n]}{s+1}\right\}.
\end{equation}
When $r = s$, the sets $\{P_{I} \mid I\in \binom{[n]}{r}\}$ and $\{P_J \mid J \in \binom{[n]}{s} \}$ coincide, and the relations $\mathscr P_{r,s;n}^{\trop}$ in \eqref{eqn:tropIP} above degenerate to $\mathscr P_{r;n}^{\trop}$ in \eqref{eqn:tropGP}.
These tropical polynomials are multi-homogeneous with respect to the partition $\binom{[n]}{r}\sqcup \binom{[n]}{s}$.  For $0\leq r_1\leq \cdots \leq r_k \leq n$, the \textbf{flag Dressian} (of rank $(r_1, \ldots, r_k)$ on $[n]$) is the multi-projective tropical prevariety inside $\PP\big(\TT^{\binom{[n]}{r_1}}\big)\times \cdots \times \PP\big(\TT^{\binom{[n]}{r_k}}\big)$ defined by the tropical Grassmann-Pl\"ucker relations \eqref{eqn:tropGP} and the tropical incidence-Pl\"ucker relations \eqref{eqn:tropIP}:
\[
FlDr(r_1, \ldots, r_k) := \overline\trop\left (F \in \left\{\mathscr P_{r_i;n}^{\trop}\right\}_{1\leq i \leq k} \cup \left\{\mathscr P_{r_i,r_j;n}^{\trop}\right\}_{1\leq i< j \leq n}\right).
\]
\end{defn}

We interpret the tropical incidence-Pl\"ucker relations as a condition for \emph{valuated matroid quotients}, and points on the flag Dressian as \emph{valuated flag matroids}, defined as follows.

\begin{defn}
\label{defn:valflagmat}
Let $\mu$ and $\nu$ be valuated matroids on a common ground set $[n]$, whose underlying matroids are $M$ and $N$ of ranks $r$ and $s$ (respectively) with $r\leq s$.  We say that $\mu$ is a \textbf{valuated (matroid) quotient of $\nu$}, denoted $\mu\twoheadleftarrow \nu$, if for any $I\in \mathcal B(M)$, $J\in \mathcal B(N)$, and $i\in I\setminus J$, there exists $j\in J\setminus I$ such that
\[
\mu(I) + \nu(J) \geq \mu(I \setminus i \cup j) + \nu(J\setminus j \cup i).
\]
A sequence $\boldsymbol \mu = (\mu_1, \ldots, \mu_k)$ of valuated matroids on $[n]$ is a \textbf{valuated flag matroid} if $\mu_i \twoheadleftarrow \mu_j$ for every $1\leq i < j \leq k$.
It follows from the definition that $\mu\twoheadleftarrow \nu$ if and only if $\nu^* \twoheadleftarrow \mu^*$.
\end{defn}

\medskip
We will show that the underlying matroids of a valuated matroid quotients form a matroid quotient (\Cref{cor:firstprop}).  Thus, for a valuated flag matroid $\boldsymbol{\mu} = (\mu_1, \ldots, \mu_k)$, its sequence of underlying matroids $(M_1, \ldots, M_k)$ is called the underlying flag matroid of $\boldsymbol\mu$.

\medskip
We first note that points of the flag Dressian correspond to valuated flag matroids.  The following is the equivalence \ref{thm:pt-tropIP}$\iff$\ref{thm:pt-valflag} in \Cref{thm:main}.

\begin{prop}\label{prop:FlDr=valflagmat}
Let $\mu \times \nu$ be a point on $\TT^{\binom{[n]}{r}} \times \TT^{\binom{[n]}{s}}$.  Its image $\overline\mu\times \overline\nu \in \PP(\TT^{\binom{[n]}{r}}) \times \PP(\TT^{\binom{[n]}{s}})$ is a point on the flag Dressian $FlDr(r,s;n)$ if and only if $\mu$ and $\nu$ are valuated matroids that form a valuated matroid quotient $\mu\twoheadleftarrow \nu$.  In other words, the points on the flag Dressian $FlDr(r_1, \ldots, r_k;n)$ correspond to valuated flag matroids of rank $(r_1, \ldots, r_k)$ on $[n]$.
\end{prop}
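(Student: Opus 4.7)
The plan is to prove the stated biconditional for $k=2$, and then observe that the general case follows by applying it to every pair of indices, since a valuated flag matroid is defined by pairwise valuated quotients and the flag Dressian $FlDr(r_1,\ldots,r_k;n)$ is cut out by the rank-wise tropical Grassmann--Pl\"ucker relations together with the pairwise tropical incidence--Pl\"ucker relations. By \Cref{thm:Dressians}, the conditions that $\mu$ and $\nu$ each satisfy their appropriate tropical Grassmann--Pl\"ucker relations are equivalent to $\mu$ and $\nu$ being valuated matroids of ranks $r$ and $s$. So the task reduces to showing that, assuming $\mu$ and $\nu$ are valuated matroids, the pair $(\mu,\nu)$ satisfies $\mathscr P_{r,s;n}^{\trop}$ if and only if $\mu \twoheadleftarrow \nu$.

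For the forward direction, I would start with the data $I \in \bb(M)$, $J \in \bb(N)$, $i \in I \setminus J$ from the definition of $\mu \twoheadleftarrow \nu$, and repackage it as $I' := I \setminus i \in \binom{[n]}{r-1}$ and $J' := J \cup i \in \binom{[n]}{s+1}$. A direct bookkeeping shows $J' \setminus I' = (J \setminus I) \cup \{i\}$; the term $j' = i$ in the IP tropical polynomial for $(I',J')$ evaluates to the finite number $\mu(I) + \nu(J)$, and the remaining terms $j' = j \in J \setminus I$ evaluate to $\mu(I \setminus i \cup j) + \nu(J \setminus j \cup i)$. Since the IP relation forces the minimum to be achieved at least twice, some $j \in J \setminus I$ must yield a value at most $\mu(I) + \nu(J)$, which is exactly the defining inequality of $\mu \twoheadleftarrow \nu$.

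For the backward direction, fix $I' \in \binom{[n]}{r-1}$ and $J' \in \binom{[n]}{s+1}$. If every value $\mu(I' \cup j') + \nu(J' \setminus j')$ equals $\infty$, the IP relation is satisfied vacuously by the convention built into the definition of $\overline{\trop}(F)$. Otherwise, pick $j^* \in J' \setminus I'$ attaining the finite minimum and set $I := I' \cup j^*$, $J := J' \setminus j^*$, $i := j^*$, so that $I \in \bb(M)$, $J \in \bb(N)$, and $i \in I \setminus J$. Invoking $\mu \twoheadleftarrow \nu$ yields $j \in J \setminus I$ with
\[
\mu(I) + \nu(J) \geq \mu(I' \cup j) + \nu(J' \setminus j).
\]
Since $j \in J' \setminus I'$ with $j \neq j^*$, and since $j^*$ already attains the minimum of the IP tropical polynomial, the inequality must be an equality, and $j$ furnishes the required second minimizer.

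The whole argument is essentially the change of variables $(I,J,i) \longleftrightarrow (I' \cup j^*,\, J' \setminus j^*,\, j^*)$ between the two formulations. I do not anticipate a serious obstacle; the only nuance is the case in which the IP tropical polynomial is identically $\infty$ on its support, which is handled by the standing convention already in place for $\overline{\trop}(F)$.
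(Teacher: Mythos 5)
Your proof is correct and follows essentially the same approach as the paper's: both rely on \Cref{thm:Dressians} to reduce the Grassmann--Pl\"ucker conditions to the valuated matroid property, and both observe the equivalence between the incidence--Pl\"ucker tropical condition for $(I',J')$ and the valuated quotient inequality for $(I,J,i)$ via the change of variables $(I,J,i)\leftrightarrow(I'\cup j^*, J'\setminus j^*, j^*)$. Your version simply spells out the forward and backward directions (and the all-$\infty$ convention) in more detail than the paper's terse "the two conditions are equivalent."
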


\begin{proof}
Each of $\mu$ and $\nu$ satisfies its respective tropical Grassmann-Pl\"ucker relations if and only if it is a valuated matroid by \Cref{thm:Dressians}.  Now, note that the tropical incidence-relation
\[
\bigoplus_{j' \in J'\setminus I'} P_{I'\cup j'} \odot P_{J' \setminus j'}
\]
for $I'\in \binom{[n]}{r-1},\ J'\in \binom{[n]}{s+1}$
can be rewritten as follows:  Fix any $i\in J'\setminus I'$, and set $I = I'\cup i$ and $J = J'\setminus i$.  Then, the above tropical polynomial is the same as
\[
P_I\odot P_J \oplus \bigoplus_{j \in J\setminus I} P_{I\setminus i \cup j} \odot P_{J\setminus j \cup i}. 
\]
The condition that the minimum (if achieved) is achieved by at least two terms of these tropical polynomials is equivalent to the condition imposed by the inequalities in the definition of valuated matroid quotients.
\end{proof}

\begin{defn}
Recall the notation $T_S := \RR^S/\RR\mathbf 1 \times \{\infty\}^{E\setminus S} \subset \PP(\TT^E)$ for sets $S\subseteq E$.
Let $\MM = (M_1, \ldots, M_k)$ be a flag matroid of rank $(r_1, \ldots, r_k)$ on $[n]$.  The \textbf{flag Dressian of $\MM$}, denoted $FlDr(\MM)$, is the intersection 
\[
FlDr(\MM) := FlDr(r_1, \ldots, r_k;n) \cap \big( T_{\mathcal B(M_1)} \times \cdots \times T_{\mathcal B(M_k)} \big) \subset \PP\big(\TT^{\binom{[n]}{r_1}}\big)\times \cdots \times \PP\big(\TT^{\binom{[n]}{r_k}}\big).
\]
\end{defn}

In other words, by \Cref{prop:FlDr=valflagmat} the flag Dressian $FlDr(\MM)$ parametrizes all valuated flag matroids whose underlying flag matroid is $\MM$.

\begin{rem}\label{rem:plucker}
For linear subspaces $K$ and $L$ of $\kk^{[n]}$ of rank $r$ and $s$, let $(p_I)_{I\in \binom{[n]}{r}}$ and $(p_J)_{I\in \binom{[n]}{s}}$ be their Pl\"ucker coordinates (respectively).  Then $K\subseteq L$ if and only if the two Pl\"ucker coordinates satisfy the \textbf{incidence-Pl\"ucker relations} \cite[\S 9, Lemma 2]{Ful97}:
\begin{equation}\label{eqn:plucker2}
\mathscr P_{r,s; [n]} = \left\{ \sum_{j'\in J'\setminus I'} \operatorname{sign}(j';I',J') P_{I'\cup j'}P_{J'\setminus j'} \ \middle| \ I'\in \binom{[n]}{r-1}, \ J' \in \binom{[n]}{s+1} \right\}
\end{equation}
where $\operatorname{sign}(j';I',J') = (-1)^{\#\{a\in I' \mid a< j'\} + \#\{b\in J' \mid b<j'\}}$.
The tropical incidence-Pl\"ucker relations are tropicalizations of these polynomials.  That is, we have $\mathscr P_{r,s;n}^{\trop} = \{f^{\trop} \mid f\in \mathscr P_{r,s;n}\}$.  Thus, if $K\subseteq L$, then the corresponding valuated matroids $\mu(K)$ and $\mu(L)$ form a valuated matroid quotient $\mu(K) \twoheadleftarrow \mu(L)$.  Valuated matroid quotients arising in this way are said to be \textbf{realizable (over $\kk$)}.
\end{rem}

\begin{rem}\label{rem:geomFlDr}
The (partial) flag variety $Fl(r_1, \ldots, r_k;n)$ consists of flags of linear subspaces $L_1 \subseteq \cdots \subseteq L_k \subseteq \kk^{[n]}$ with $\dim_\kk L_i = r_i$.   It has an embedding by $Fl(r_1, \ldots, r_k;n) \hookrightarrow Gr(r_1;n) \times \cdots \times Gr(r_k;n) \hookrightarrow \PP(\kk^{\binom{[n]}{r_1}}) \times \cdots \times \PP(\kk^{\binom{[n]}{r_1}})$.  When $\operatorname{char}\kk =0$, the Grassmann-Pl\"ucker relations \eqref{eqn:plucker1} combined with the incidence-Pl\"ucker relations \eqref{eqn:plucker2} generate the multi-homogeneous ideal of this embedding.  When $\operatorname{char}\kk >0$, they generate the ideal up to radical.  The multi-projective tropicalization of the flag variety $\overline{\trop}(Fl(r_1, \ldots, r_k;n))$ is thus a subset of $FlDr(r_1, \ldots, r_k;n)$.  The points of $\overline{\trop}(Fl(r_1, \ldots, r_k;n))$ correspond to valuated flag matroids that are \textbf{realizable (over $\kk$)}.  The inclusion of $\overline{\trop}(Fl(r_1, \ldots, r_k;n))$ in $FlDr(r_1, \ldots, r_k;n)$ is strict precisely when there are valuated flag matroids of rank $(r_1, \ldots, r_k)$ on $[n]$ that are not realizable (over $\kk$).
\end{rem}

\begin{rem}
A valuated matroid quotient $\mu\twoheadleftarrow \nu$, where $\mu$ and $\nu$ are realizable over $\kk$ and the underlying matroid quotient is realizable over $\kk$, can fail to be realizable over $\kk$.  See \Cref{eg:m4u26}.
\end{rem}

We address realizability of valuated flag matroids in more depth in \S \ref{sec:realizability}.

\subsection{Flags of projective tropical linear spaces}
\label{subsec:flagtroplin}

We show that a valuated matroid quotient is equivalent to the inclusion of the corresponding projective tropical linear spaces.  This proves the equivalence \ref{thm:pt-valflag}$\iff$\ref{thm:pt-troplin} in \Cref{thm:main}.

\begin{thm}
\label{thm:tropquotient}
Let $\mu$ and $\nu$ be valuated matroids of ranks $r$ and $s$ respectively on a common ground set $[n]$. Then $\mu\twoheadleftarrow \nu$ if and only if $\overline{\trop}(\mu)\subseteq \overline{\trop}(\nu)$.  In other words, a sequence $\boldsymbol \mu = (\mu_1, \ldots, \mu_k)$ of valuated matroids is a valuated flag matroid if and only if $\overline\trop(\mu_1)\subseteq \cdots \subseteq \overline\trop(\mu_k)$.
\end{thm}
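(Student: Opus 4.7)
The plan is to combine \Cref{prop:FlDr=valflagmat} with the characterizations \ref{thm2:pt2} and \ref{thm2:pt4} in \Cref{thm:main2}.  The second statement of the theorem (about a sequence $\bmu$) follows at once from the pairwise case, so I focus on establishing
\[
\mu \twoheadleftarrow \nu \iff \overline{\trop}(\mu)\subseteq\overline{\trop}(\nu).
\]

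The linchpin is a direct unwinding of definitions.  For $S\in\binom{[n]}{r-1}$ and $T\in\binom{[n]}{s+1}$, the tropical incidence-Pl\"ucker relation associated to the pair $(S,T)$ is the condition that the minimum of $\{\mu(S\cup j')+\nu(T\setminus j')\}_{j'\in T\setminus S}$ be achieved at least twice.  By \Cref{defn:valuatedcircuits}, this is precisely the condition that
\[
\min_{i\in[n]}\bigl\{C_\mu^*(S)_i+C_\nu(T)_i\bigr\}\ \text{is achieved at least twice},
\]
since $C_\mu^*(S)_i=\infty$ for $i\in S$ and $C_\nu(T)_i=\infty$ for $i\notin T$, so that the effective index set collapses to $T\setminus S$.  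Ranging over all $(S,T)$ and using \Cref{thm:main2}\ref{thm2:pt2}, this says the full system of tropical incidence-Pl\"ucker relations for $(\mu,\nu)$ is equivalent to $\mathcal{C}^*(\mu)\subseteq\overline{\trop}(\nu)$.  Combined with \Cref{prop:FlDr=valflagmat}, I obtain
\[
\mu\twoheadleftarrow\nu \iff \mathcal{C}^*(\mu)\subseteq\overline{\trop}(\nu).
\]

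For the forward direction, I then need to upgrade $\mathcal{C}^*(\mu)\subseteq\overline{\trop}(\nu)$ to the stronger containment $\overline{\trop}(\mu)\subseteq\overline{\trop}(\nu)$.  By \Cref{thm:main2}\ref{thm2:pt4}, $\overline{\trop}(\mu)$ is the tropical span of $\mathcal{C}^*(\mu)$, so it suffices to check that $\overline{\trop}(\nu)$ is closed under $\oplus$ and tropical scalar shifts $\odot$.  This is where the only real work lies: via \Cref{thm:main2}\ref{thm2:pt2}, $\overline{\trop}(\nu)$ is an intersection of tropical hyperplanes defined by tropical-linear forms $\bigoplus_i C_i\odot x_i$, and a short case analysis on which of the two candidate minima (from the two input points) is smaller shows that each such hyperplane is closed under $\oplus$ and $\odot$.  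This is the routine but key step I expect to be the main obstacle.

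The reverse direction is immediate: if $\overline{\trop}(\mu)\subseteq\overline{\trop}(\nu)$, then by taking $m=1$ and $a_1=0$ in \Cref{thm:main2}\ref{thm2:pt4} we get $\mathcal{C}^*(\mu)\subseteq\overline{\trop}(\mu)\subseteq\overline{\trop}(\nu)$, and the unwinding above then recovers the tropical incidence-Pl\"ucker relations, yielding $\mu\twoheadleftarrow\nu$ by \Cref{prop:FlDr=valflagmat}.  Loops in the underlying matroids need no special handling: they produce $\infty$ entries in both valuated circuits and valuated cocircuits, which are absorbed uniformly by the projective setting built into $\PP(\TT^{[n]})$.
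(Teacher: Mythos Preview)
Your proposal is correct and follows essentially the same route as the paper: both arguments reduce the equivalence to $\mathcal{C}^*(\mu)\subseteq\overline{\trop}(\nu)$ by pairing valuated cocircuits of $\mu$ against valuated circuits of $\nu$ via \Cref{thm:main2}\ref{thm2:pt2} and \ref{thm2:pt4}, then unwind this to the tropical incidence-Pl\"ucker relations. The only minor difference is that you verify tropical convexity of $\overline{\trop}(\nu)$ by a direct case analysis on the tropical-linear forms from \ref{thm2:pt2}, whereas the paper gets it for free from \ref{thm2:pt4} (a tropical span is automatically closed under further tropical linear combinations); this makes the paper's version of the ``hence'' step a touch shorter, but the substance is identical.
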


\begin{proof}
By \Cref{thm:main2}.\ref{thm2:pt4}, the projective tropical linear space $\overline{\trop}(\mu)$ is the tropical span of its valuated cocircuits $\mathcal C^*(\mu)$. Hence, we have $\overline{\trop}(\mu)\subset \overline{\trop}(\nu)$ if and only if $\cc^*(\mu) \subset \overline \trop(\nu)$.

By \Cref{thm:main2}.\ref{thm2:pt2}, the projective tropical linear space $\overline{\trop}(\nu)$ is cut out by its valuated circuits:
\[\overline{\trop}(\nu):=\bigcap_{\mathbf C\in \cc(\nu)} \overline{\trop} \left(\bigoplus_{j'\in [n]} C_{j'} \odot x_{j'}\right).\]
The description of valuated circuits and cocircuits (\Cref{defn:valuatedcircuits}) implies that $\mathcal C^*(\mu) \subseteq \overline\trop(\nu)$ if and only if the minimum in 
\[ \left\{C_{\nu}(J')_{j'} + C^*_\mu(I')_{j'} \right\}_{{j'}\in [n]} = \left\{\nu(J'\setminus {j'}) \odot \mu(I'\cup {j'}) \right\}_{{j'}\in [n]}
\]
is attained at least twice for each $I'\in \binom{[n]}{r-1}$ and $J'\in \binom{[n]}{s+1}$.  Removing the terms that are $\infty$ on the right hand side, this is the same as saying that the minimum in
\[
\left\{\mu(I'\cup j')\odot \nu(J'\setminus j') \right\}_{j'\in J'\setminus I'}
\]
is attained at least twice for every $I'$ and $J'$, which is exactly the condition defined by the tropical incidence-Pl\"ucker relations \eqref{eqn:tropIP}.
\end{proof}

This proof closely mirrors that of \cite[Theorem 1]{Haq12}, where \Cref{thm:tropquotient} was proved for loopless matroids.
We list some properties of valuated matroid quotients that follow from \Cref{thm:tropquotient}.

\begin{cor}\label{cor:firstprop}
Let $\mu, \nu, \xi$ be valuated matroids on $[n]$.
\begin{enumerate}
\item A composition of valuated matroid quotients is a valuated matroid quotient.  That is, if $\mu\twoheadrightarrow \nu$ and $\nu\twoheadrightarrow \xi$ then $\mu\twoheadrightarrow \xi$.
\item If $\mu\twoheadleftarrow \nu$, then $\mu/_S \twoheadleftarrow \nu/_S$ and $\mu\setminus_S \twoheadleftarrow \nu\setminus_S$ for any subset $S \subseteq [n]$.
\item For any $S\subset [n]$, we have $\mu/_S \twoheadleftarrow \mu\setminus _S$. 
\item The underlying matroids $M$ and $N$ of $\mu$ and $\nu$ (respectively) satisfy $M\twoheadleftarrow N$ if $\mu\twoheadleftarrow \nu$.
\end{enumerate}
\end{cor}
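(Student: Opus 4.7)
The plan is to reduce each of the four assertions to \Cref{thm:tropquotient} via the observation that contraction of a valuated matroid corresponds to intersection with a boundary stratum of projective tropical space.  Concretely, I would first verify the identification
\[
\overline\trop(\mu)\cap\overline{T_{[n]\setminus S}} \;=\; \overline\trop(\mu/_S),
\]
under the standard identification $\overline{T_{[n]\setminus S}} \cong \PP(\TT^{[n]\setminus S})$.  This falls out of \Cref{defn:troplin} by intersecting the stratification $\overline\trop(\mu) = \bigsqcup_{S'\subsetneq [n]}\trop(\mu/_{S'})\times\{\infty\}^{S'}$ with $\overline{T_{[n]\setminus S}} = \bigsqcup_{S'\supseteq S}T_{[n]\setminus S'}$ and repackaging the surviving strata via $\mu/_{S'} = (\mu/_S)/_{S'\setminus S}$.

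Part (1) will then be immediate from transitivity of inclusion via \Cref{thm:tropquotient}.  For part (2), the contraction half follows by intersecting $\overline\trop(\mu)\subseteq\overline\trop(\nu)$ with $\overline{T_{[n]\setminus S}}$ and reapplying \Cref{thm:tropquotient}.  For the deletion half I would invoke the duality $(\mu\setminus_S)^* = \mu^*/_S$ together with the equivalence $\mu\twoheadleftarrow\nu \iff \nu^*\twoheadleftarrow\mu^*$ stated at the end of \Cref{defn:valflagmat}, reducing the deletion case to the already-established contraction case applied to $\nu^*\twoheadleftarrow\mu^*$.

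Part (3) is where the argument is most delicate, and I expect it to be the main obstacle.  Using the identity above, $\overline\trop(\mu/_S)$ embeds into $\overline\trop(\mu)$ inside $\PP(\TT^{[n]})$.  I would then apply the circuit characterization \Cref{thm:main2}.\ref{thm2r:pt2} of $\overline\trop(\mu\setminus_S)$, combined with \Cref{thm:circuitsvalmat}, which says that its defining circuits are precisely the circuits of $\mu$ supported in $[n]\setminus S$.  For any such circuit $\mathbf C$ one has $C_i=\infty$ for $i\in S$, so the minimum in $\{C_i+u_i\}_{i\in [n]}$ and in $\{C_i+u_i\}_{i\in [n]\setminus S}$ coincide and are attained at the same indices; a point of $\overline\trop(\mu)$ then automatically satisfies the $\PP(\TT^{[n]\setminus S})$ equation of $\mathbf C$, placing it in $\overline\trop(\mu\setminus_S)$.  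Invoking \Cref{thm:tropquotient} yields $\mu/_S\twoheadleftarrow\mu\setminus_S$.

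Part (4) is essentially definitional.  Given $I\in\mathcal B(M)$, $J\in\mathcal B(N)$, and $i\in I\setminus J$, the valuated quotient axiom supplies $j\in J\setminus I$ with $\mu(I)+\nu(J) \geq \mu(I\setminus i\cup j)+\nu(J\setminus j\cup i)$.  Since the left side is finite, both terms on the right must be finite too, forcing $I\setminus i\cup j\in\mathcal B(M)$ and $J\setminus j\cup i\in\mathcal B(N)$.  This basis-exchange property is the characterization of the matroid-quotient relation $M\twoheadleftarrow N$ from \cite[Corollary 1.7.3]{BGW03}.  Overall, once the contraction-as-intersection identity and \Cref{thm:tropquotient} are in hand, each of (1)--(4) is short; the only genuinely nontrivial step is the circuit-comparison argument in (3).
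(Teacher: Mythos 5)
Your argument is correct, and for parts (1) and (2) it matches the paper's approach almost verbatim (the paper's treatment of (2) also reduces the deletion half to the contraction half via duality, and the contraction half via \Cref{thm:main2}.\ref{thm2:pt1}, which is exactly your boundary-stratum identity). However, for parts (3) and (4) you take a genuinely different route.

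For (3), the paper works entirely with the cocircuit-span description \Cref{thm:main2}.\ref{thm2:pt4}: it invokes \Cref{thm:circuitsvalmat} to assert a containment of valuated cocircuit sets $\cc^*(\mu/_S) = \cc(\mu^*\setminus_S) \subseteq \cc(\mu^*/_S) = \cc^*(\mu\setminus_S)$ and concludes the inclusion of tropical spans. Your argument instead embeds $\overline\trop(\mu/_S)$ into $\overline\trop(\mu)$ as a boundary slice and checks the defining circuit conditions \Cref{thm:main2}.\ref{thm2r:pt2} of $\overline\trop(\mu\setminus_S)$ directly, using that circuits of $\mu\setminus_S$ are exactly circuits of $\mu$ supported away from $S$ and that $\infty$'s on $S$ make the minimum condition over $[n]$ and over $[n]\setminus S$ coincide. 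This is a clean and self-contained chain that avoids the set-theoretic cocircuit inclusion the paper invokes. For (4), the paper goes back through \Cref{thm:tropquotient} and \Cref{thm:main2}.\ref{thm2r:pt4}: since $\cc^*(\mu)$ lies in the tropical span of $\cc^*(\nu)$, taking supports shows every cocircuit of $M$ is a union of cocircuits of $N$, giving $N^* \twoheadleftarrow M^*$ and hence $M\twoheadleftarrow N$ by \Cref{defn:matroidquotient}.(2),(4). You bypass the tropical-linear-space machinery entirely and argue from the definition: the valuated quotient inequality with finite left side forces both exchanged sets to be bases of the respective underlying matroids, yielding a simultaneous basis exchange that you then identify as a characterization of matroid quotients. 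That idea is correct, but be careful with the citation: what you need is that the exchange property (for all $I\in\bb(M)$, $J\in\bb(N)$, $i\in I\setminus J$ there is $j\in J\setminus I$ with $I\setminus i\cup j\in\bb(M)$ and $J\setminus j\cup i\in\bb(N)$) is equivalent to $M\twoheadleftarrow N$, and you should double-check that the reference you give actually states it in this form; the paper itself sources the equivalent characterizations of matroid quotients from \cite[Proposition 7.4.7]{Bry86}, and \cite{BGW03} is cited elsewhere with different numbering. If you prefer not to rely on a precise external numbering, the paper's cocircuit-support route accomplishes (4) using only results already developed in the paper.
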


\begin{proof}
(1) is immediate from \Cref{thm:tropquotient}.  For (2), the two statements are duals of each other since $\mu\twoheadleftarrow \nu$ if and only if $\nu^* \twoheadleftarrow \mu^*$, so we only need show $\mu/S\twoheadrightarrow \nu/S$, which follows from \Cref{thm:tropquotient} and \Cref{thm:main2}.\ref{thm2:pt1}.  For (3), note that $\mathcal C^*(\mu/_S) = \mathcal C(\mu^*\setminus_S) \subseteq \mathcal C(\mu^*/_S) = \mathcal C^*(\mu\setminus_S)$ by \Cref{thm:circuitsvalmat}, and then combine \Cref{thm:tropquotient} with \Cref{thm:main2}.\ref{thm2:pt4}.  For (4), again from \Cref{thm:tropquotient} and \Cref{thm:main2}.\ref{thm2:pt4}, we have that the the valuated cocircuits $\mathcal C^*(\mu)$ are in the tropical span of $\mathcal C^*(\nu)$.  Considering the supports of these as elements in $\TT^{[n]}$, we have that every cocircuit of $M$ is a union of cocircuits of $N$.  Hence, we have $N^* \twoheadleftarrow M^*$, or equivalently, $M\twoheadleftarrow N$.
\end{proof}

\begin{rem}
The results here about valuated matroid quotients generalize to \textbf{matroid morphisms} (see \cite{EH20}) in the following way.  For a map of finite sets $\varphi: [n] \to [m]$ and $M$ a matroid of rank $r$ on $[m]$, define a matroid $\varphi^{-1}M$ on $[n]$ by $\bb(\varphi^{-1}M) := \{B\in \binom{[n]}{r} \mid \varphi(B) \in \bb(M)\}$.  If $\mu$ is a valuated matroid on $M$, then $\varphi^{-1}\mu: B \mapsto \mu(\varphi(B))$ is a valuated matroid on $\varphi^{-1}M$.  
A \textbf{valuated matroid morphism} $\nu\to \mu$ consists of valuated matroids $\nu, \mu$ on $[n], [m]$ (respectively) and a map $\varphi: [n]\to [m]$ such that $\nu \twoheadrightarrow \varphi^{-1}(\mu)$.  By \Cref{thm:tropquotient}, this is equivalent to saying that the following diagram commutes
\begin{equation}\label{eqn:morphism}\tag{$\ddagger$}
\begin{tikzcd}
\overline{\trop}(\mu) \arrow[r] \arrow[d, hook] &\overline{\trop}(\nu) \arrow[d, hook]\\
\PP(\TT^{[m]}) \arrow[r, "\trop_\varphi"] &\PP(\TT^{[n]}),
\end{tikzcd}
\end{equation}
where $\trop_{\varphi}: \PP(\TT^{[m]}) \to \PP(\TT^{[n]})$ is given by $(u_{j})_{j\in [m]} \mapsto (u_{\varphi(i)})_{i\in [n]}$.  The diagram \eqref{eqn:morphism} mirrors the diagram defining realizable matroid morphisms \cite[Remark 2.1]{EH20}.  The map $\overline{\trop}(\mu) \to \overline{\trop}(\nu)$ in \eqref{eqn:morphism} is a tropical morphism in the sense of \cite{Mik06}.
\end{rem}

\subsection{Flag matroidal subdivisions of base polytopes}
\label{subsec:flagmatsubdiv}

We now come to subdivisions of base configurations of flag matroids. Let $\MM$ be a flag matroid.

\begin{defn}
 A subdivision of the base configuration $\bb(\MM)$ is \textbf{flag matroidal} if each face of the subdivision is a base configuration of a flag matroid.  A subdivision of the base polytope $Q(\MM)$ is \textbf{flag matroidal} if each face of the subdivision is a base polytope of a flag matroid.  
\end{defn}

A flag matroidal subdivision of $\bb(\MM)$ is necessarily mixed, and gives a flag matroidal subdivision of $Q(\MM)$.  In general one cannot recover the subdivision of a point configuration $\mathcal A$ from the resulting polyhedral subdivision of $\operatorname{Conv}(\mathcal A)$.  But in this case, since one can recover the constituent matroids $M_1, \ldots, M_k$ of a flag matroid $\MM$ from the data of its base polytope $Q( \MM)$ alone, a flag matroidal subdivision of the base polytope $Q(\MM)$ determines a subdivision of the base configuration $\bb(\MM)$. We now prove \ref{thm:pt-valflag}$\implies$\ref{thm:pt-subdiv} in Theorem \ref{thm:main}.


\begin{thm}\label{thm:valflagsubdiv}
Let $\boldsymbol\mu=(\mu_1,\dots, \mu_k)$ be a valuated flag matroid with underlying flag matroid $\MM = (M_1, \ldots, M_k)$.  Regard each $\mu_i$ as a weighted point configuration on $\bb(M_i)$.  Then, their Minkowski sum $\sum_{i=1}^k \mu_i$, which is a weight on the base configuration $\bb(\MM)$, induces a flag matroidal subdivision of $\bb(\MM)$.
\end{thm}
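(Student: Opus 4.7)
The plan is to verify that every face of the coherent subdivision $\Delta_{\sum_i \mu_i}$ is the base configuration of a flag matroid. By \Cref{lem:Minkowskifaces}, for any $\overline{\bu}\in\RR^{[n]}/\RR\mathbf{1}$, the face $\Delta_{\sum_{i=1}^k\mu_i}^{\overline{\bu}}$ decomposes as the Minkowski sum $\sum_{i=1}^k \Delta_{\mu_i}^{\overline{\bu}}$. By \Cref{thm:Dressians}, each summand $\Delta_{\mu_i}^{\overline{\bu}}$ is the base configuration of some matroid $M_i^{\overline{\bu}}$, so it suffices to prove that $(M_1^{\overline{\bu}},\ldots,M_k^{\overline{\bu}})$ is a flag matroid, i.e.\ that every pair $M_i^{\overline{\bu}} \twoheadleftarrow M_j^{\overline{\bu}}$ is a matroid quotient.

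The core claim to establish is the following: if $\mu\twoheadleftarrow\nu$ is a valuated matroid quotient on $[n]$, then for every $\overline{\bu}\in\RR^{[n]}/\RR\mathbf{1}$ the ``initial'' valuated matroids $\mu^{\overline{\bu}}$ and $\nu^{\overline{\bu}}$ (obtained by restricting to the argmin bases of $B\mapsto \mu(B)+\langle\bu,\be_B\rangle$ and $B'\mapsto \nu(B')+\langle\bu,\be_{B'}\rangle$, respectively) again form a valuated quotient $\mu^{\overline{\bu}}\twoheadleftarrow\nu^{\overline{\bu}}$. Once this is proved, \Cref{cor:firstprop}.(4) applied pairwise to $(\mu_1,\ldots,\mu_k)$ gives the required matroid quotients among the underlying matroids $M_i^{\overline{\bu}}$.

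To prove the claim, fix $I\in\bb(M^{\overline{\bu}})$, $J\in\bb(N^{\overline{\bu}})$, and $i\in I\setminus J$. By hypothesis there exists $j\in J\setminus I$ with
\[
\mu(I)+\nu(J)\ \geq\ \mu(I\setminus i\cup j)+\nu(J\setminus j\cup i).
\]
The key observation is that $\be_I+\be_J = \be_{I\setminus i\cup j}+\be_{J\setminus j\cup i}$, so adding $\langle\bu,\be_I+\be_J\rangle$ to both sides yields
\[
\bigl[\mu(I)+\langle\bu,\be_I\rangle\bigr]+\bigl[\nu(J)+\langle\bu,\be_J\rangle\bigr]\ \geq\ \bigl[\mu(I\setminus i\cup j)+\langle\bu,\be_{I\setminus i\cup j}\rangle\bigr]+\bigl[\nu(J\setminus j\cup i)+\langle\bu,\be_{J\setminus j\cup i}\rangle\bigr].
\]
Since $I\in\bb(M^{\overline{\bu}})$ and $J\in\bb(N^{\overline{\bu}})$, the LHS is the sum of the minima of $B\mapsto\mu(B)+\langle\bu,\be_B\rangle$ and $B'\mapsto\nu(B')+\langle\bu,\be_{B'}\rangle$, while each bracket on the RHS is bounded below by its corresponding minimum. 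The inequality is therefore forced to be an equality, and in particular $I\setminus i\cup j\in\bb(M^{\overline{\bu}})$ and $J\setminus j\cup i\in\bb(N^{\overline{\bu}})$, so that the same $j$ certifies the valuated exchange axiom for $\mu^{\overline{\bu}}\twoheadleftarrow\nu^{\overline{\bu}}$.

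The only real step in the argument is this one-line observation that the Minkowski sum of the two bases is unchanged under the exchange, so that $\langle\bu,\cdot\rangle$ cancels between the two sides and the valuated quotient inequality transports verbatim to initial valuated matroids. Beyond this, the proof is a direct chain of applications of \Cref{lem:Minkowskifaces}, \Cref{thm:Dressians}, and \Cref{cor:firstprop}.(4); I do not anticipate a hidden obstacle.
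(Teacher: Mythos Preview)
Your proof is correct and follows essentially the same route as the paper: both decompose the face via \Cref{lem:Minkowskifaces}, use the invariance $\be_I+\be_J=\be_{I\setminus i\cup j}+\be_{J\setminus j\cup i}$ to show that the valuated quotient condition is preserved under the linear shift $\mu\mapsto\mu(\cdot)+\langle\bu,\be_{(\cdot)}\rangle$ and under passage to the arg-min (``initial'') bases, and then invoke \Cref{cor:firstprop}.(4) to conclude that the underlying matroids form a quotient. The only cosmetic difference is that the paper separates the linear-shift step and the min-restriction step into two distinct observations (using \Cref{lem:linearshift} to reduce to $\overline\bu=\mathbf 0$), whereas you fold them into a single direct verification.
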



\begin{proof}
We begin by making the following observations for valuated matroids $\mu$ and $\nu$ with underlying matroids $M$ and $N$ (respectively).  Both observations are straightforward to verify.
\begin{itemize}
\item Fix $\bu\in \RR^{[n]}$, and consider a new weight $\mu'$ defined by
\[
\mu'(I) := \mu(I) + \langle \bu, \be_I \rangle \quad \textnormal{for }I\in \bb(M),
\]
and similarly define $\nu'$.  Then $\mu\twoheadleftarrow \nu$ if and only if $\mu' \twoheadleftarrow \nu'$.
\item Let $r$ be the rank of $M$.  Define a new weight $\mu_{min}: \binom{[n]}{r} \to \RR$ by $\mu_{min}(I) = \min(\mu)$ if $\mu(I) = \min(\mu)$ and $\infty$ otherwise, which is a (valuated) matroid also known as the \emph{initial matroid} of $\mu$ (see \cite[Definition 4.2.7]{MS15} and \cite[Definition 2.1]{BS20}).
Similarly define $\nu_{min}$.  Then $\mu\twoheadleftarrow \nu$ implies $\mu_{min}\twoheadleftarrow \nu_{min}$.  In particular, by \Cref{cor:firstprop}.(4), the underlying matroids of $\mu_{min}$ and $\nu_{min}$ form a matroid quotient if $\mu\twoheadleftarrow \nu$.
\end{itemize}
Now, fix an arbitrary $\bu\in \RR^{[n]}$ and consider the face $\Delta_{\sum_{i=1}^k \mu_i}^{\overline\bu}$ of the subdivision of $\bb(\MM)$.  By \Cref{lem:Minkowskifaces}, this face is the Minkowski sum $\sum_{i=1}^k \Delta_{\mu_i}^{\overline\bu}$.  By \Cref{lem:linearshift} and the first observation above, we may assume that $\bu = \mathbf 0$.  In this case, each face $\Delta_{\mu_i}^{\mathbf 0}$ is the underlying matroid of $(\mu_i)_{min}$, and the second observation thus implies that the faces form a flag matroid.
\end{proof}

The following theorem proves \ref{thm:pt-subdiv}$\implies$\ref{thm:pt-troplin} in Theorem \ref{thm:main}. The characterization in \Cref{thm:main2}.\ref{thm2:pt3} plays a fundamental role here.

\begin{thm}
\label{thm:troplinsubdiv}
Let $\MM = ({M_1}, \ldots, {M_k})$ be a flag matroid on $[n]$, and let $w$ be a weight on the base configuration $\bb(\underline{\mathcal M})$.  Suppose the coherent subdivision $\Delta_w$ is flag matroidal, and let $\mu_1, \ldots, \mu_k$ be any weights on $\bb(M_1), \ldots, \bb(M_k)$ satisfying $\Delta_w = \Delta_{\sum_{i=1}^k \mu_i}$.  Then $\mu_1, \ldots, \mu_k$ are valuated matroids, and they satisfy $\overline\trop(\mu_1) \subseteq \cdots \subseteq \overline\trop(\mu_k)$.
\end{thm}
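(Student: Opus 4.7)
The plan is to establish the two conclusions sequentially, both hinging on a cell-by-cell analysis.

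For the valuated-matroid assertion, I fix $\overline\bu \in \RR^{[n]}/\RR\mathbf 1$ and apply \Cref{lem:Minkowskifaces} to write $\Delta_w^{\overline\bu}$ as the Cartesian product $\prod_{i=1}^k \Delta_{\mu_i}^{\overline\bu}$ of subsets of $\bb(M_1)\times\cdots\times\bb(M_k)$.  By hypothesis the same cell equals $\bb(\boldsymbol N) = \prod_i \bb(N_i)$ for some flag matroid $\boldsymbol N = (N_1, \ldots, N_k)$.  Since both sides are nonempty Cartesian products with the same Minkowski-sum labelling map, their factors agree: $\Delta_{\mu_i}^{\overline\bu} = \bb(N_i)$ is a matroid base configuration.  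Every cell of $\Delta_{\mu_i}$ is thus a matroid base configuration, so by \Cref{thm:Mconvex}.(2) combined with \Cref{thm:GGMS}, $\mu_i$ is M-convex and hence a valuated matroid.  As a byproduct, at each $\overline\bu$ the initial matroids of $\mu_1, \ldots, \mu_k$ form a flag matroid.

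For the inclusion of projective tropical linear spaces, by \Cref{thm:main2}.\ref{thm2:pt1} it suffices to show $\trop(\mu_i/_S) \subseteq \trop(\mu_j/_S)$ for each pair $i<j$ and each proper subset $S\subsetneq [n]$.  The key step is to establish that the Minkowski sum $\sum_i \mu_i/_S$ induces a flag matroidal subdivision of $\bb(\MM/_S)$, after which the first assertion applies to the contractions.  To accomplish this I combine three ingredients: \Cref{thm:GGMSmore}.(2) gives the face factorization $\bb(\MM)^{\be_{[n]\setminus S}} = \bb(\MM|_S) \times \bb(\MM/_S)$; \Cref{lem:Mconvexhopf}, now applicable since each $\mu_i$ is M-convex, factors $\mu_i^{\be_{[n]\setminus S}} = \mu_i|_S \times \mu_i/_S$ so that $w^{\be_{[n]\setminus S}}$ is the product weight $(\sum_i \mu_i|_S) \times (\sum_i \mu_i/_S)$; and \Cref{lem:boundaryface} identifies cells of $\Delta_w$ near the boundary stratum $T_S$ with cells of this restricted product weight.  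Such a cell factors as $\Delta_{w|_S}^{\overline{\bu'}} \times \Delta_{w/_S}^{\overline{\bu''}}$, and being simultaneously a cell of $\Delta_w$ is a flag matroid base configuration; applying \Cref{thm:GGMSmore}.(2) once more to this cell identifies $\Delta_{w/_S}^{\overline{\bu''}}$ with a flag matroid base configuration on $[n]\setminus S$.

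With the contraction subdivision shown to be flag matroidal, the first assertion guarantees that the initial matroids of $(\mu_i/_S)_i$ at each $\overline{\bu''}$ form a flag matroid.  By \Cref{prop:troplin}.(2), $\overline{\bu''} \in \trop(\mu_i/_S)$ if and only if this initial matroid (at $-\overline{\bu''}$) is loopless.  In any matroid quotient $L \twoheadleftarrow L'$, the loops of $L'$ are contained in the loops of $L$: this is immediate from \Cref{defn:matroidquotient}.(3), since loops of the auxiliary matroid $\widetilde L$ are preserved under deletion (yielding $L'$) and under contraction (yielding $L$), while contraction may only introduce additional loops.  Applied to the flag matroid of initial matroids, looplessness at index $i$ propagates to all indices $j>i$, yielding $\trop(\mu_i/_S) \subseteq \trop(\mu_j/_S)$.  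The main obstacle is the contraction-reduction step: transferring the flag matroidal subdivision property from $\bb(\MM)$ to $\bb(\MM/_S)$ requires careful simultaneous use of \Cref{thm:GGMSmore}.(2), \Cref{lem:Mconvexhopf}, and \Cref{lem:boundaryface}, with the first assertion's valuated-matroid conclusion essential for invoking \Cref{lem:Mconvexhopf}.
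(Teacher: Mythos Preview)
Your proof is correct and follows essentially the same structural arc as the paper's, with one organizational difference worth noting.  For the first assertion both you and the paper invoke \Cref{lem:Minkowskifaces} and \Cref{thm:Dressians}/(c) identically.  For the inclusion of projective tropical linear spaces, the paper first \emph{dualizes}: it passes to $\mu_1^*,\ldots,\mu_k^*$, observes that the dual subdivision is again flag matroidal, and then applies the coloopless-cell characterization \Cref{thm:main2}.\ref{thm2:pt3} directly over all of $\PP(\TT^{[n]})$, using \Cref{lem:propagate} for colooplessness propagation.  You instead work stratum-by-stratum with contractions $\mu_i/_S$ and the loopless-cell characterization \Cref{prop:troplin}.(2).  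Since $(\mu/_S)^* = \mu^*|_{[n]\setminus S}$ (\Cref{thm:circuitsvalmat}), the two routes are exact duals of each other, and both rest on the same supporting lemmas (\Cref{lem:Minkowskifaces}, \Cref{lem:Mconvexhopf}, \Cref{lem:boundaryface}, \Cref{thm:GGMSmore}.(2)).  The paper's dualization cleanly sidesteps the sign in $\Delta_\mu^{-\overline\bu}$ that motivated \Cref{lem:coloopless}; your approach avoids dualizing altogether by staying on each open stratum where all coordinates are finite.  Neither buys a genuinely shorter argument.
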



\begin{lem}\label{lem:propagate}
Let $\MM = (M_1, \ldots, M_k)$ be a flag matroid on $[n]$.  If $M_i$ is loopless, then so are $M_j$ for any $1\leq i < j \leq k$.  Equivalently, if $M_i^*$ is coloopless, then so are $M_j^*$ for any $1\leq i < j \leq k$.
\end{lem}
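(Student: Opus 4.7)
The plan is to invoke the rank-function characterization (1) of matroid quotient from \Cref{defn:matroidquotient}. Since $\MM$ is a flag matroid, for any $1 \leq i < j \leq k$ we have $M_i \twoheadleftarrow M_j$, which means $\operatorname{rk}_{M_i}(B) - \operatorname{rk}_{M_i}(A) \leq \operatorname{rk}_{M_j}(B) - \operatorname{rk}_{M_j}(A)$ for all $A \subseteq B \subseteq [n]$. Specializing to $A = \emptyset$ and $B = \{e\}$ for an arbitrary $e \in [n]$ gives $\operatorname{rk}_{M_i}(\{e\}) \leq \operatorname{rk}_{M_j}(\{e\})$.

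From here the argument is one line: if $e$ is a loop of $M_j$, then $\operatorname{rk}_{M_j}(\{e\}) = 0$ forces $\operatorname{rk}_{M_i}(\{e\}) = 0$, so $e$ is a loop of $M_i$. The contrapositive is exactly the claim that $M_i$ loopless implies $M_j$ loopless. Alternatively, one can read the same conclusion off of characterization (2): every circuit of $M_j$ is a union of circuits of $M_i$, and the only way to write $\{e\}$ as a union of nonempty subsets of itself is $\{e\}$ itself, so any loop of $M_j$ is a loop of $M_i$.

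The equivalence with the dual formulation is immediate from the standard fact that $e$ is a coloop of $N$ if and only if $e$ is a loop of $N^*$, so \textbf{$M_i^*$ coloopless $\iff$ $M_i$ loopless}, and similarly for $j$. I do not anticipate any substantive obstacle; the lemma is essentially a one-line consequence of the equivalent characterizations of matroid quotient already recalled in \Cref{defn:matroidquotient}, and the only thing worth emphasizing is that in a flag $M_1 \twoheadleftarrow \cdots \twoheadleftarrow M_k$ loops propagate \emph{backward} (from higher to lower index), which is why the loopless property propagates forward.
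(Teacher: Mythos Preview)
Your proof is correct and follows essentially the same approach as the paper: both use the rank-function characterization of matroid quotient with $A=\emptyset$, $B=\{l\}$ to conclude that any loop of $M_j$ is a loop of $M_i$, then take the contrapositive. Your additional remark via characterization~(2) and the explicit handling of the dual statement are fine but not needed beyond what the paper does.
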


\begin{proof}
The definition of matroid quotients by rank functions implies that if an element $l\in [n]$ satisfies $\operatorname{rk}_{M_j}(l) = 0$, then $\operatorname{rk}_{M_i}(l) = 0$ also for all $1\leq i<j\leq k$.
\end{proof}

\begin{proof}[Proof of \Cref{thm:troplinsubdiv}]
By \Cref{lem:Minkowskifaces}, for every $\bu\in \RR^{[n]}/\RR\mathbf 1$ the face $\Delta_w^{\overline\bu} = \Delta_{\sum_{i=1}^k \mu_i}^{\overline\bu}$ is the Minkowski sum $\sum_{i=1}^s \Delta_{\mu_i}^{\overline\bu}$.  Since $\Delta_w$ is flag matroidal, in particular each face of $\Delta_w$ is a Minkowski sum of base polytopes of matroids.  In other words, for each $1\leq i\leq k$ the face $\Delta_{\mu_i}^{\overline\bu}$ is a base polytope of a matroid.  Thus, each $\Delta_{\mu_i}$ is a subdivision of $\bb({M_i})$ whose faces are all also matroids.  Thus, by the equivalence of (a) and (c) in \Cref{thm:Dressians}, each $\mu_i$ is a valuated matroid.

We will apply \Cref{thm:main2}.\ref{thm2:pt3} to prove the rest of the theorem.  In preparation, we first note that for a matroid $M$, one has $Q(M^*) = -Q(M) + \mathbf 1$. Hence, if $\mu$ is a valuated matroid, then the map $\Delta_\mu \to \Delta_{\mu^*}$ defined by $\mathcal F \mapsto -\mathcal F + \mathbf 1$ is a bijection.  Therefore, the duals $\mu_1^*, \ldots, \mu_k^*$ induce a flag matroidal subdivision $\Delta_{\sum_{i=1}^k \mu_i^*}$ of the flag matroid base polytope of $(M_k^*, \dots, M_1^*)$, because its faces are in bijection with the faces of $\Delta_w$ by $\mathcal  F \mapsto -\mathcal F + k\mathbf 1$.  

Now, let $\overline\bu \in \PP(\TT^{[n]})$, and let $S\subseteq [n]$ be the subset such that $\overline\bu \in T_S$.  Write $\bu= \bu' \times \infty^{[n]\setminus S}$.
Combining \Cref{lem:boundaryface} and \Cref{lem:Mconvexhopf} implies that for some $\bu''\in \RR^{[n]\setminus S}$, we have $\Delta_{\mu_i^*}^{\bu'\times \bu''} = \Delta_{\mu_i^*|_S}^{\bu'} \times \Delta_{\mu_i^*/_S}^{\bu''}$, and thus \Cref{thm:GGMSmore}.(2) implies that $\Delta_{\sum_{i=1}^k \mu_i^*|_{S}}$ is a flag matroidal subdivision.  \Cref{thm:Mconvexdualcplx} therefore implies that the sequence of faces $(\Delta_{\mu_1^*}^{\overline\bu}, \ldots, \Delta_{\mu_k^*}^{\overline\bu})$ form the dual of a flag matroid, that is, $(\Delta_{\mu_k^*}^{\overline\bu}, \ldots, \Delta_{\mu_1^*}^{\overline\bu})$ is a flag matroid.  By \Cref{lem:propagate}, if the matroid of $\Delta_{\mu_i^*}^{\overline\bu}$ is coloopless for some $1\leq i \leq k$, then so are $\Delta_{\mu_j^*}^{\overline\bu}$ for any $1\leq i<j\leq k$.  The desired inclusion $\overline\trop(\mu_i) \subseteq \overline\trop(\mu_j)$ for all $1\leq i < j \leq k$ now follows from \Cref{thm:main2}.\ref{thm2:pt3}, which states that $\overline{\trop}(\mu_i) = \{\overline{\bu} \in \PP(\TT^{[n]}) \mid \textnormal{matroid of } \Delta_{\mu_i^*}^{\overline{\bu}} \textnormal{ is coloopless}\}$.  
\end{proof}

We have now proven \Cref{thm:main}.  The equivalence \ref{thm:pt-tropIP}$\iff$\ref{thm:pt-subdiv} states that a mixed coherent subdivision of a base configuration of a flag matroid is flag matroidal if and only if the weights form a valuated flag matroid.  One can further ask whether all coherent flag matroidal subdivisions arise in this way.
Combining Theorems \ref{thm:mixed} and \ref{thm:troplinsubdiv} implies the following.

\begin{cor}\label{cor:coherentflagmixed}
Every coherent flag matroidal subdivision of a base polytope of a flag matroid arises from a valuated flag matroid.
\end{cor}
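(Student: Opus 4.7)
The plan is to combine the two theorems named in the statement, chained with \Cref{thm:tropquotient}. Let $w$ be a weight on the base configuration $\bb(\MM)$ of a flag matroid $\MM = (M_1, \ldots, M_k)$ whose induced coherent subdivision $\Delta_w$ is flag matroidal. My goal is to produce weights $\mu_1, \ldots, \mu_k$ on $\bb(M_1), \ldots, \bb(M_k)$ such that $\boldsymbol\mu = (\mu_1, \ldots, \mu_k)$ is a valuated flag matroid and $\Delta_{\sum_i \mu_i} = \Delta_w$.

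The first step is to verify that $\Delta_w$ is a mixed subdivision of $\bb(\MM)$, since this is the hypothesis of \Cref{thm:mixed}. By assumption each face $\mathcal F \in \Delta_w$ equals $\bb(\MM'_{\mathcal F})$ for a unique flag matroid $\MM'_{\mathcal F} = (M'_{\mathcal F, 1}, \ldots, M'_{\mathcal F, k})$, and hence decomposes uniquely as $\mathcal F = \bb(M'_{\mathcal F, 1}) + \cdots + \bb(M'_{\mathcal F, k})$ since the constituent matroids of a flag matroid are recoverable from its base polytope. Setting $\Delta_i := \{\bb(M'_{\mathcal F, i}) : \mathcal F \in \Delta_w\}$, closure under taking faces follows from the identity $\bigl(\sum_i \bb(M'_{\mathcal F, i})\bigr)^{\overline\bv} = \sum_i \bb(M'_{\mathcal F, i})^{\overline\bv}$ together with the same uniqueness of Minkowski decomposition. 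The remaining check—that each $\Delta_i$ genuinely covers $\bb(M_i)$ with properly meeting cells—is the main obstacle I anticipate, and it is where I expect the bulk of the technical work to lie. The key input will be that Minkowski decomposition of generalized permutohedra (\Cref{thm:GGMSmore}(1)) behaves well under taking faces, and that the polyhedral subdivision of $Q(\MM)$ projects coherently onto a subdivision of each Minkowski summand $Q(M_i)$.

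Once mixedness is established, the conclusion follows quickly. The auxiliary hypothesis of \Cref{thm:mixed} that $|\bb(M_i)| = 2$ whenever $\dim Q(M_i) = 1$ is automatic for matroid base configurations, so the theorem applies to produce weights $\mu_1, \ldots, \mu_k$ on $\bb(M_1), \ldots, \bb(M_k)$ with $\Delta_{\sum_i \mu_i} = \Delta_w$. Since $\Delta_{\sum_i \mu_i}$ is flag matroidal by our standing hypothesis, \Cref{thm:troplinsubdiv} directly applies and yields two conclusions: each $\mu_i$ is a valuated matroid, and the projective tropical linear spaces form a chain $\overline{\trop}(\mu_1) \subseteq \cdots \subseteq \overline{\trop}(\mu_k)$. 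Finally, \Cref{thm:tropquotient} (equivalently the implication \ref{thm:pt-troplin}$\implies$\ref{thm:pt-valflag} of \Cref{thm:main}) translates this chain of inclusions into the statement that $\boldsymbol\mu$ is a valuated flag matroid, completing the proof.
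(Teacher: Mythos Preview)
Your proposal is correct and follows essentially the same route as the paper: apply \Cref{thm:mixed} to produce weights $\mu_i$ with $\Delta_{\sum_i \mu_i} = \Delta_w$, then invoke \Cref{thm:troplinsubdiv} and translate via \Cref{thm:tropquotient}. The only difference is that you flag the mixedness of a flag matroidal subdivision as the ``main obstacle,'' whereas the paper dispatches this in the sentence immediately following the definition (``A flag matroidal subdivision of $\bb(\MM)$ is necessarily mixed''), treating it as essentially definitional since each face is by hypothesis a base configuration $\bb(M'_1) + \cdots + \bb(M'_k)$ and the constituent matroids are uniquely recoverable.
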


We now feature an extended illustration of \Cref{thm:main}.

\begin{eg}
\label{eg:pointline} Consider the tropical prevariety of $Fl(1,2;4)$, the (closure of) the flag Dressian of the flag matroid $\boldsymbol{U}_{1,2;4}:=(U_{1,4},U_{2,4})$. Compare this example to \cite[Example 4.3.19]{MS15}.

We embed the variety $Fl(1,2;4)$ inside $\mathbb{P}^5\times\mathbb{P}^3$, where the first factor has Pl\"{u}cker coordinates $P_{ij}$ while the second factor has Pl\"{u}cker coordinates $P_{i}$ for $i,j = 1,\ldots,4$ and $i < j$. The equations defining $FlDr(1,2;4)$ in this embedding are given by
\begin{align*}
  \langle & P_{14}P_{23} - P_{13}P_{24} + P_{12}P_{34},\ \ \ 
  P_{4}P_{23} - P_{3}P_{24} + P_{2}P_{34},\ \ \  P_{4}P_{13} - P_{3}P_{14} + P_{1}P_{34},\\
  & P_{4}P_{12} - P_{2}P_{14} + P_{1}P_{24},\ \ \  P_{3}P_{12} - P_{2}P_{13} + P_{1}P_{23} \rangle.  
\end{align*}
We compute the tropical prevariety defined by these equations to obtain (the affine cone of) the flag Dressian $FlDr(\boldsymbol{U}_{1,2;4})$ using the command \texttt{tropicalintersection} in the software \texttt{gfan} \cite{gfan}.  In its 10 dimensional ambient space, the affine cone of $FlDr(\boldsymbol{U}_{1,2;4})$ is a pure simplicial fan of dimension 7 with a 5 dimensional lineality space. Modulo its lineality space, it consists of 10 rays and 15 two-dimensional cones. Intersected with the sphere, we obtain Figure~\ref{fig:df124}.  This is also $Dr(2;5)$, in agreement with $\widehat{FlDr}(1,2;4) = \widehat{Dr}(2;5)$ as we will see in \Cref{cor:affineconesame}.

\begin{figure}[h]
\centering
\begin{minipage}{.5\textwidth}
  \centering
\includegraphics[height=1.5in]{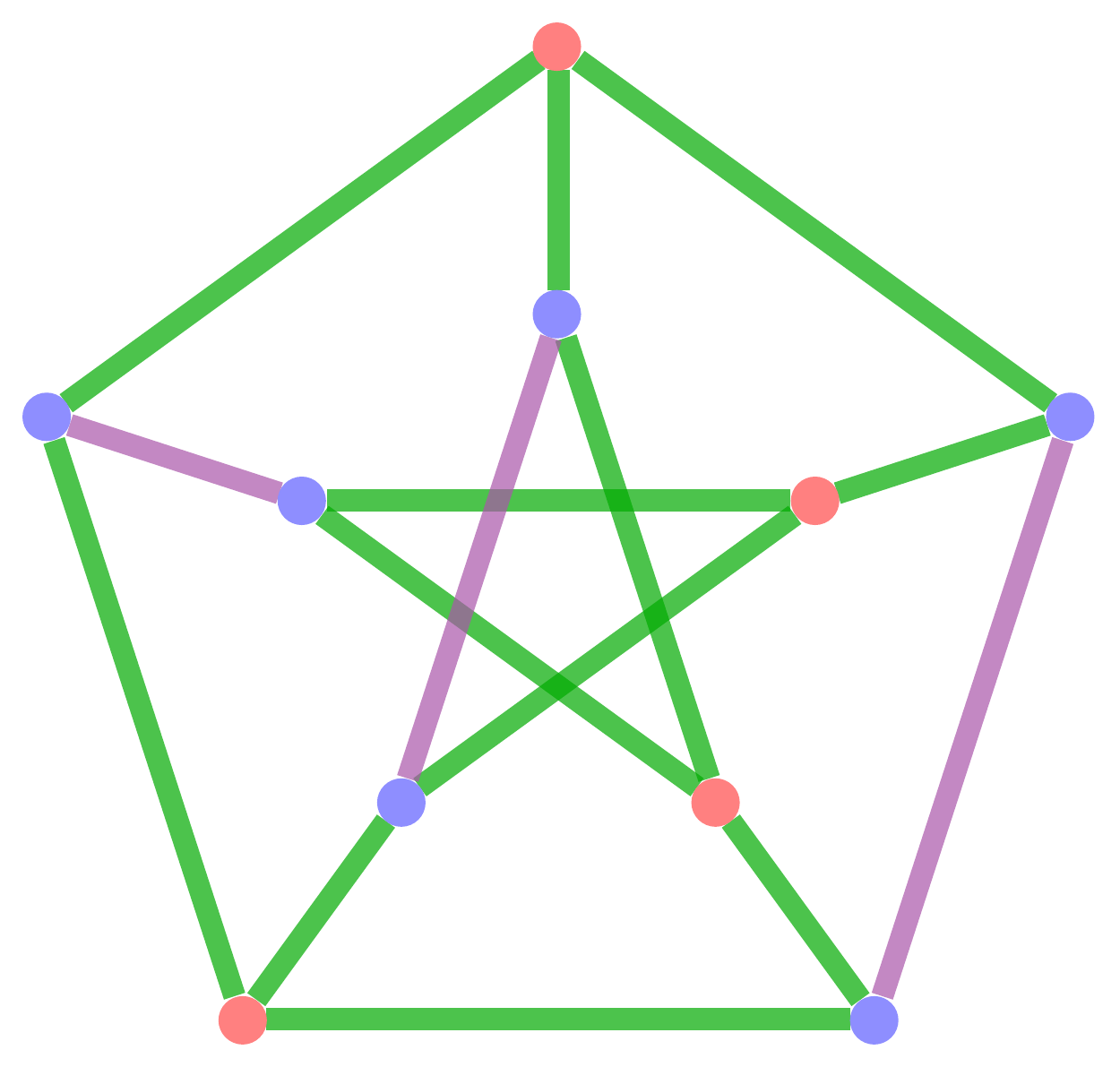}
  \captionof{figure}{The flag Dressian $FlDr(\UU_{1,2;4})$. 
  }
  \label{fig:df124}
\end{minipage}%
\begin{minipage}{.5\textwidth}
  \centering
      \includegraphics[height = 1.5 in]{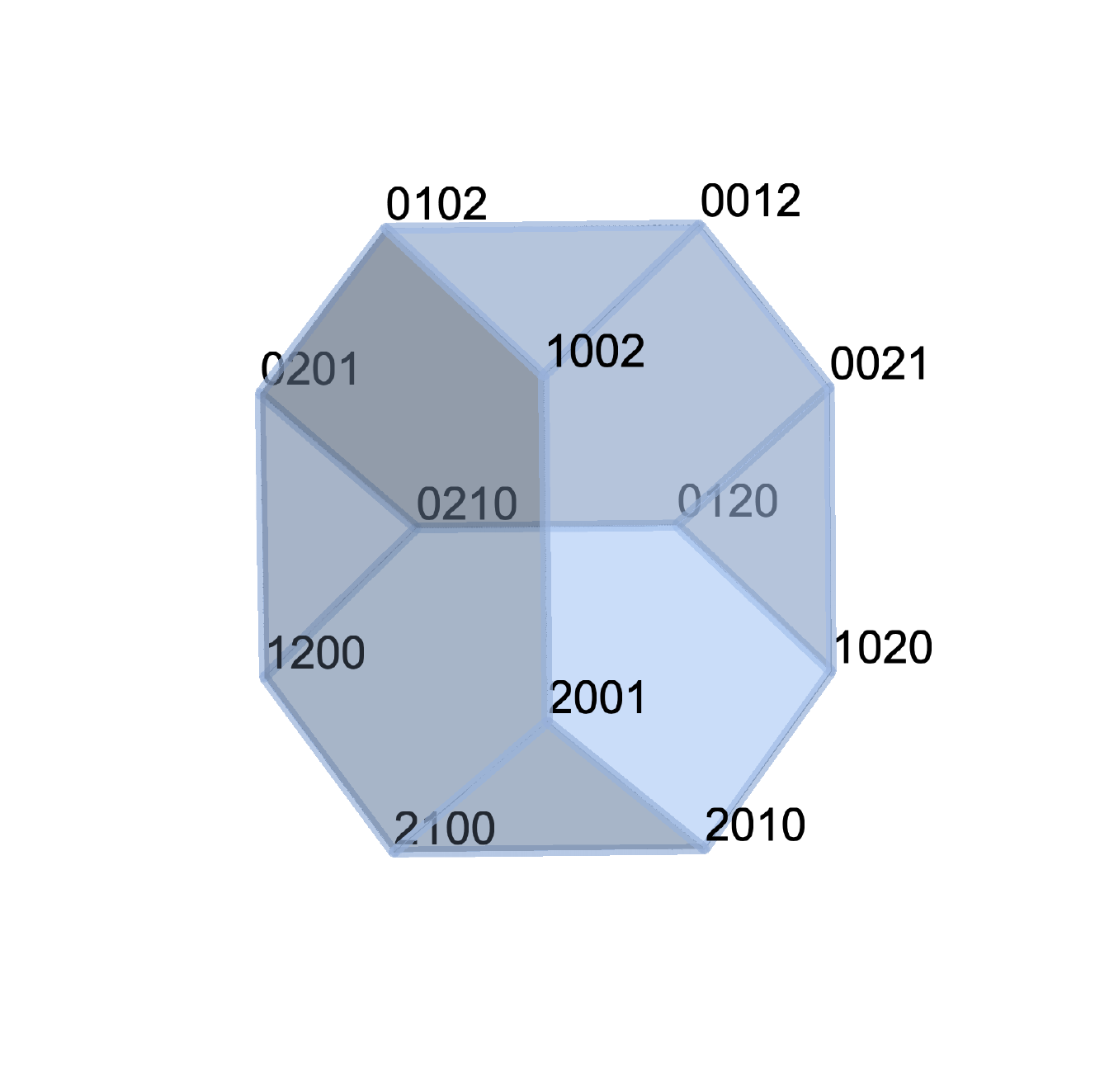}
  \captionof{figure}{The base polytope of the flag matroid $(\boldsymbol{U}_{1,2;4})$.}
  \label{fig:df124_poly}
\end{minipage}
\end{figure}

The base polytope $Q(\boldsymbol{U}_{1,2;4})$ is the \emph{truncated tetrahedron}, pictured in Figure~\ref{fig:df124_poly}. This is the orbit polytope
$
\text{Conv} \left \{
g \cdot (1,2,0,0) \subset \mathbb{R}^4 \ :\ g \in S_4
\right \}.
$
The subdivisions induced on $Q(\boldsymbol{U}_{1,2;4})$ by points in $FlDr(\boldsymbol{U}_{1,2;4})$ come in five types, as indicated in Figure \ref{fig:df124_subs}. These correspond to the colored edges and vertices in Figure \ref{fig:df124}. By Theorem \ref{thm:main} these are the subdivisions of $Q(\boldsymbol{U}_{1,2;4})$ into flag matroid polytopes. We display each subdivision with the corresponding flag of tropical linear spaces.

\begin{center}
\begin{figure}[h]
\begin{tabular}{c c c c c}
\includegraphics[width = 0.17 \linewidth]{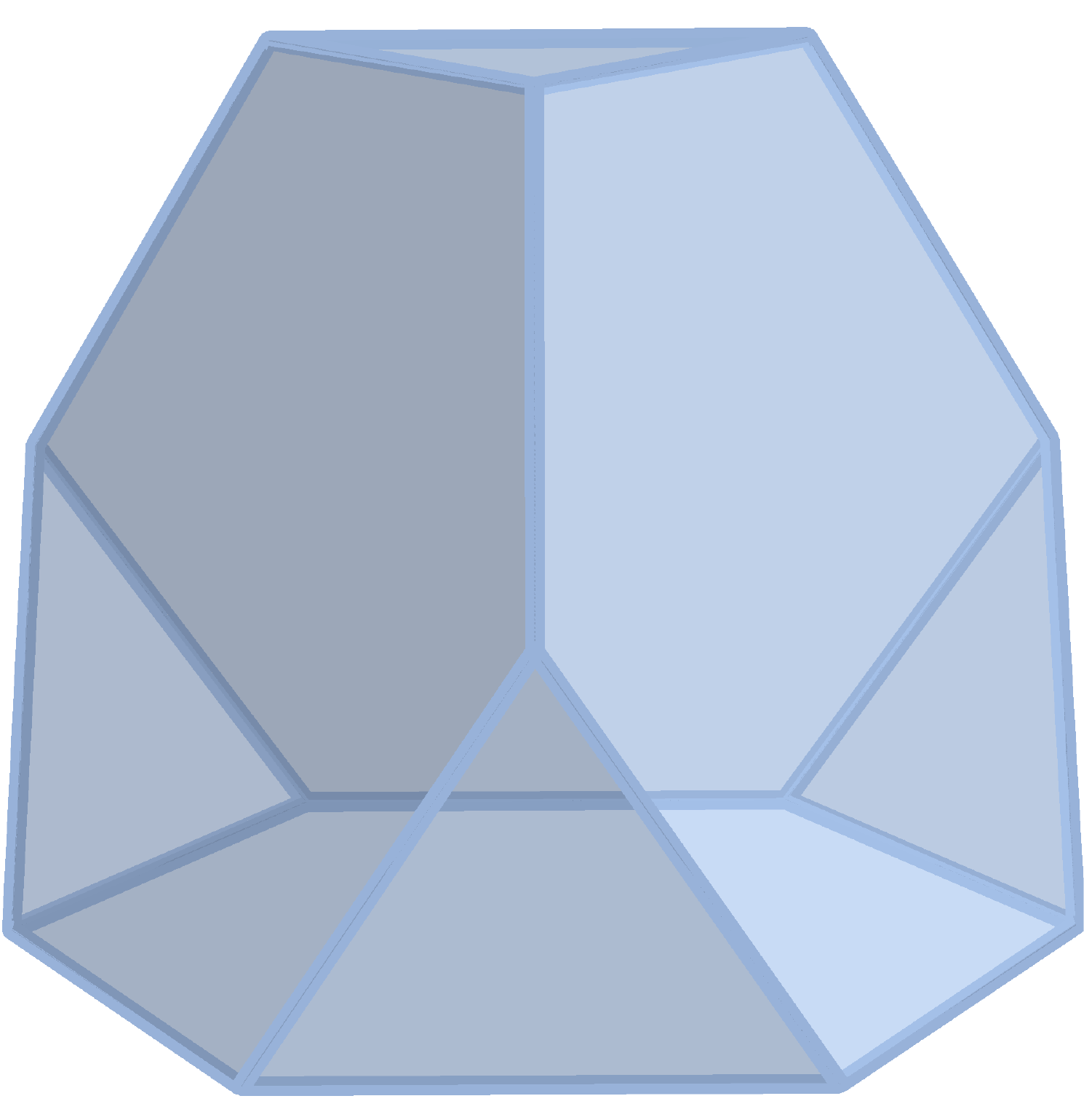} &
\includegraphics[width = 0.17 \linewidth]{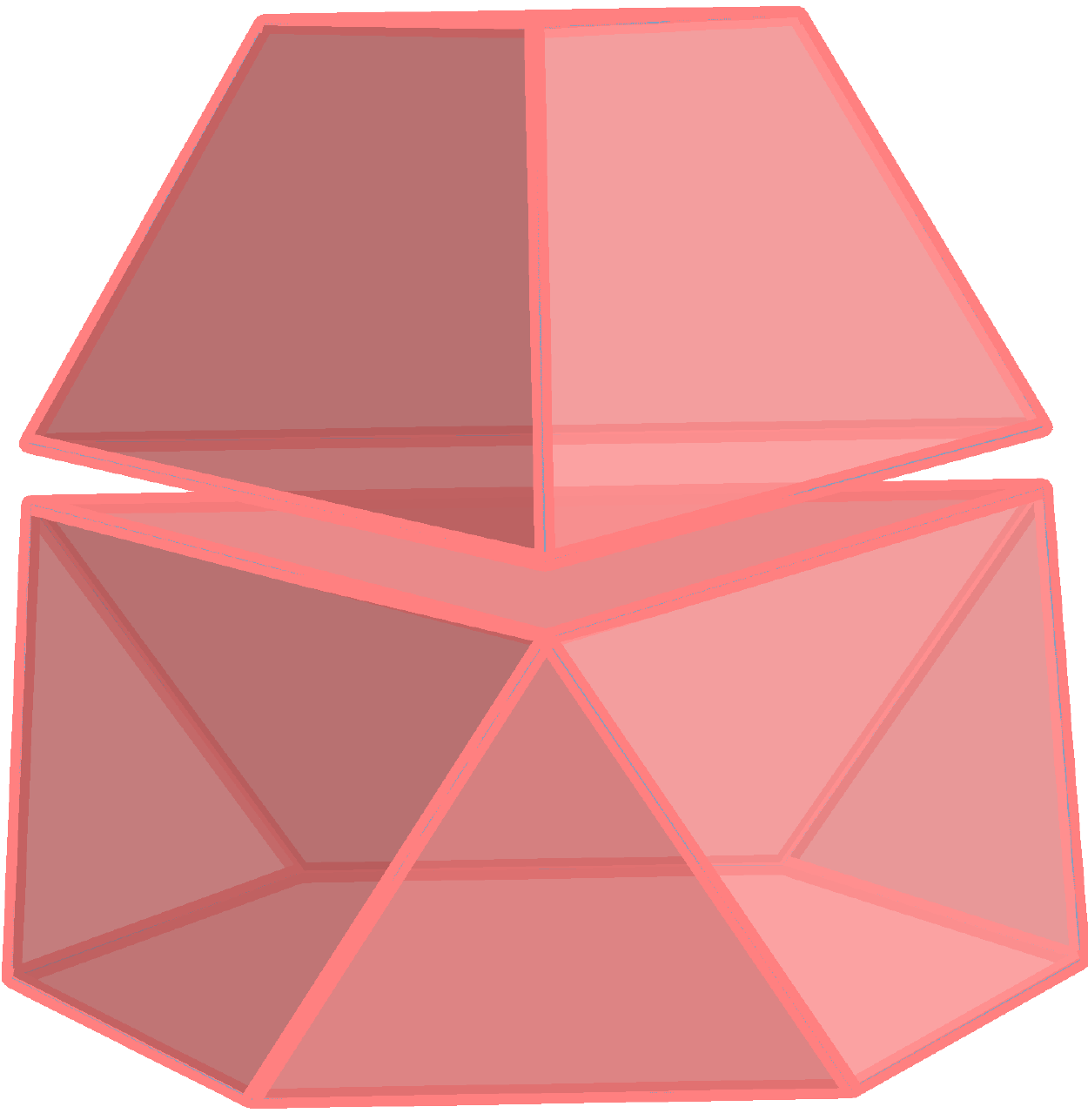} &
\includegraphics[width = 0.17 \linewidth]{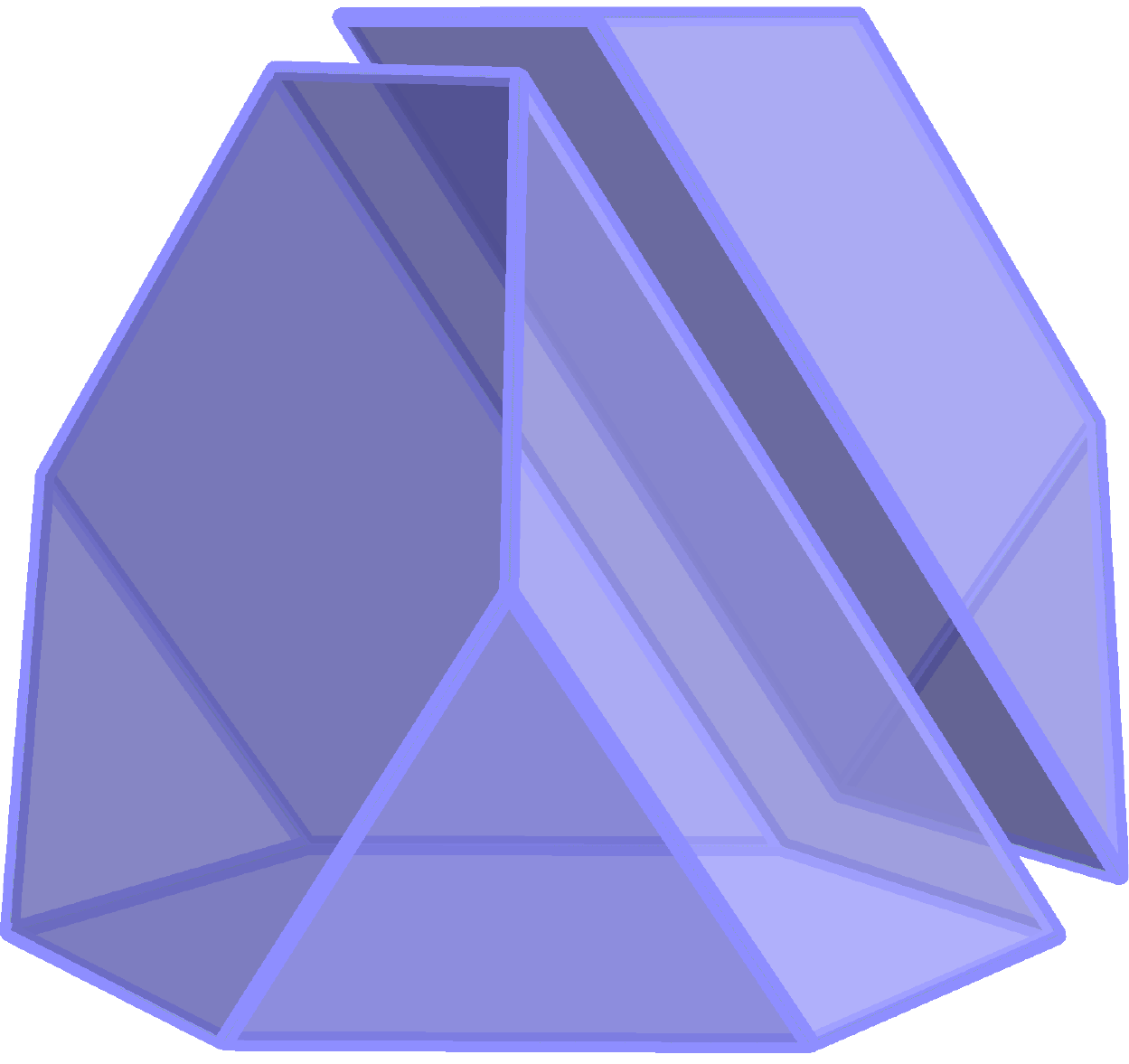} &
\includegraphics[width = 0.17 \linewidth]{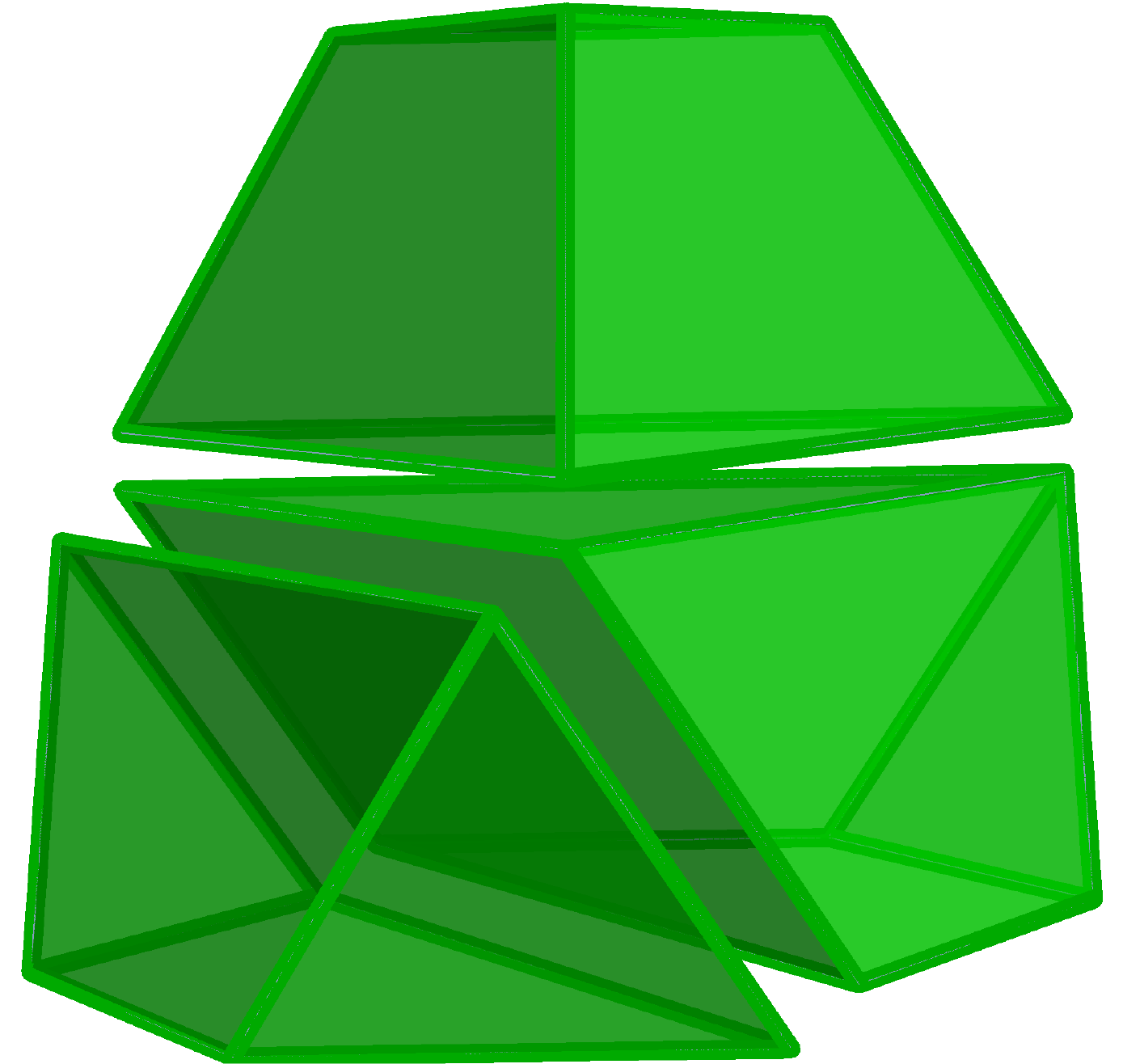} &
\includegraphics[width = 0.17 \linewidth]{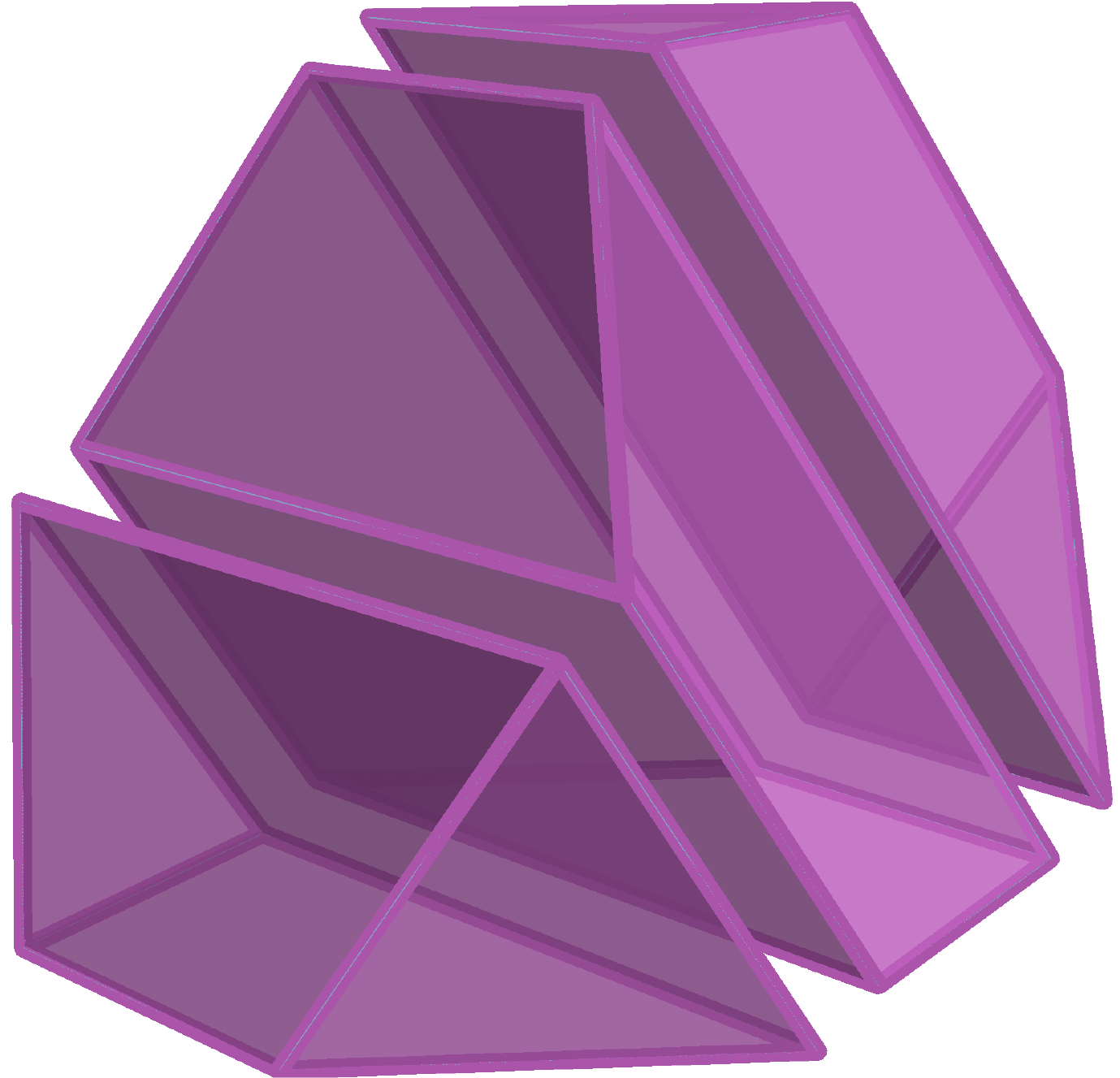} 
 \\
\includegraphics[width = 0.17 \linewidth]{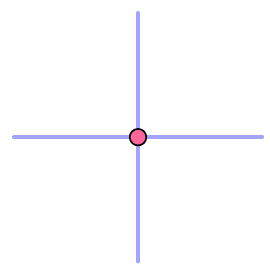} &
\includegraphics[height =1.1in,angle=180,origin=c]{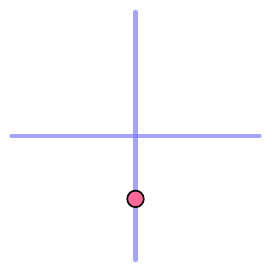}
&
\includegraphics[width = 0.17 \linewidth]{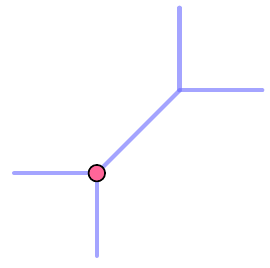}&
\includegraphics[width = 0.17 \linewidth]{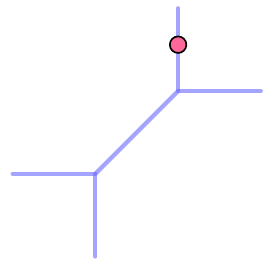}&
\includegraphics[width = 0.17 \linewidth]{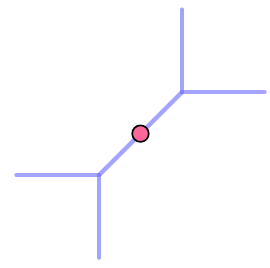}
\\
\includegraphics[width = 0.17 \linewidth]{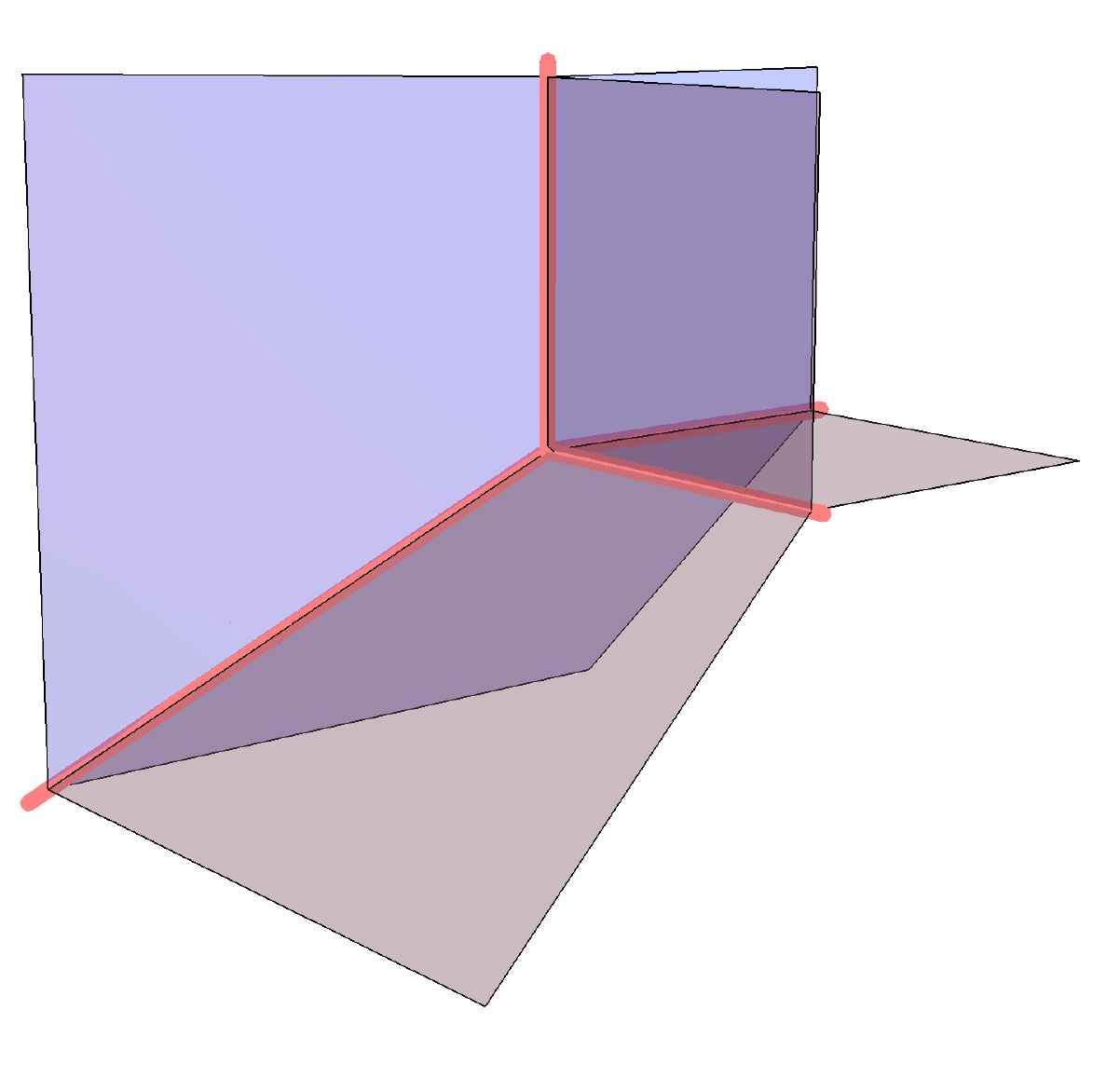} &
\includegraphics[width = 0.17 \linewidth]{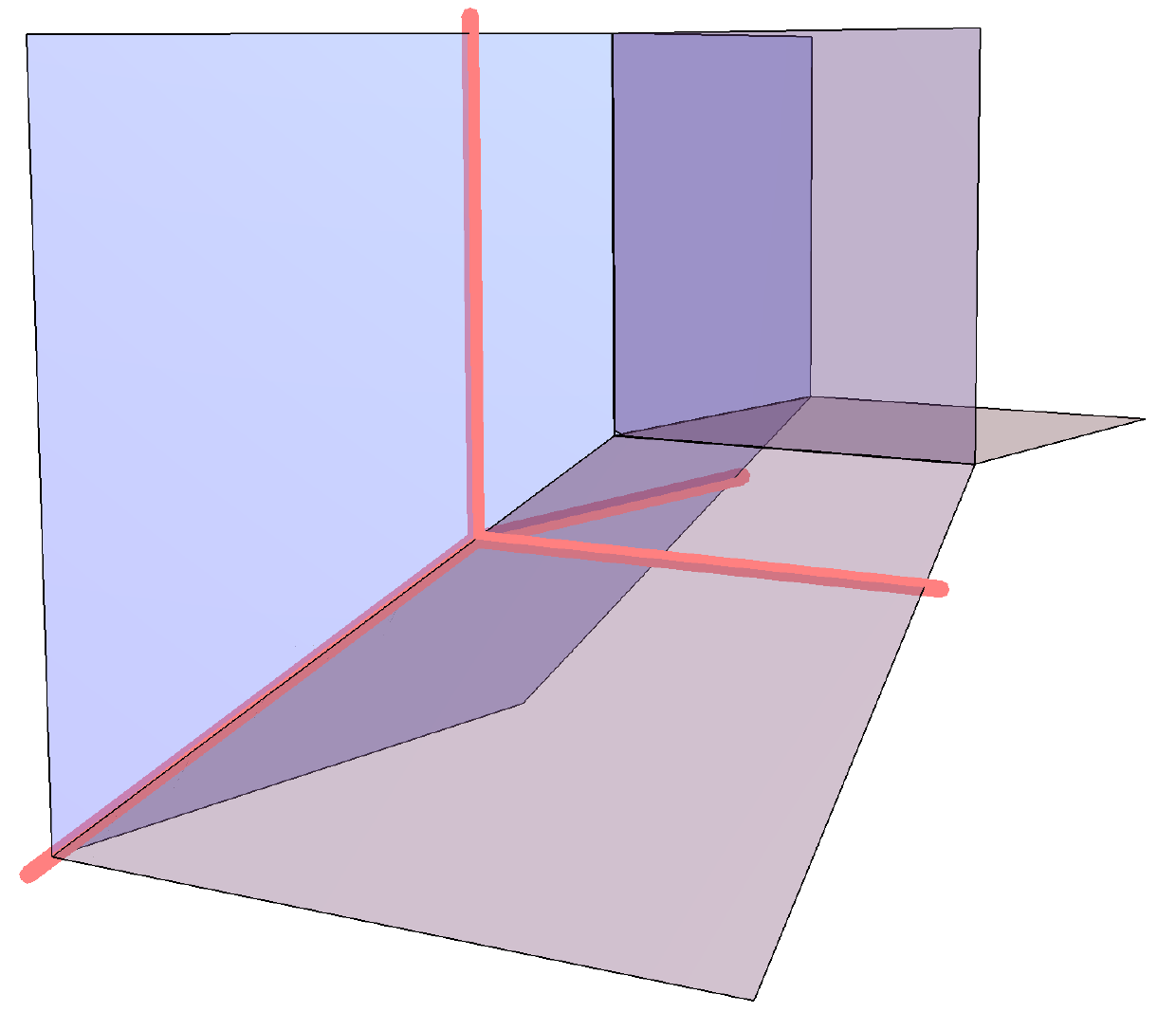} &
\includegraphics[width = 0.17 \linewidth]{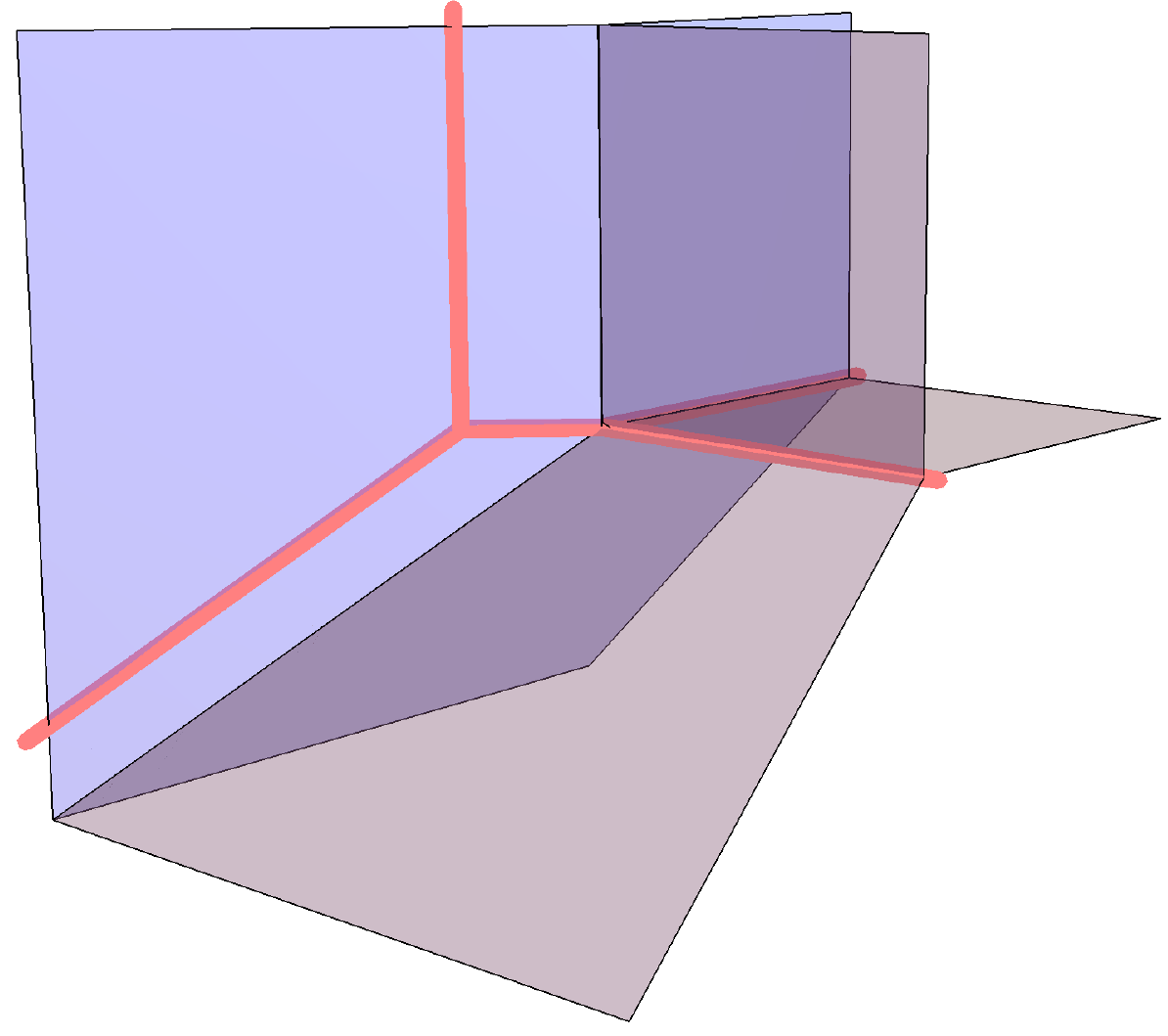} &
\includegraphics[width = 0.17 \linewidth]{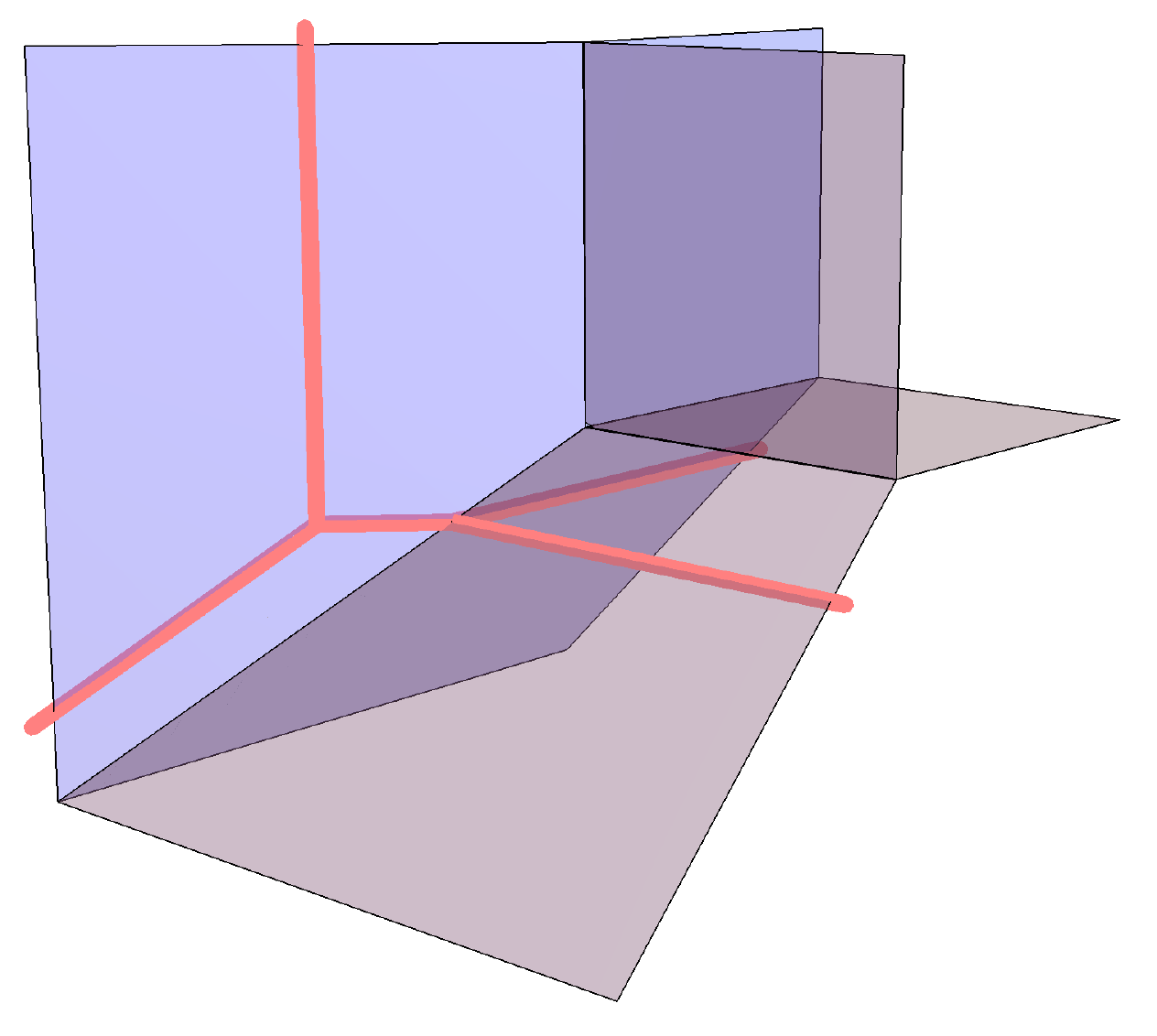} &
\includegraphics[width = 0.17 \linewidth]{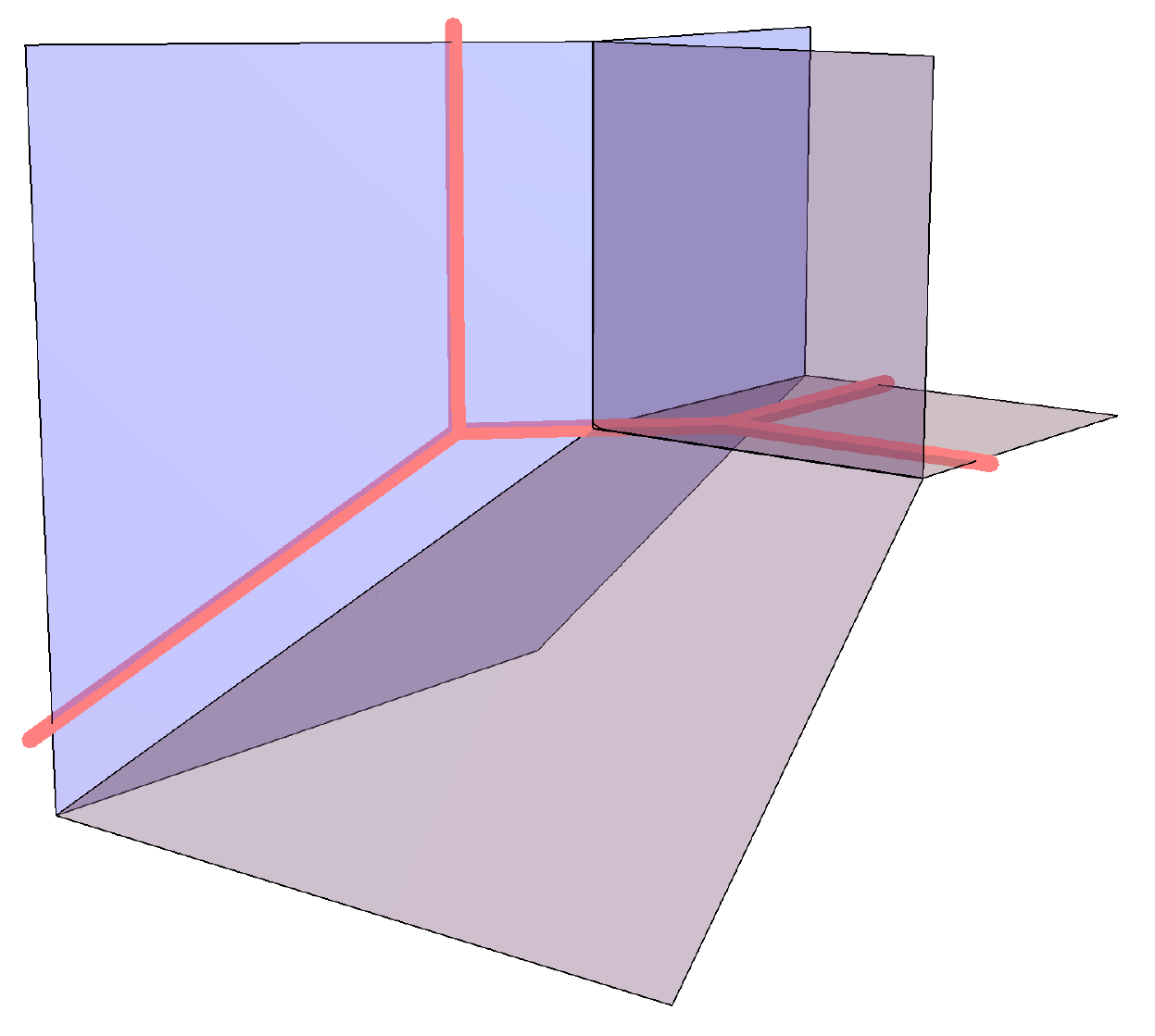} 
\end{tabular}
\caption{The subdivisions of $Q(\boldsymbol{U}_{1,2;4})$ induced by points in the flag Dressian $FlDr(\boldsymbol{U}_{1,2;4})$. The colors correspond to which points in $FlDr(\boldsymbol{U}_{1,2;4})$ induce that subdivision, see Figure \ref{fig:df124}. Each is displayed with the corresponding flag of a tropical point in a tropical line, and dually the tropical line in a tropical plane.}
\label{fig:df124_subs}
\end{figure}
\end{center}

\end{eg}

\section{Realizability}
\label{sec:realizability}

We now give an application of Theorem \ref{thm:main} to realizability. In Example \ref{eg:pointline} we saw that the flag Dressian $FlDr(1,2;4)$ is the Petersen graph, which is the same as the Dressian $Dr(2;5)$. In Theorem \ref{thm:fibration}, we explain this equality. In Theorem \ref{thm:Fl=FlDr} we see that every valuated flag matroid on ground set of size at most $5$ is realizable. We conclude with two examples. The first gives an interpretation of the tropicalization of the complete flag $Fl(1,2,3;4)$ as parameterizing two points in a tropical line. The second gives an example of non-realizability for a flag matroid on 6 elements.

\subsection{Relating Dressians and flag Dressians}
\label{subsec:drandfldr}

We begin by recalling a classical fact about matroid quotients with rank difference 1.  See \cite[\S7.3]{Oxl11} for a proof.

\begin{prop}\label{prop:elemquot}
Let $M'\twoheadleftarrow M$ be a matroid quotient on $[n]$ where the ranks of $M'$ and $M$ differ by 1.  Let $[\widetilde n] := \{0,1, \ldots, n\}$.  Then the collection of subsets of $[\widetilde n]$
\[
\bb(\widetilde{M}) := \{I'\cup 0 \mid I'\in \bb(M')\} \cup  \bb(M)
\]
is a set of bases of a matroid $\widetilde M$ on $[\widetilde n]$.  The matroid $\widetilde M$ is the unique matroid on $[\widetilde n]$ satisfying $M' = \widetilde{M}/_0$ and $M = \widetilde{M}\setminus_0$.
\end{prop}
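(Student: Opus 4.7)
The plan is to construct $\widetilde M$ via its rank function rather than its bases, which makes the matroid axioms straightforward to verify and the stated basis description a clean consequence. Concretely, I would define
\[
\operatorname{rk}_{\widetilde M}(A) := \begin{cases} \operatorname{rk}_M(A) & \textnormal{if } 0 \notin A, \\ \operatorname{rk}_{M'}(A \setminus 0) + 1 & \textnormal{if } 0 \in A,\end{cases}
\]
for $A \subseteq [\widetilde n]$, and verify that this is a matroid rank function. Normalization and the unit-increase property follow from the corresponding properties of $\operatorname{rk}_M$ and $\operatorname{rk}_{M'}$ combined with the pointwise bound $\operatorname{rk}_M - \operatorname{rk}_{M'} \in \{0,1\}$, which is a consequence of \Cref{defn:matroidquotient}.(1) applied to the pair $\emptyset \subseteq A \subseteq [n]$ together with $\operatorname{rk}_M([n]) - \operatorname{rk}_{M'}([n]) = 1$. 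Monotonicity uses the same bound in the mixed case $0 \notin A \subseteq B \ni 0$.

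The only substantive step is submodularity. I would split into cases based on the membership of $0$ in $A$ and $B$. The cases $0 \notin A \cup B$ and $0 \in A \cap B$ reduce immediately to submodularity of $\operatorname{rk}_M$ and $\operatorname{rk}_{M'}$, respectively. The mixed case (say $0 \in A$, $0 \notin B$) is where the quotient hypothesis genuinely enters. Writing $A' = A \setminus 0$, after cancelling the constant $+1$ on each side the inequality reduces to
\[
\operatorname{rk}_{M'}(A') + \operatorname{rk}_M(B) \geq \operatorname{rk}_{M'}(A' \cup B) + \operatorname{rk}_M(A' \cap B),
\]
which follows by combining the defining quotient inequality $\operatorname{rk}_{M'}(A' \cup B) - \operatorname{rk}_{M'}(A') \leq \operatorname{rk}_M(A' \cup B) - \operatorname{rk}_M(A')$ for the nested pair $A' \subseteq A' \cup B$ with the submodularity of $\operatorname{rk}_M$ applied to $A'$ and $B$. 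This mixed case is the main obstacle, though it dissolves once one writes down the chain of inequalities.

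Once $\widetilde M$ is known to be a matroid, the identifications $\widetilde M \setminus_0 = M$ and $\widetilde M /_0 = M'$ are direct rank-function computations: for $A \subseteq [n]$, $\operatorname{rk}_{\widetilde M \setminus_0}(A) = \operatorname{rk}_{\widetilde M}(A) = \operatorname{rk}_M(A)$, and $\operatorname{rk}_{\widetilde M /_0}(A) = \operatorname{rk}_{\widetilde M}(A \cup \{0\}) - \operatorname{rk}_{\widetilde M}(\{0\}) = (\operatorname{rk}_{M'}(A) + 1) - 1 = \operatorname{rk}_{M'}(A)$. The asserted description of $\bb(\widetilde M)$ follows: since $\operatorname{rk}(\widetilde M) = r + 1$, a subset $B \subseteq [\widetilde n]$ of size $r+1$ is a basis iff $\operatorname{rk}_{\widetilde M}(B) = r+1$, and this holds in the two subcases (whether or not $0 \in B$) exactly as claimed in the statement. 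Uniqueness is then immediate, since any matroid $\widetilde N$ on $[\widetilde n]$ with the same deletion and contraction at $0$ has rank $r+1$ and thus the same basis description, so $\widetilde N = \widetilde M$.
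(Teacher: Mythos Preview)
Your proof is correct. The paper does not prove this proposition at all; it simply cites \cite[\S7.3]{Oxl11} and moves on. Your rank-function construction is the standard way to establish this classical fact (it is essentially the Higgs lift, the unique single-element extension witnessing an elementary quotient), and every step checks out: the pointwise bound $\operatorname{rk}_M - \operatorname{rk}_{M'} \in \{0,1\}$ follows from the quotient inequality applied to $\emptyset \subseteq A$ and to $A \subseteq [n]$, the mixed submodularity case is handled correctly, and the deletion/contraction and basis identifications are immediate. For uniqueness you might note explicitly that the rank hypotheses force $0$ to be neither a loop nor a coloop in $\widetilde N$ (otherwise $\widetilde N/_0 = \widetilde N\setminus_0$ or $\operatorname{rk}(\widetilde N\setminus_0) < r+1$), but this is implicit in your rank computation and the argument stands.
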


We extend this fact to valuated matroid quotients with rank difference 1.  Let us first consider the following partially defined map
\[
\PP(\TT^{[\widetilde n]\choose r+1}) \dashrightarrow \PP(\TT^{[n] \choose r}) \times \PP(\TT^{[n]\choose r+1}) \textnormal{ defined by } (\bu_I)_{I\in {[\widetilde n]\choose r+1}} \mapsto (\bu_{I\setminus 0})_{I\ni 0} \times (\bu_I)_{I\not\ni 0}.
\]
This map is well-defined on the set 
\[
\Omega(r+1;\widetilde n) := \left\{\overline{\bu} \in \PP \big(\kk^{[\widetilde n]\choose r+1} \big) \middle | \begin{matrix} \bu_I \neq \infty \textnormal{ for some $I \in \textstyle \binom{[\widetilde n]}{r+1}$ with $I\ni 0$},\\ \textnormal{ and }\bu_{J} \neq \infty \textnormal{ for some $J\in \textstyle \binom{[\widetilde n]}{r+1}$ with $J\not\ni 0$}\end{matrix}\right\}.
\]
If $\widetilde \mu$ is a valuated matroid on $[\widetilde n]$ for which the map is well-defined, then its image under the map is the product of the two valuated matroids $\mu/_0$ and $\mu\setminus_0$ on $[n]$, which form a valuated matroid quotient by \Cref{cor:firstprop}.(3).  The following theorem generalizes \Cref{prop:elemquot} by showing that every valuated matroid quotient of rank difference 1 arises in this way.

\begin{thm}\label{thm:fibration}
Consider the Dressian $Dr(r+1;n+1)$ as a subset of $\PP(\TT^{[\widetilde n]\choose r+1})$.  The map 
\[
\Omega(r+1; \widetilde n) \cap  Dr(r+1;n+1) \to FlDr(r,r+1;n)
\]
induced by the partially defined map
$\PP(\TT^{[\widetilde n]\choose r+1}) \dashrightarrow \PP(\TT^{[n] \choose r}) \times \PP(\TT^{[n]\choose r+1})$
is surjective, and the fiber over a point $(\mu',\mu)\in FlDr(r,r+1;n)$ is
\[
\{(a\odot \mu' \oplus b\odot \mu) \in Dr(r+1;n+1) \mid a,b\in \RR\} \simeq \RR^2/\RR(1,1),
\]
where $\mu'$ and $\mu$ are considered as elements of $\PP(\TT^{[\widetilde n]\choose r+1})$ by 
\[
\mu'(I) = \begin{cases}
\mu'(I\setminus 0) & \textnormal{if $I \ni 0$}\\
\infty & \textnormal{otherwise}
\end{cases}
\quad \textnormal{and} \quad
\mu(J) = \begin{cases}
\mu(J) & \textnormal{if $J\not\ni 0$}\\
\infty & \textnormal{otherwise}.
\end{cases}
\]
\end{thm}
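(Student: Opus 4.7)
The plan is in three parts: verifying that the map lands in $FlDr(r,r+1;n)$, establishing surjectivity, and identifying the fiber.  The first is essentially tautological: if $\widetilde\mu \in \Omega(r+1;\widetilde n)\cap Dr(r+1;n+1)$, then the $\Omega$-condition guarantees that $0$ is neither a loop nor a coloop of the underlying matroid, so $\widetilde\mu/_0$ and $\widetilde\mu\setminus_0$ are valuated matroids on $[n]$ of ranks $r$ and $r+1$.  They form a valuated matroid quotient by \Cref{cor:firstprop}.(3), and hence a point of $FlDr(r,r+1;n)$ by \Cref{prop:FlDr=valflagmat}.

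For surjectivity, given $(\mu',\mu) \in FlDr(r,r+1;n)$, I take $a = b = 0$ and define $\widetilde\mu \in \TT^{\binom{[\widetilde n]}{r+1}}$ by $\widetilde\mu(I) = \mu'(I\setminus 0)$ when $0\in I$ and $\widetilde\mu(I) = \mu(I)$ otherwise.  By construction the image of $\widetilde\mu$ is $(\mu',\mu)$, and its support meets both slices $\{I\ni 0\}$ and $\{I\not\ni 0\}$, so $\widetilde\mu$ lies in $\Omega$.  The remaining task is to verify $\widetilde\mu \in Dr(r+1;n+1)$, for which I will use the matroidal-subdivision criterion of \Cref{thm:Dressians}.(c).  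Fix $\bu = (u_0,\bv)\in \RR^{[\widetilde n]}$ with $\bv\in \RR^{[n]}$.  A direct computation shows that $\Delta_{\widetilde\mu}^{\overline\bu}$ decomposes into the two slices $\{I_0\cup 0 : I_0\in \Delta_{\mu'}^{\overline\bv}\}$ and $\Delta_\mu^{\overline\bv}$, and the whole face is one of these sets or their union according to the sign of $u_0 + m'(\bv) - m(\bv)$, where $m'(\bv),m(\bv)$ are the minima of $\mu'(\cdot)+\langle\bv,\be_\cdot\rangle$ and $\mu(\cdot)+\langle\bv,\be_\cdot\rangle$ respectively.  The single-slice cases yield matroids having $0$ as a coloop or as a loop; the subtle case is their union.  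There $\Delta_{\mu'}^{\overline\bv}$ and $\Delta_\mu^{\overline\bv}$ are the underlying matroids of the initial valuated matroids of $\mu'$ and $\mu$ after shifting by $\bv$, which by the two observations in the proof of \Cref{thm:valflagsubdiv} still form a matroid quotient of rank difference $1$.  \Cref{prop:elemquot} then glues them into the bases of a single matroid on $[\widetilde n]$, completing the verification.

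For the fiber, any preimage of $(\mu',\mu)$ must agree with $\mu'(I\setminus 0)$ on the slice $0\in I$ up to a single additive constant $a$ and with $\mu(I)$ on the slice $0\notin I$ up to a single additive constant $b$, since $FlDr$ quotients the two factors independently.  Every preimage is thus of the form $a\odot\mu'\oplus b\odot\mu$, which is still a valuated matroid by the same subdivision argument (the shifts only translate the threshold between the two slices).  Two such representatives coincide in $\PP(\TT^{\binom{[\widetilde n]}{r+1}})$ iff $(a,b)$ and $(a',b')$ differ by a common additive constant, giving the fiber $\RR^2/\RR(1,1)\simeq\RR$.

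The main obstacle is the union case in the subdivision argument: two arbitrary matroids lying on the $0\in I$ and $0\notin I$ slices of $\bb(\widetilde M)$ need not combine into a single matroid.  It is precisely the valuated flag matroid hypothesis on $(\mu',\mu)$ that yields, through the initialization observation, the rank-difference-one matroid quotient $\Delta_{\mu'}^{\overline\bv}\twoheadleftarrow \Delta_\mu^{\overline\bv}$ for every $\bv$, enabling the application of \Cref{prop:elemquot}.
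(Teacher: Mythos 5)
Your proof is correct and follows essentially the same route as the paper: well-definedness via Corollary 4.2.4.(3), surjectivity via the matroidal-subdivision criterion of Theorem 3.1.3, and the fiber computation by inspecting the slices. The one structural difference is that where the paper invokes the Cayley trick (\cite[Theorem 9.2.16]{DLRS10}) to translate faces of $\Delta_{\widetilde\mu}$ into faces of the mixed subdivision $\Delta_{\mu'+\mu}$ and then applies Theorem 4.4.3 as a black box, you unpack that translation by hand — computing $\Delta_{\widetilde\mu}^{\overline\bu}$ directly from the threshold $u_0 + m'(\bv) - m(\bv)$ and then applying the two observations from the proof of Theorem 4.4.3 together with Proposition 4.1.1 in the three cases. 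Your version is slightly more self-contained and makes explicit the coloop/loop/gluing trichotomy, whereas the paper's version is shorter by leaning on the Cayley trick; the underlying content (matroid quotient on faces, glued via Proposition 4.1.1) is identical.
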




\begin{proof}
We have established that the map is well-defined: it sends a point of $Dr(r+1;n+1)$ to a point of $Fl(r,r+1;n)$ by \Cref{cor:firstprop}.(3).  Consider the fiber over a point $(\mu',\mu) \in FlDr(r,r+1;n)$, and write $(M',M)$ for the underlying flag matroid.  Let $\widetilde M$ be the matroid on $[\widetilde n]$ given by \Cref{prop:elemquot}.  For any $a,b\in \RR$, we need to show that $\widetilde \mu: {[\widetilde n]\choose r+1}\to \TT$ defined by
\[
\widetilde \mu(I) := \begin{cases} a+\mu'(I\setminus 0) & \textnormal{if $I\ni 0$}\\
b+\mu(I) & \textnormal{if $I\not\ni 0$}
\end{cases}
\]
is a valuated matroid.  By the equivalence of (b) and (c) in \Cref{thm:Dressians}, it suffices to show that the induced subdivision $\Delta_{\widetilde \mu}$ of the base polytope $Q(\widetilde M)$ consists only of base polytopes of matroids.

The base polytope $Q(\widetilde M)\subset \RR^{[\widetilde n]} = \RR\times \RR^{[n]}$ is the convex hull of $\{\be_0\}\times Q(M')$ and $\{0\} \times Q(M)$, so it is equivalent to the Cayley polytope of $Q(M')$ and $Q(M)$.  Thus, by the Cayley trick \cite[Theorem 9.2.16]{DLRS10}, the faces of the subdivision $\Delta_{\widetilde \mu}$ of $Q(\widetilde M)$ are in bijection with the faces of the subdivision $\Delta_{(a+\mu')+(b+\mu)}$ of $\bb(M') + \bb(M)$.  The subdivisions $\Delta_{(a+\mu')+(b+\mu)}$ and $\Delta_{\mu'+\mu}$ are the same, and by \Cref{thm:valflagsubdiv} each face of $\Delta_{\mu'+\mu}$ is a Minkowski sum of base polytopes of two matroids that form a matroid quotient.  Hence, we conclude from \Cref{prop:elemquot} that each face of $\Delta_{\widetilde \mu}$ is a base polytope of a matroid.
\end{proof}

\Cref{thm:fibration} is a tropical analogue of the following geometry.

\begin{rem}
Let $[\widetilde n] := \{0,1,\ldots, n\}$, and let $\{P_I \mid I\in {[\widetilde n]\choose r+1}\}$ be the Pl\"ucker coordinates of the embedding $Gr(r+1;n+1)\hookrightarrow \PP(\kk^{[\widetilde n]\choose r+1})$. Consider the rational map \[
\PP(\kk^{[\widetilde n]\choose r+1}) \dashrightarrow \PP(\kk^{[n] \choose r}) \times \PP(\kk^{[n]\choose r+1}) \textnormal{ where } (P_I)_{I\in {[\widetilde n]\choose r+1}} \mapsto (P_{I\setminus 0})_{I\ni 0}\times (P_I)_{I\not\ni 0}.
\]
With $Fl(r,r+1;n)$ embedded in $\PP(\kk^{[n] \choose r}) \times \PP(\kk^{[n]\choose r+1})$, this gives a rational map $Gr(r+1,n+1) \dashrightarrow Fl(r,r+1;n)$. 
The fiber over a point $(P_{I'})\times (P_J)\in Fl(r,r+1;n)$ is
\[
\{(aP_{I'\cup 0}, bP_J) \in Gr(r+1;n+1) \mid a,b\in \kk^*\} \simeq (\kk^*)^2/\kk^*,
\]
so that the map is a $\kk^*$-fibration.  \Cref{thm:fibration} shows that a similar map in the tropical setting is an $\RR$-fibration.
\end{rem}

\Cref{thm:fibration} relates Dressians and flag Dressians by their affine cones.

\begin{defn}
The \textbf{affine cone} of a projective tropical prevariety $X \subset \PP(\TT^E)$ is
\[
\widehat X := \left\{ \bu \in \TT^E \setminus\{\infty^E\} \mid \overline{\bu}\in X\right\} \cup \{\infty^E\}.
\]
Affine cones of multi-projective tropical prevarieties are similarly defined.
\end{defn}

\begin{cor}\label{cor:affineconesame}
Under the identification $\TT^{n+1\choose r+1} \simeq \TT^{n\choose r} \times \TT^{n\choose r+1}$, the affine cones $\widehat{Dr}(r+1;n+1)$ and $\widehat{FlDr}(r,r+1;n)$ of $Dr(r+1;n+1)$ and $FlDr(r,r+1;n)$ are identical. 
\end{cor}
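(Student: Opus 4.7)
The plan is to explicitly unpack the definitions of the two affine cones and show that the natural identification in the statement realizes the equality directly. The bijection $\TT^{\binom{[n+1]}{r+1}} \simeq \TT^{\binom{[n]}{r}} \times \TT^{\binom{[n]}{r+1}}$ sends $\widetilde\mu$ to the pair $(\mu',\mu)$ where $\mu'(I') := \widetilde\mu(I' \cup \{0\})$ for $I' \in \binom{[n]}{r}$ and $\mu(J) := \widetilde\mu(J)$ for $J \in \binom{[n]}{r+1}$. Restricted to $\Omega(r+1;\widetilde n)$, this is the partially defined map used in \Cref{thm:fibration}, so the strategy is to apply \Cref{thm:fibration} on the open stratum and handle the boundary strata separately.

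For the open part, where both $\mu'$ and $\mu$ have some finite entry, \Cref{thm:fibration} supplies both directions: the forward containment is the surjectivity statement, and the backward containment is obtained by specializing the fiber description at $a = b = 0$, which produces exactly the lifted vector $\widetilde\mu$.

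For the boundary strata, we treat the symmetric cases where $\mu' \equiv \infty^{\cdot}$ or $\mu \equiv \infty^{\cdot}$ (together with the all-$\infty$ element, which lies in both cones by convention). Suppose $\mu' \equiv \infty$. Then every tropical incidence-Pl\"ucker relation $\bigoplus_{j' \in J' \setminus I'} \mu'(I' \cup j') \odot \mu(J' \setminus j')$ and every tropical Grassmann-Pl\"ucker relation among $\mu'$-variables collapses to the all-$\infty$ set, hence is satisfied by the convention in \Cref{defn:flagDressian}, so $(\mu',\mu) \in \widehat{FlDr}(r,r+1;n)$ precisely when $\mu$ is a valuated matroid of rank $r+1$ on $[n]$. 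On the Dressian side, the lifted $\widetilde\mu$ has $\widetilde\mu(I) = \infty$ for every $I \ni 0$, forcing $0$ to be a loop of the underlying matroid of $\widetilde\mu$; the tropical Grassmann-Pl\"ucker relations involving any index containing $0$ are then vacuously satisfied, while the relations with $I, J \not\ni 0$ reduce to the Grassmann-Pl\"ucker relations for $\mu$. Hence $\widetilde\mu \in \widehat{Dr}(r+1;n+1)$ is also equivalent to $\mu$ being a valuated matroid, which matches the flag Dressian side. The case $\mu \equiv \infty$ is symmetric, with $0$ now being a coloop of the underlying matroid.

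The main obstacle is the careful treatment of degenerate valuations at the boundary, where the partial map of \Cref{thm:fibration} is undefined and its fiber description does not directly apply. The resolution is the observation that $0$ being a loop or coloop in the underlying matroid of $\widetilde\mu$ corresponds exactly to one of $\mu'$ or $\mu$ being identically $\infty$, and under either condition the defining tropical polynomial relations on both sides collapse compatibly to just the nontrivial component being a valuated matroid.
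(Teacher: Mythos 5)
Your proof is correct and takes essentially the same route as the paper: split into the boundary case (one of $\mu', \mu$ all-$\infty$, handled by direct inspection of how the defining tropical relations collapse) and the open case handled via \Cref{thm:fibration}. One small terminological slip: for the containment $\widehat{Dr}(r+1;n+1) \subseteq \widehat{FlDr}(r,r+1;n)$ you invoke the ``surjectivity statement'' of \Cref{thm:fibration}, but what is actually used is the \emph{well-definedness} of the map (that its image lands in $FlDr(r,r+1;n)$), which the paper cites directly as \Cref{cor:firstprop}.(3).
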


\begin{proof}
Let $(\mu',\mu) \in \TT^{n\choose r} \times \TT^{n\choose r+1}$.  First consider the case where $\mu' = \infty^{\binom{n}{r}}$ or $\mu = \infty^{\binom{n}{r+1}}$.  Then $(\mu',\mu) \in \widehat{FlDr}(r,r+1;n)$ if and only if $\mu$ is a valuated matroid of rank $r+1$ on $n+1$ elements where the element $0$ is a loop (or respectively, $\mu'$ as a valuated matroid where $0$ is a coloop).  In other words $(\mu',\mu) \in \widehat{FlDr}(r,r+1;n)$ is equivalent to $(\mu',\mu) \in \widehat{Dr}(r+1;n+1)$ in this case.

If neither of $\mu'$ and $\mu$ is an all-$\infty$ vector, then \Cref{thm:fibration} implies that
\[
(\mu',\mu) \in \widehat{FlDr}(r,r+1;n) \implies (\mu',\mu)\in \widehat{Dr}(r+1;n+1),
\]
and \Cref{cor:firstprop}.(3) implies that $(\mu',\mu) \in Dr(r+1;n+1) \implies (\mu',\mu)\in  \widehat{FlDr}(r,r+1;n)$.
\end{proof}

When $r=1$, \Cref{cor:affineconesame} follows from observing that the collections of tropical Pl\"ucker relations that define $Dr(2;n+1)$ and $Fl(1,2;n)$ are identical after simply renaming the variables $P_i \in \PP(\TT^{[n]\choose 1})$ to $P_{i\cup 0}$.  This observation however fails for $r>1$.

\subsection{Realizability for small ground sets}
\label{subsec:smallground}

We compare the tropicalization of a partial flag variety and a flag Dressian in this subsection.  Due to the nature of this subsection, we use the contents of the geometric Remarks \ref{rem:projtrophyper}, \ref{rem:projtrop}, \ref{rem:geomDr}, \ref{rem:plucker}, and \ref{rem:geomFlDr}.

\medskip
A non-realizable valuated flag matroid corresponds to a point on the flag Dressian that does not lie in the tropicalization of the partial flag variety over any valued field $\kk$ (\Cref{rem:geomFlDr}).
Realizability of a valuated flag matroid can be subtle. In \Cref{eg:m4u26}, we give a valuated flag matroid $(\mu',\mu)$ that is not realizable, but its underlying flag matroid is realizable, and both valuated matroids $\mu'$ and $\mu$ are realizable over a common field.  For small ground sets realizability is guaranteed.   

\begin{thm}\label{thm:Fl=FlDr}
For $n\leq 5$, the tropicalization $\trop(Fl(r_1, \ldots, r_s;n))$ of a flag variety $Fl(r_1, \ldots, r_s;n)$ embedded in $\PP(\kk^{E\choose r_1}) \times \cdots \times \PP(\kk^{E\choose r_s})$ is equal to the flag Dressian $FlDr(r_1, \ldots, r_s;n)$.  Equivalently, for a valued field $\kk$ satisfying $\operatorname{val}(\kk) = \TT$, every valuated flag matroid on a ground set of size at most 5 is realizable over $\kk$.
\end{thm}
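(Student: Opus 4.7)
The strategy is to combine Theorem \ref{thm:fibration} with the classical equality $Dr(r;n) = \trop(Gr(r;n))$ for $n \leq 6$. The latter is due to Speyer--Sturmfels, where the only substantive case is $(r,n)=(3,6)$, the rest being reducible to duality or lower-rank standards. Equivalently, every valuated matroid on at most six elements is realizable over any valued field $\kk$ with $\operatorname{val}(\kk) = \TT$.

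First I would handle pairs of consecutive ranks $(r, r+1)$ on $[n]$ with $n \leq 5$. Given $(\mu_1, \mu_2) \in FlDr(r, r+1;n)$, by Theorem \ref{thm:fibration} choose any lift $\widetilde\mu \in Dr(r+1; n+1)$. Since $n+1 \leq 6$, realize $\widetilde\mu = \mu(\widetilde L)$ for some $(r+1)$-dimensional subspace $\widetilde L \subset \kk^{[n+1]}$. Setting $L_1 := \widetilde L \cap \{x_0 = 0\}$ and $L_2 := \pi(\widetilde L) \subset \kk^{[n]}$, where $\pi\colon \kk^{[n+1]} \twoheadrightarrow \kk^{[n]}$ is the coordinate projection dropping the $0$th coordinate, gives a flag $L_1 \subseteq L_2$. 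A routine Pl\"ucker computation yields $P_I(L_1) = \pm P_{I\cup 0}(\widetilde L)$ for $I \in \binom{[n]}{r}$ and $P_J(L_2) = P_J(\widetilde L)$ for $J \in \binom{[n]}{r+1}$; recalling that the image of $\widetilde\mu$ under the map of Theorem \ref{thm:fibration} is $(\widetilde\mu/_0, \widetilde\mu\setminus_0)$, this verifies $\mu(L_1) = \mu_1$ and $\mu(L_2) = \mu_2$.

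For a general valuated flag matroid $(\mu_1, \ldots, \mu_s)$ on $[n]$ with $n \leq 5$, I would induct on $s$; the base case $s=1$ is immediate from the Speyer--Sturmfels input. For the inductive step, given a realization $L_1 \subseteq \cdots \subseteq L_{s-1}$ of $(\mu_1, \ldots, \mu_{s-1})$, the task is to produce $L_s \supseteq L_{s-1}$ realizing $\mu_s$. Subspaces containing $L_{s-1}$ are parametrized by the Grassmannian $Gr(r_s - r_{s-1};\ \kk^{[n]}/L_{s-1})$ on a ground set of size $n - r_{s-1} \leq 4$, and the valuated matroid quotient $\mu_{s-1} \twoheadleftarrow \mu_s$ (guaranteed by the valuated flag condition) descends, once $L_{s-1}$ is fixed, to a valuated matroid on this smaller Grassmannian. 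The small-ground-set realizability then produces the desired $L_s$. For non-consecutive rank jumps $r_s - r_{s-1} \geq 2$, one alternatively factors $\mu_{s-1} \twoheadleftarrow \mu_s$ into elementary valuated quotients (a valuated analogue of Higgs factorization) and applies the consecutive case iteratively.

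The main obstacle is the descent step within the induction. One must exhibit explicitly the valuated matroid on $\kk^{[n]}/L_{s-1}$ that governs extensions of $L_{s-1}$ to realizations of $\mu_s$, and verify that its realizations lift to flag extensions. Theorem \ref{thm:tropquotient} shows that the valuated quotient $\mu_{s-1} \twoheadleftarrow \mu_s$ is precisely the condition $\overline\trop(\mu_{s-1}) \subseteq \overline\trop(\mu_s)$, and the projective tropical linear space framework of Theorem \ref{thm:main2} is crucial for tracking loops and coloops that appear after contracting by $L_{s-1}$; without those characterizations (especially \ref{thm2:pt5}, the closure description), matching the descended valuated matroid with honest Pl\"ucker data on the smaller Grassmannian would be opaque. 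The hypothesis $n \leq 5$ enters precisely to ensure that every auxiliary Dressian encountered during the induction has ground set of size at most $6$, so that realizability on each fiber is available.
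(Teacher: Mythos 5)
Your first paragraph is sound and matches the spirit of the paper's argument for consecutive ranks: lift $(\mu_1,\mu_2) \in FlDr(r,r+1;n)$ via Theorem~\ref{thm:fibration} to a point of $Dr(r+1;n+1)$, realize it (possible since $n+1\leq 6$), and then take the intersection with $\{x_0=0\}$ and the projection. The Pl\"ucker calculation you sketch is exactly why this works, and it is what the paper implicitly invokes (compare the remark after Theorem~\ref{thm:fibration}).

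The inductive step in your second paragraph has a genuine gap, and you yourself flag it without resolving it. The quotient $\kk^{[n]}/L_{s-1}$ has no canonical coordinate structure when $L_{s-1}$ is a generic subspace, so there is no canonical ``ground set of size $n-r_{s-1}$'' and consequently no canonical Dressian in which the proposed descended object would live. Tropicalization is tied to the fixed coordinates of $\kk^{[n]}$, and the linear isomorphism $\{L_s \supseteq L_{s-1}\} \cong Gr(r_s-r_{s-1};\kk^{[n]}/L_{s-1})$ does not intertwine with valuations in any way that would produce a valuated matroid on a small ground set whose realizations correspond to the desired $L_s$. Even choosing a coordinate complement $T\subset[n]$ to identify $\kk^{[n]}/L_{s-1}\cong\kk^T$ does not help: the resulting valuated matroid on $T$, if it exists at all, would depend on $T$ and on the chosen realization of $L_{s-1}$, and one would still need to verify that its realizations lift back to $L_s$ realizing $\mu_s$ --- which is essentially the statement to be proved, making the argument circular. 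The appeal to a ``valuated analogue of Higgs factorization'' for non-consecutive rank jumps is also not an established result and is not proved.

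The paper sidesteps all of this. Its auxiliary lemma (Lemma~\ref{lem:real}) only allows prepending a rank-$1$ valuated matroid (or, dually, appending one of rank $n-1$) to an already-realizable flag. This works because a rank-$1$ projective tropical linear space is a single point of $\overline\trop(\mu_2)=\overline\trop(L_2)$, and the lifting property in the Fundamental Theorem of Tropical Geometry guarantees a point of $\PP(L_2)$ with the prescribed valuation --- no quotient Grassmannians, no descent, no factorization. The hypothesis $n\leq 5$ then enters in a purely combinatorial way: it forces the rank sequences to be small enough that every $Fl(r_1,\ldots,r_k;n)$ can be reached from one of the base cases $Fl(1,2;n)$, $Fl(n-2,n-1;n)$, $Fl(2,3;5)$, or a single Grassmannian by iterated prepending of rank $1$ and appending of rank $n-1$. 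This is a narrower but self-contained inductive scheme that avoids the unresolved descent step in your proposal.
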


A valued field $\kk$ satisfying $\operatorname{val}(\kk) = \TT$ exists in every characteristic; see \cite[\S3]{Poo93} for an example known as Mal'cev-Neumann rings.  \Cref{thm:Fl=FlDr} fails for $n\geq 6$; see \Cref{eg:m4u26}.
We prepare the proof of the theorem with a lemma.  
\begin{lem}\label{lem:real}
Let $\kk$ be a valued field, and write $\Gamma := \operatorname{val}(\kk) \subseteq \TT$. Suppose ${\boldsymbol \mu} = (\mu_1, \mu_2, \ldots, \mu_k)$ is a valuated flag matroid on $[n]$ with $\operatorname{rk}(\mu_1) = 1$ such that $(\mu_2, \ldots, \mu_{k})$ is realizable over $\kk$, and $\mu_1$ as an element of $\TT^{\binom{[n]}{1}}$ has coordinates in $\Gamma$.  Then ${\boldsymbol \mu}$ is realizable over $\kk$.  By duality, if ${\boldsymbol \mu} = (\mu_1, \mu_2, \ldots, \mu_k)$ is a valuated flag matroid such that $(\mu_1, \ldots, \mu_{k-1})$ is realizable over $\kk$ and $\operatorname{rk}(\mu_{k}) = n-1$, then ${\boldsymbol \mu}$ is also realizable over $\kk$ when $\mu_k \in \Gamma^{\binom{[n]}{n-1}}$. 
\end{lem}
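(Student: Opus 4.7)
The plan is to take a realization $L_2 \subseteq \cdots \subseteq L_k \subseteq \kk^n$ of $(\mu_2, \ldots, \mu_k)$ over $\kk$ and construct a line $L_1 \subseteq L_2$ whose valuated matroid is $\mu_1$.  Because $\mu_1$ has rank $1$, its only valuated cocircuit is $\mu_1$ itself, viewed as an element of $\TT^{[n]}$ (cf.\ \Cref{defn:valuatedcircuits}), so \Cref{thm:main2}.\ref{thm2:pt4} gives $\overline{\trop}(\mu_1) = \{\overline{\mu_1}\}$, a single point of $\PP(\TT^{[n]})$.  \Cref{thm:tropquotient} then places $\overline{\mu_1}$ inside $\overline{\trop}(\mu_2)$, and \Cref{geomtroplin} identifies this with the projective tropicalization $\overline{\trop}(L_2)$.

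The next step is to lift the tropical point $\overline{\mu_1}$ to an actual vector of $L_2$.  Setting $S := \operatorname{supp}(\mu_1)$, we have $\overline{\mu_1} \in T_S$, and by \Cref{rem:projtrop} the intersection $\overline{\trop}(L_2) \cap T_S$ equals the usual tropicalization of the linear subvariety $\mathring L_{2,S} := L_2 \cap O_S$ of the algebraic torus $(\kk^*)^S/\kk^*$.  Since the finite coordinates of $\mu_1$ lie in $\Gamma = \operatorname{val}(\kk)$ and $\kk$ is algebraically closed, the Fundamental Theorem of Tropical Geometry \cite[Theorem~3.2.3]{MS15} produces a representative $v' \in \mathring L_{2,S}$ with $\operatorname{val}(v') = \mu_1|_S$.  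Extending $v'$ by zero in coordinates outside $S$ yields $v \in L_2 \subseteq \kk^n$ with $\operatorname{val}(v_i) = \mu_1(i)$ for every $i \in [n]$ (using $\operatorname{val}(0) = \infty$).  Setting $L_1 := \kk \cdot v$ gives a line in $L_2$ with $\mu(L_1) = \mu_1$, so the flag $L_1 \subseteq L_2 \subseteq \cdots \subseteq L_k$ realizes $\boldsymbol\mu$.

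The dual statement reduces to the first part by taking orthogonal complements.  The flag $L_{k-1}^\perp \subseteq \cdots \subseteq L_1^\perp \subseteq \kk^n$ realizes $(\mu_{k-1}^*, \ldots, \mu_1^*)$, and the sequence $(\mu_k^*, \mu_{k-1}^*, \ldots, \mu_1^*)$ is a valuated flag matroid with $\operatorname{rk}(\mu_k^*) = 1$ and $\mu_k^*(\{i\}) = \mu_k([n]\setminus i) \in \Gamma$.  Applying the first part produces a realizing flag whose orthogonal complements form the desired realization of $\boldsymbol\mu$.

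The main potential obstacle is the lifting step in the second paragraph; this is where the hypothesis $\mu_1 \in \Gamma^{\binom{[n]}{1}}$ is essential for invoking the Fundamental Theorem.  A subtle point in that step is tracking supports: the boundary stratification of \Cref{rem:projtrop} is precisely what allows us to lift a point of $T_S$ to a vector of $L_2$ with zero coordinates outside $S$, so that $L_1$ realizes $\mu_1$ and not some valuated matroid supported on a strictly larger set.
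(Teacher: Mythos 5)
Your proof is correct and follows the same route as the paper: realize the tail $(\mu_2,\ldots,\mu_k)$, observe that $\overline{\trop}(\mu_1)$ is a single point lying on $\overline{\trop}(\mu_2)=\overline{\trop}(L_2)$ by \Cref{thm:tropquotient}, lift it via the Fundamental Theorem of Tropical Geometry, and pass to orthogonal complements for the dual case. You spell out the boundary-stratum bookkeeping that the paper handles implicitly by citing the toric-variety form of the Fundamental Theorem \cite[Theorem 6.2.15]{MS15}, but the argument is the same.
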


\begin{proof}
Let a flag $ L_2 \subset \cdots \subset L_{k}\subset \kk^E $ be a realization of $(\mu_2, \ldots, \mu_{k})$.  We need to show that that there exists a one-dimensional space $L_1$, that is, a point in $\PP(\kk^E)$, such that $L_1\subset L_{2}$ and $\overline{\trop}(\mu_1) = \overline{\trop}(L_1)$.  But since $\operatorname{rk}(\mu_1) = 1$, the space $\overline{\trop}(\mu_1)$ is a single point, which by \Cref{thm:tropquotient} is on $\overline{\trop}(\mu_{2}) = \overline{\trop}(L_{2})$.  By the lifting property in the Fundamental Theorem of Tropical Geometry \cite[Theorem 3.2.3, Theorem 6.2.15]{MS15}, there exists a point $p_1\in \PP(L_{2}) \subset \PP(\kk^E)$ with $\overline{\trop}(p_1) = \overline{\trop}(\mu_1)$.
\end{proof}

\begin{proof}[Proof of \Cref{thm:Fl=FlDr}]
We first note some previous results:
\begin{itemize}
\item One has $\trop(Gr(1;n)) = Dr(1;n) = \PP(\TT^E)$, and dually, $\trop(Gr(n-1;n)) = Dr(n-1;n)$.
\item For any $n$, one has $\trop(Gr(2;n)) = Dr(2;n)$, and dually, $\trop(Gr(n-2;n)) = Dr(n-2;n)$ \cite[Corollary 4.3.12]{MS15}.
\item One has $\trop(Gr(3;6)) = Dr(3;6)$ \cite[Example 4.4.10]{MS15}.
\end{itemize}
By \Cref{thm:fibration}, the desired statement thus holds for $Fl(1,2;n)$, its dual $Fl(n-2,n-1;n)$, and $Fl(2,3;5)$.  The rest of the cases for $n\leq 5$ then follow from \Cref{lem:real}.
\end{proof}


\begin{eg}
Let $Fl_4 := Fl(1,2,3;4)$, and denote by $\mathring{Fl}_4$ the very affine variety obtained as the intersection of $Fl_4$ embedded in $\PP^3 \times \PP^5 \times \PP^3$ with the torus $(\kk^*)^4/\kk^* \times (\kk^*)^6/\kk^* \times (\kk^*)^4/\kk^*$. The $f$-vector of its tropicalization $\trop(\mathring{Fl}_4)$, with the Gr\"obner complex for its polyhedral complex structure, was computed in \cite{BLMM17} to be $(1,20,79,78)$ with the aid of a computer.  We now give an explicit description of the combinatorial structure of $\trop(\mathring{Fl}_4)$.

By \Cref{thm:Fl=FlDr}, we have that $\trop(\mathring{Fl}_4) = FlDr(\UU_{1,2,3;4})$ where $\UU_{1,2,3;4} = (U_{1,4}, U_{2,4},U_{3,4})$.  If $\mu$ is a valuated matroid whose underlying matroid is $U_{2,4}$, then $\trop(\mu)$ is a translate of $\trop(\mu^*)$.  Thus, by \Cref{thm:tropquotient}, one can identify the space $FlDr(\UU_{1,2,3;4})$ as the parameter space of two labeled points on a tropical line.  Using this, we completely describe the polyhedral complex structure of $\trop(\mathring{Fl}_4) = FlDr(\UU_{1,2,3;4})$ in Figures \ref{fig:fl4_cells_edges} and \ref{fig:fl4_cells}.  The pictorial representations of the maximal cells in \cite[Fig.\ 2]{BLMM17} are related to but different from ours.

\begin{figure*}[h]
    \centering
    \begin{subfigure}[t]{0.14\textwidth}
        \centering
        \includegraphics[width = \linewidth]{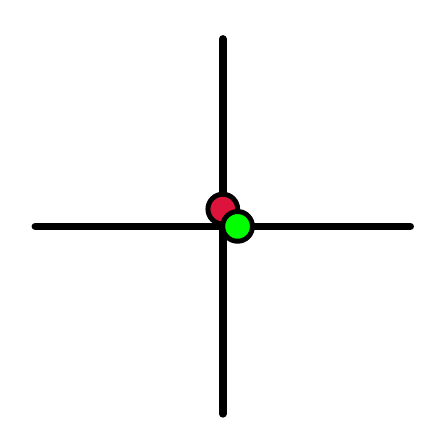}
        \caption*{The origin}
    \end{subfigure}%
    \hspace{0.2 in}
    \begin{subfigure}[t]{0.14\textwidth}
        \centering
        \includegraphics[width = \linewidth]{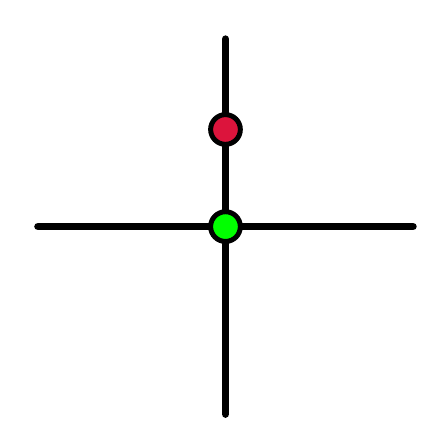}
        \caption*{$2\cdot 4$ rays}
    \end{subfigure}%
        \hspace{0.2 in}
        \begin{subfigure}[t]{0.14\textwidth}
        \centering
        \includegraphics[width = \linewidth]{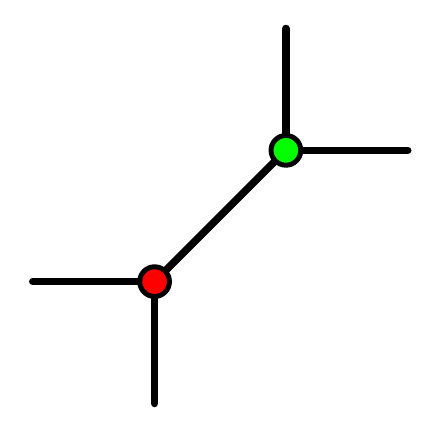}
        \caption*{$3 \cdot 2$ rays}
    \end{subfigure}%
        \hspace{0.2 in}
        \begin{subfigure}[t]{0.14\textwidth}
        \centering
        \includegraphics[width = \linewidth]{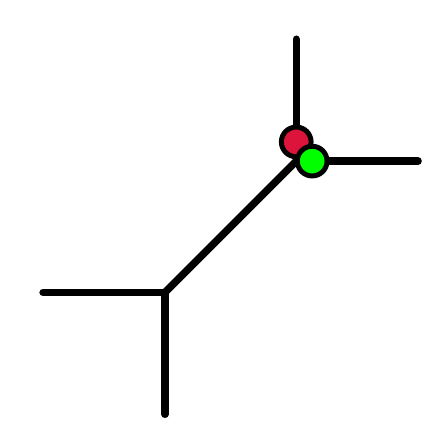}
        \caption*{$3 \cdot 2$ rays}
    \end{subfigure}%

    \begin{subfigure}[t]{0.30\textwidth}
        \centering
        \includegraphics[width = \linewidth]{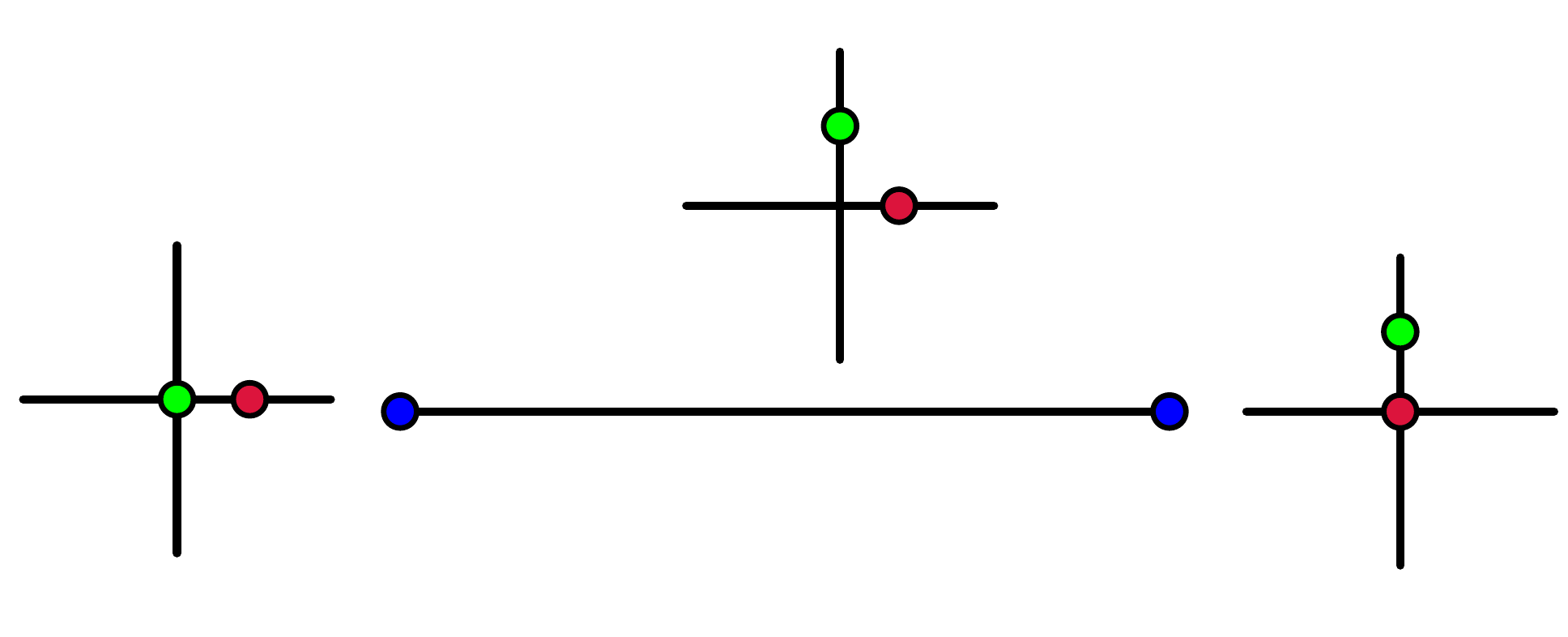}
        \caption*{$\binom{4}{2}\cdot 2$ edges}
    \end{subfigure}%
    \hspace{0.2 in}
    \begin{subfigure}[t]{0.30\textwidth}
        \centering
        \includegraphics[width = \linewidth]{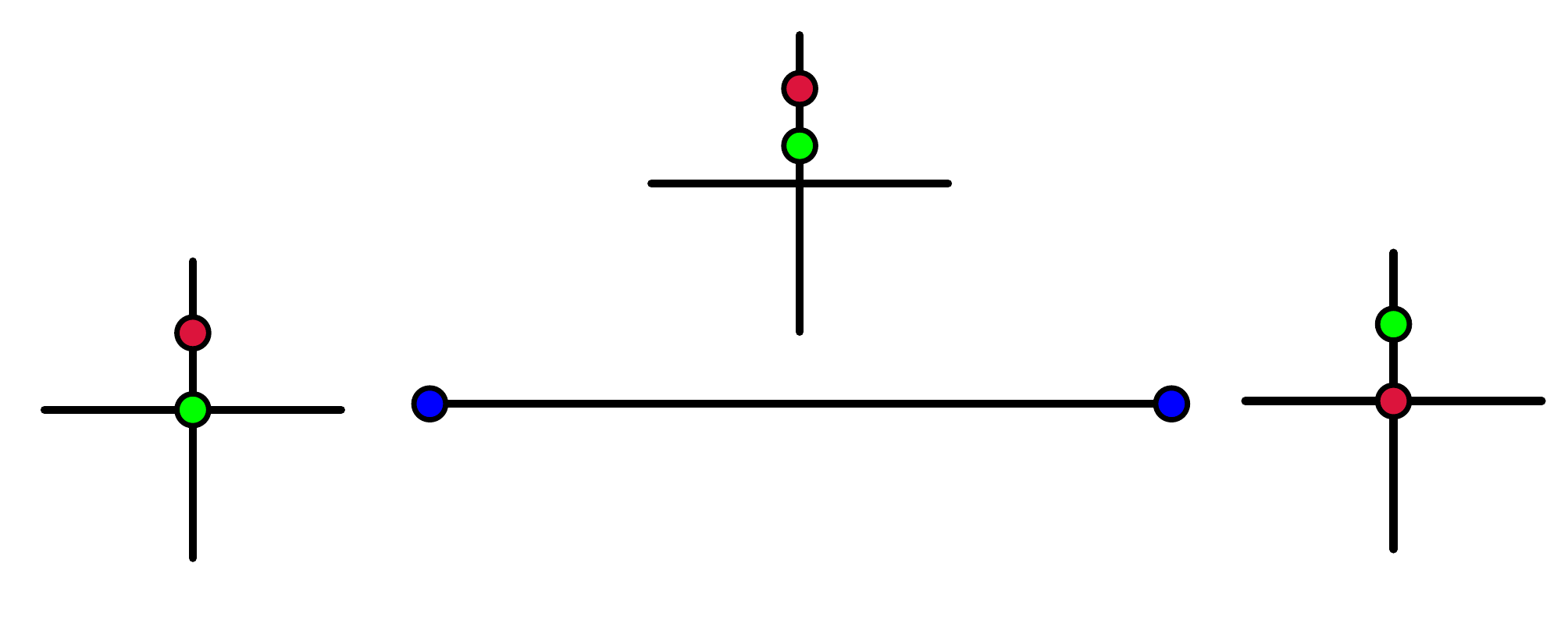}
        \caption*{4 edges}
    \end{subfigure}%
        \hspace{0.2 in}
    \begin{subfigure}[t]{0.30\textwidth}
        \centering
        \includegraphics[width = \linewidth]{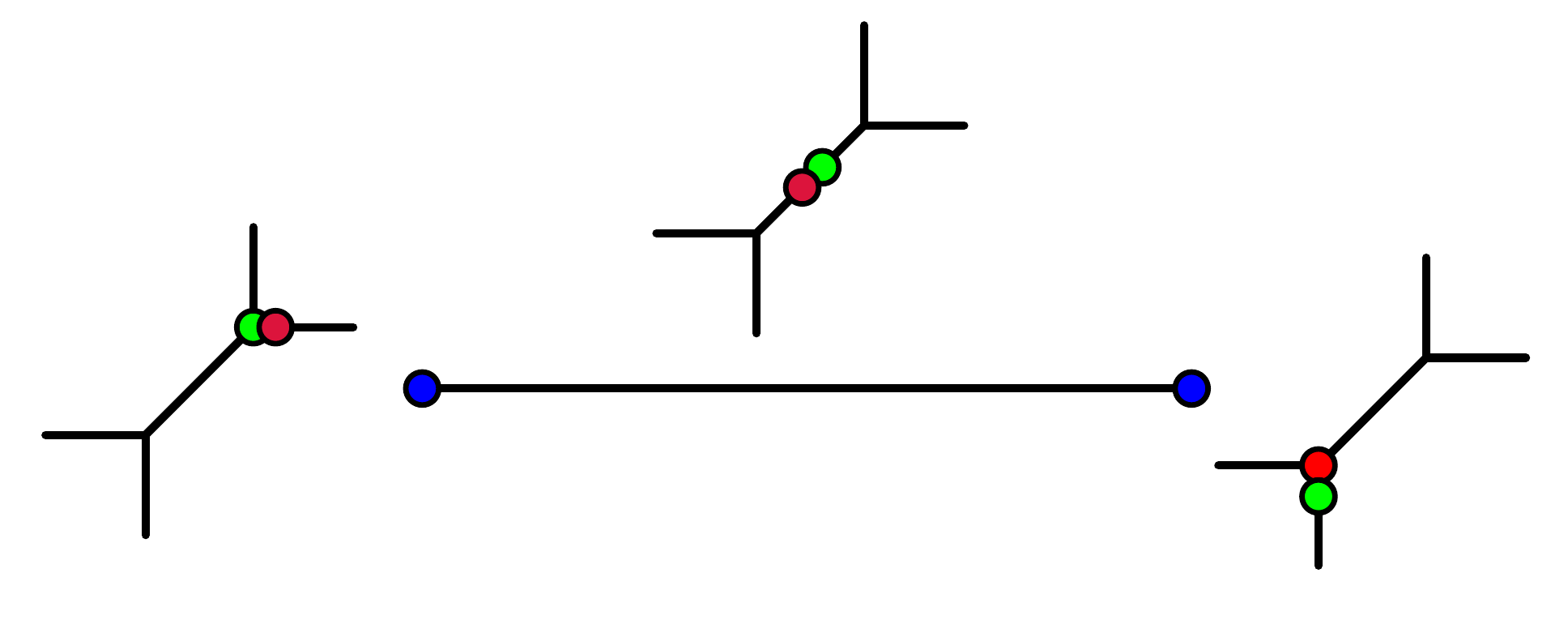}
        \caption*{3 edges}
    \end{subfigure}%
        \hspace{0.2 in}
    \begin{subfigure}[t]{0.30\textwidth}
        \centering
        \includegraphics[width = \linewidth]{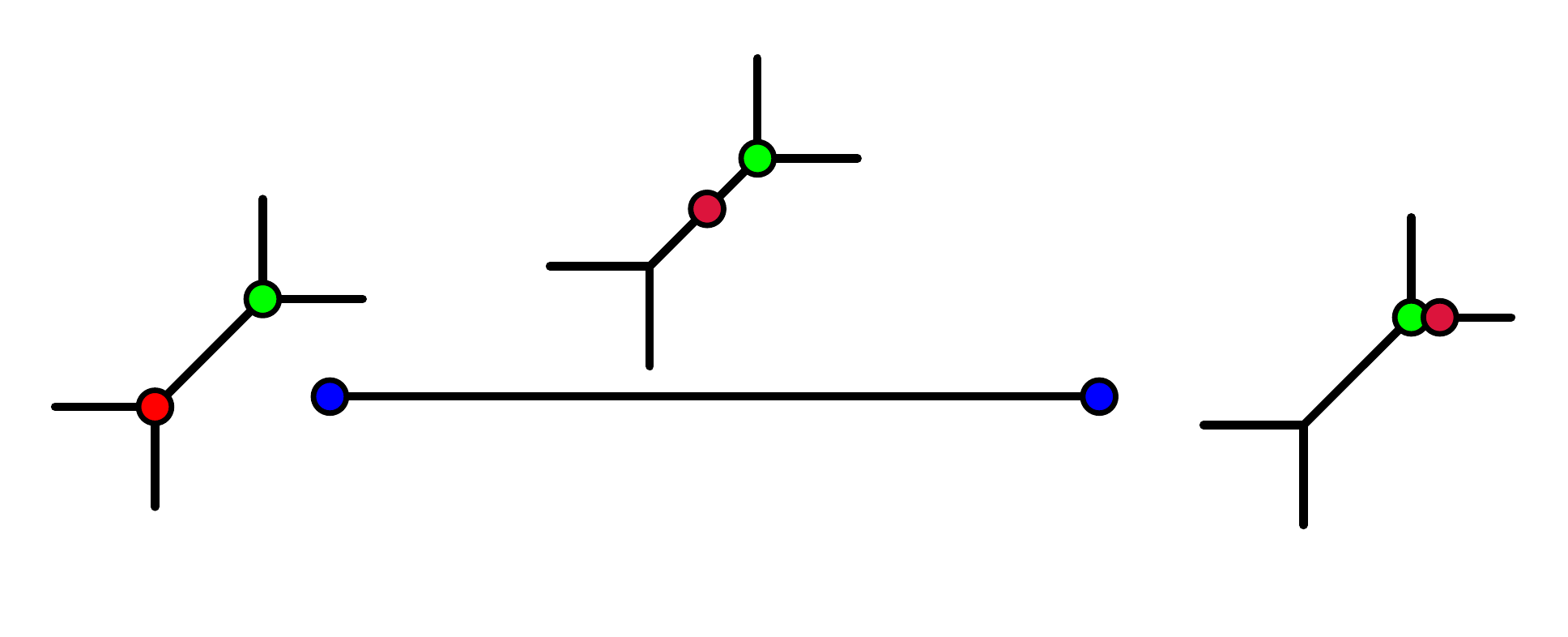}
        \caption*{$3\cdot2\cdot3$ edges}
    \end{subfigure}%
        \hspace{0.2 in}
        \begin{subfigure}[t]{0.30\textwidth}
        \centering
        \includegraphics[width = \linewidth]{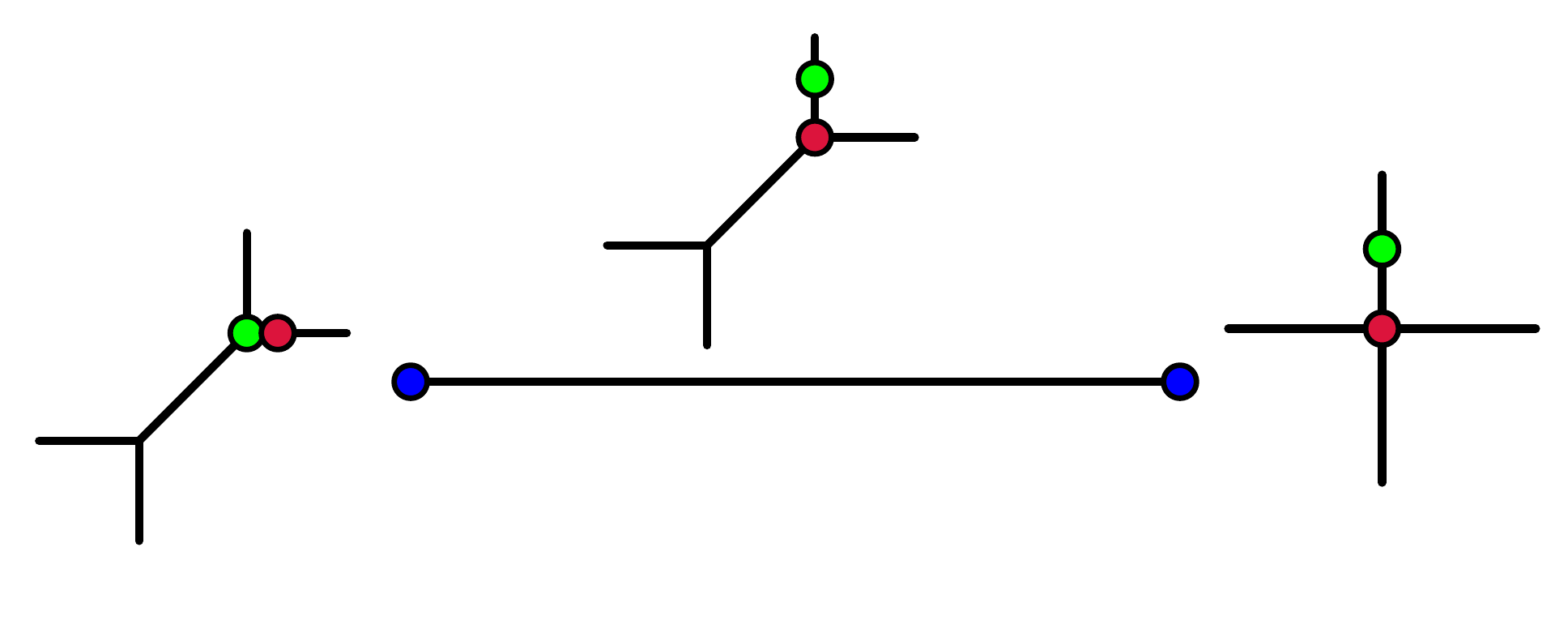}
        \caption*{$3 \cdot 2 \cdot 2 \cdot 2$ edges}
    \end{subfigure}%
        \hspace{0.2 in}
        \begin{subfigure}[t]{0.30\textwidth}
        \centering
        \includegraphics[width = \linewidth]{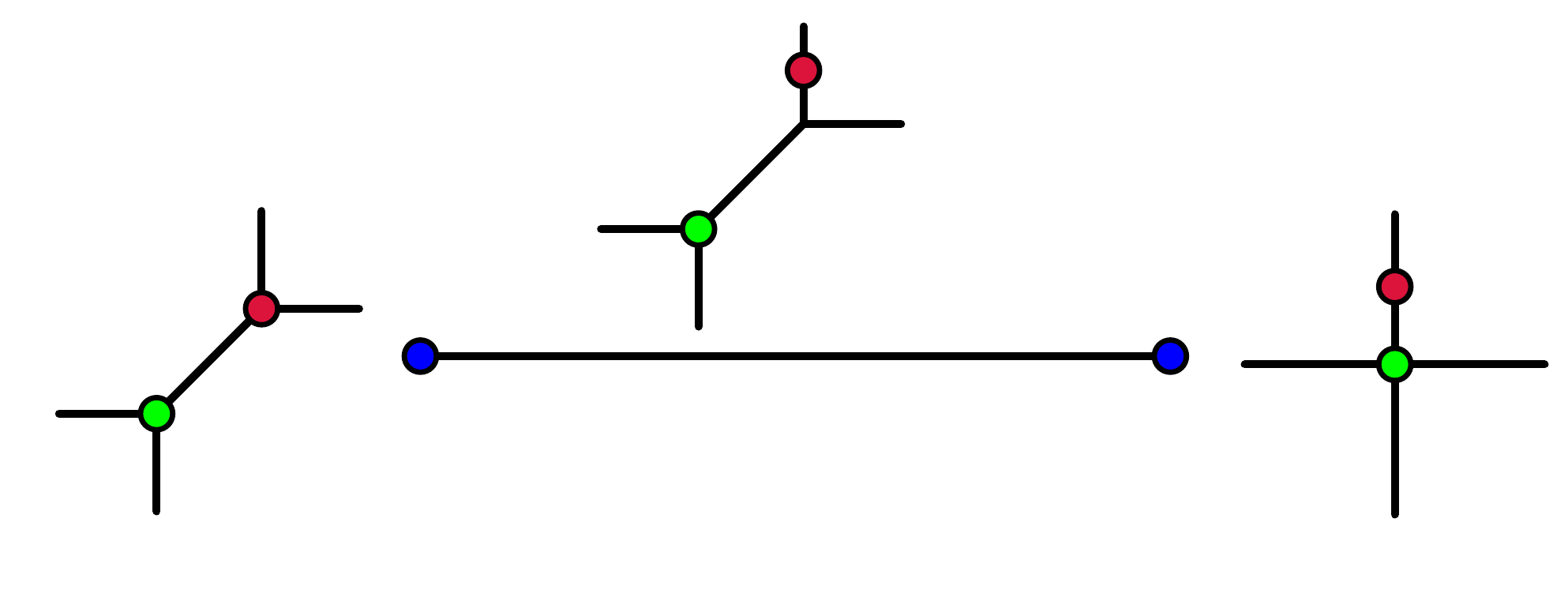}
        \caption*{$3 \cdot 2 \cdot 2 \cdot 2$ edges}
    \end{subfigure}%
    \caption{The origin, 20 rays, and 79 edges of  $FlDr(\UU_{1,2,3;4})$.}
    \label{fig:fl4_cells_edges}
\end{figure*}

\begin{figure*}[h]
    \centering

    \begin{subfigure}[t]{0.24\textwidth}
        \centering
        \includegraphics[width = \linewidth]{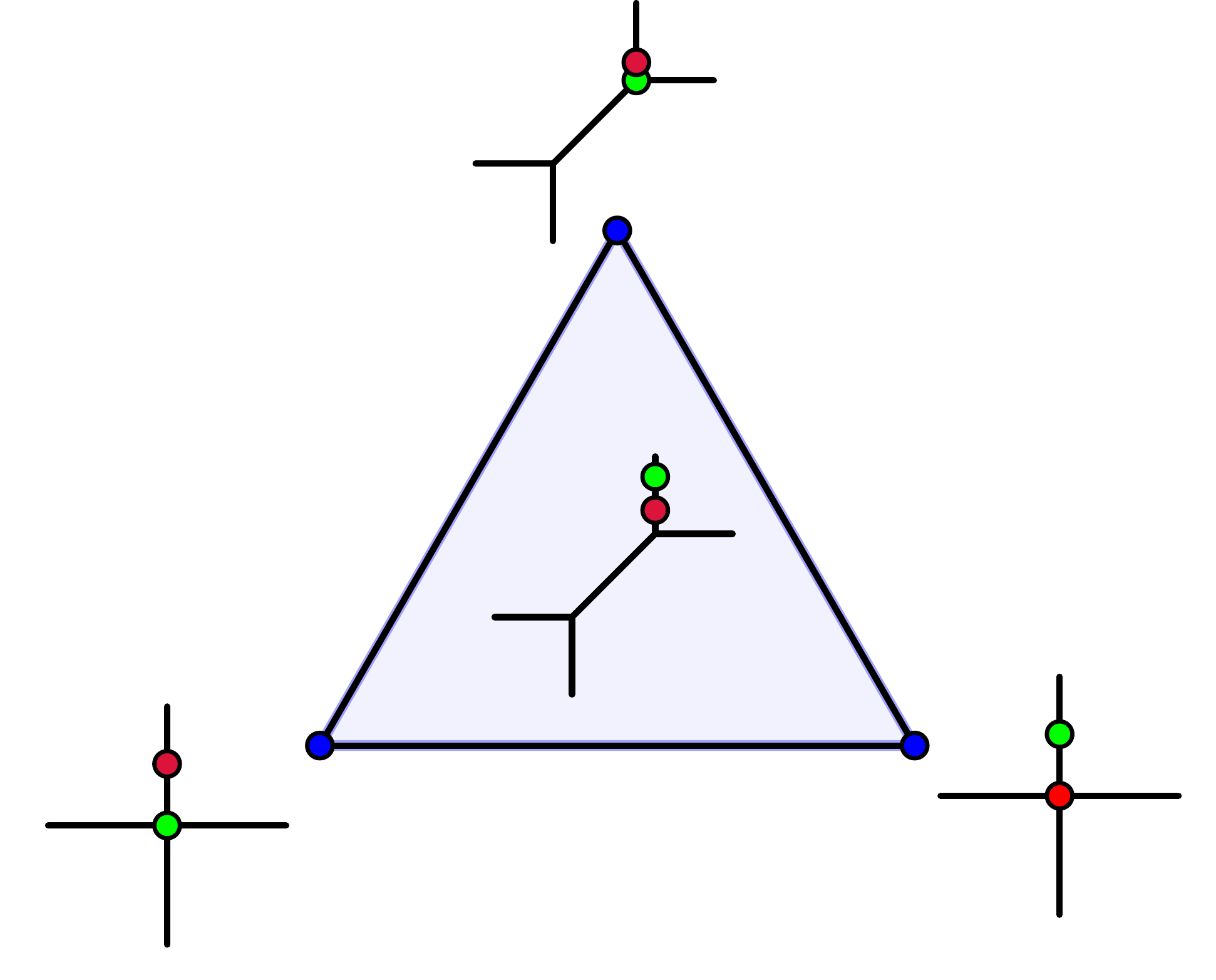}
        \caption*{$3 \cdot 4$ triangles}
    \end{subfigure}%
    \hspace{0.2 in}
    \begin{subfigure}[t]{0.24\textwidth}
        \centering
        \includegraphics[width = \linewidth]{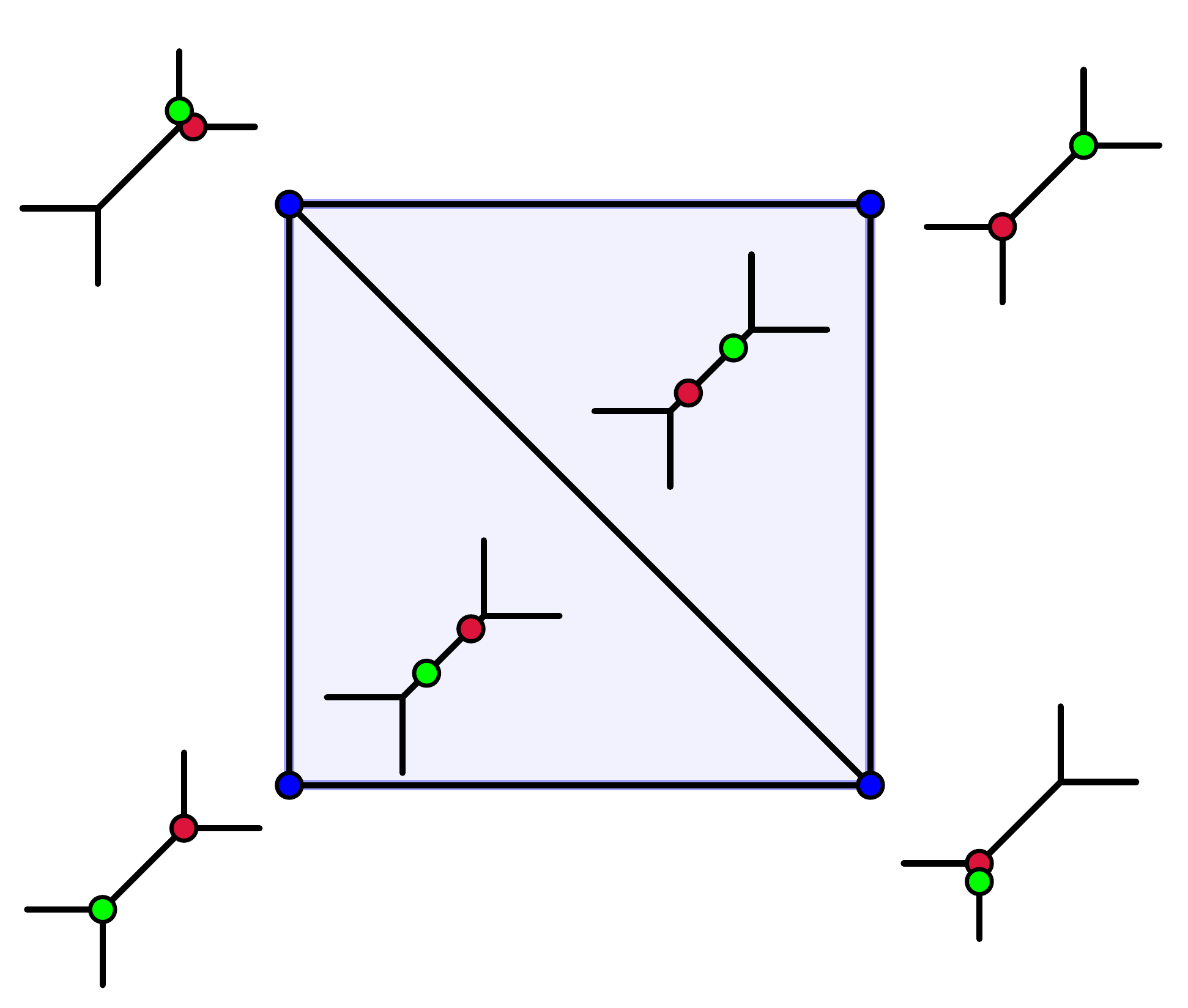}
        \caption*{$3 \cdot 2$ subdivided squares}
    \end{subfigure}%

        \hspace{0.2 in}
    \begin{subfigure}[t]{0.24\textwidth}
        \centering
        \includegraphics[width = \linewidth]{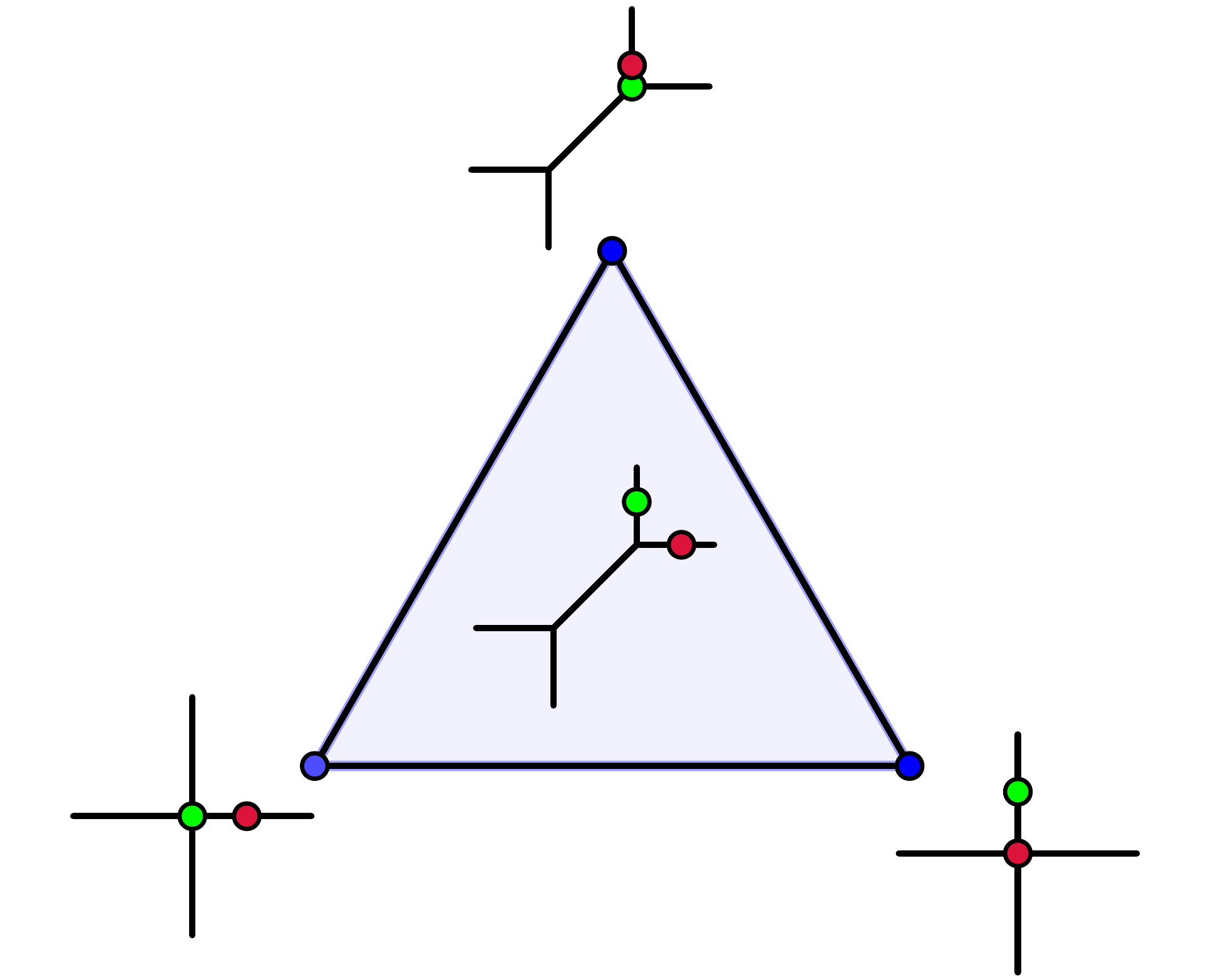}
        \caption*{$3\cdot 2 \cdot 2$ triangles }
    \end{subfigure}%
        \hspace{0.2 in}
        \begin{subfigure}[t]{0.24\textwidth}
        \centering
        \includegraphics[width = \linewidth]{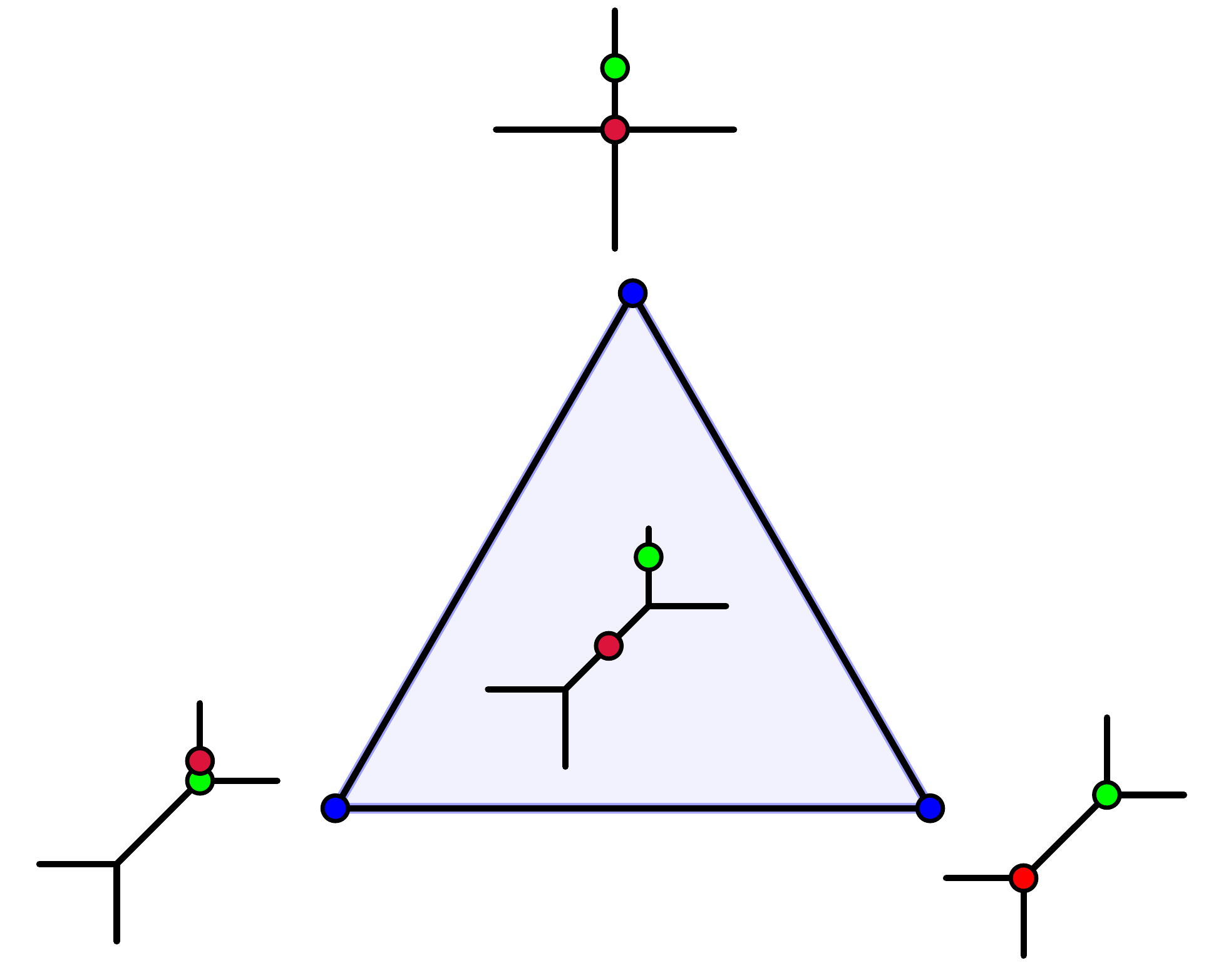}
        \caption*{$3\cdot 4 \cdot 2$ triangles }
    \end{subfigure}%
        \hspace{0.2 in}
        \begin{subfigure}[t]{0.24\textwidth}
        \centering
        \includegraphics[width = \linewidth]{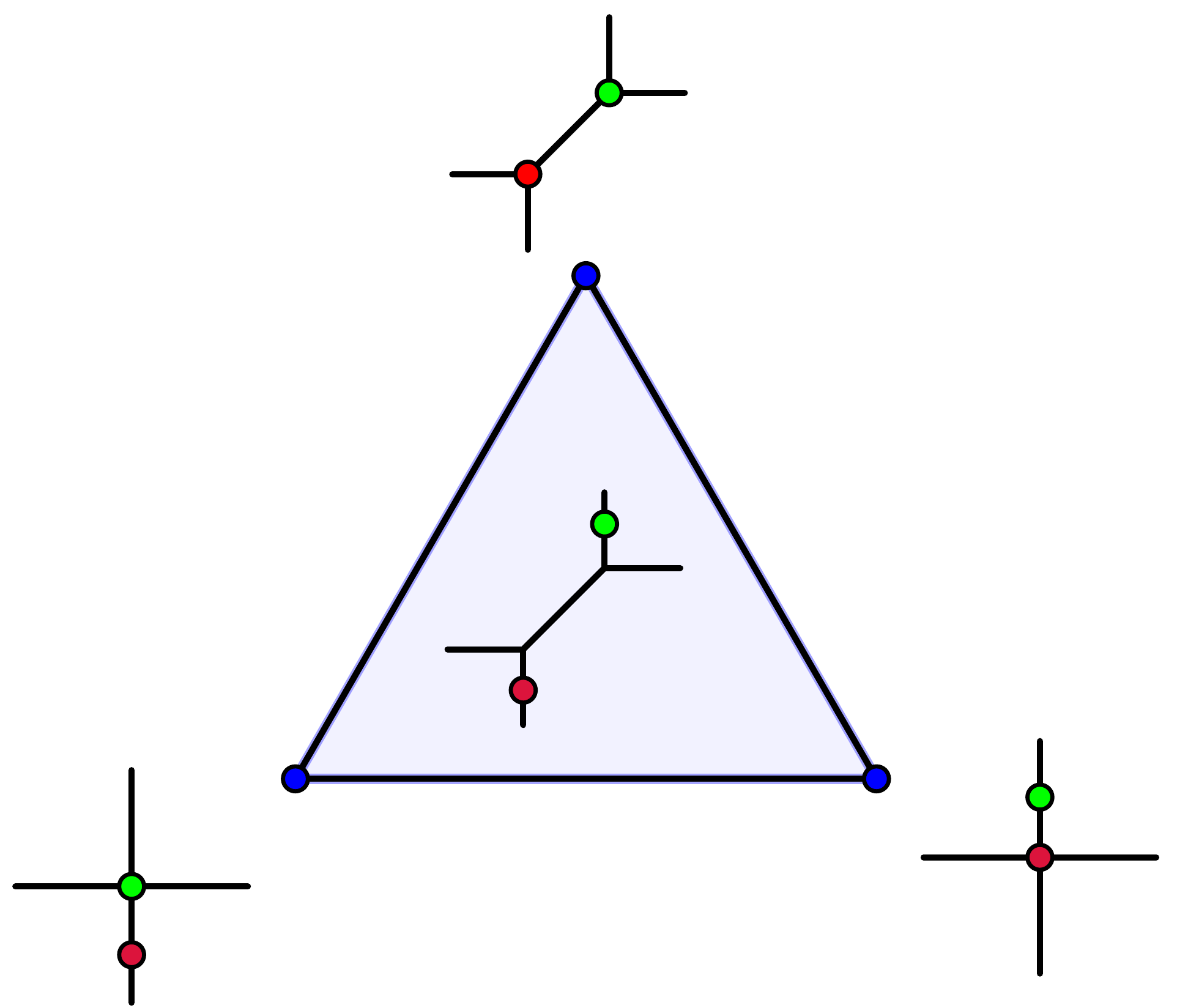}
        \caption*{$3 \cdot 2 \cdot 2 \cdot 2$ triangles }
    \end{subfigure}%
    \caption{The 78 2-cells of $FlDr(\UU_{1,2,3;4})$.  78 is also $3\cdot 5^2 +3$ (three kinds of tropical lines, each with five 1-cells, plus 3 from the subdivided squares).}
    \label{fig:fl4_cells}
\end{figure*}
\end{eg}

\newpage
The next example highlights the subtleties of realizability.
In light of \Cref{thm:fibration}, the example below is closely related to \cite[Example 4.3.14]{MS15}.

\begin{eg}
\label{eg:m4u26}
Consider the flag matroid $\MM = (U_{2,6},M_4)$ pictured in Figure \ref{fig:m4u26}.
\begin{figure}[h]
    \centering
    \includegraphics[height = 1.8 in]{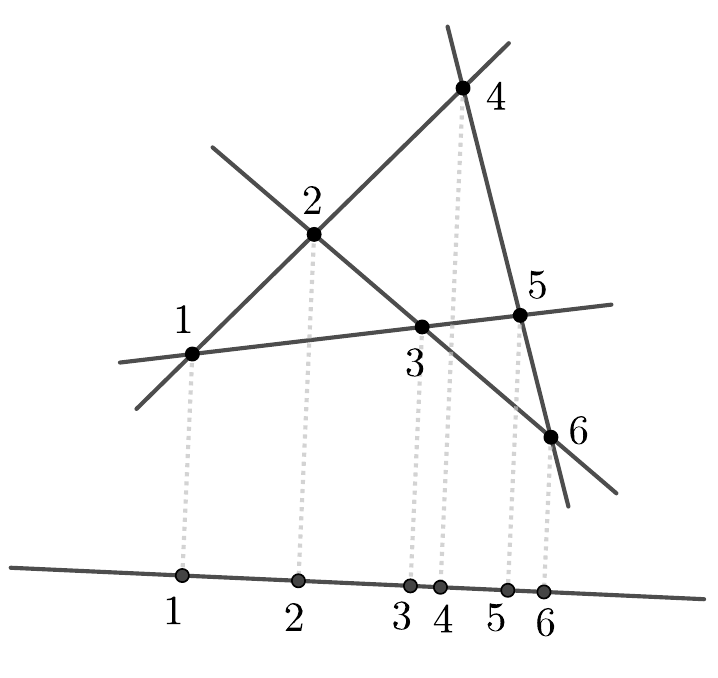}
    \caption{The flag matroid considered in Example \ref{eg:m4u26}.}
    \label{fig:m4u26}
\end{figure}
The matroid $M_4$ is the rank $3$ matroid on $E=\{1,\ldots, 6\}$ with circuit hyperplanes $\{124,135,236,456\}$. The affine cone of the flag Dressian $FlDr(\MM)$ is a 10 dimensional fan, with a 7 dimensional lineality space. Modulo lineality and intersecting with a sphere, it has 13 rays, 21 edges, and one triangle, depicted in Figure~\ref{fig:m4u26_dress}.
\begin{figure}[h]
    \centering
    \includegraphics[width=0.80\linewidth]{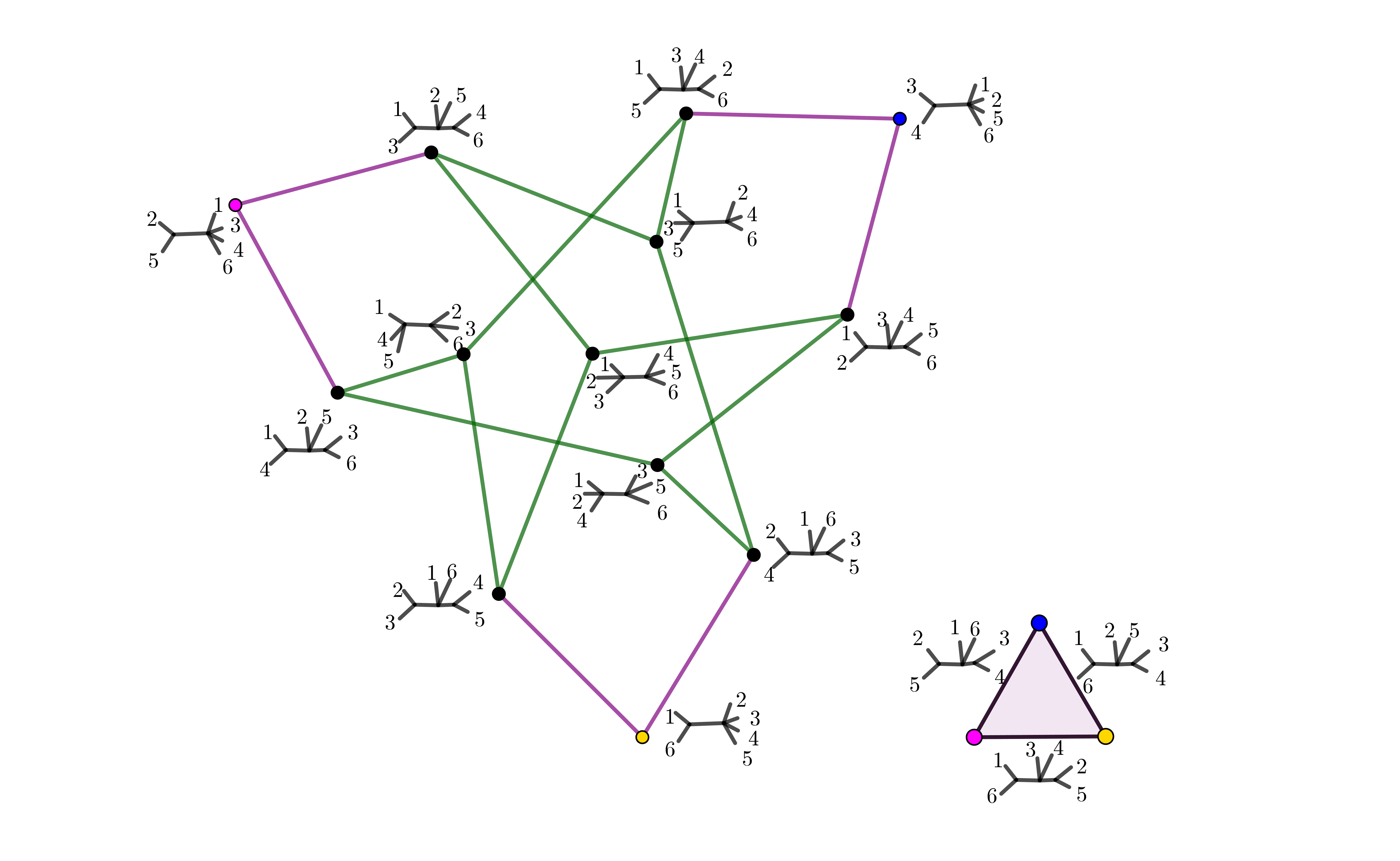}
    \caption{The Dressian of the flag matroid in Example \ref{eg:m4u26}.}
    \label{fig:m4u26_dress}
\end{figure}
In Figure~\ref{fig:m4u26_dress}, rays are labeled with the tree given by the $U_{2,6}$ coordinates. The green edges in the graph correspond to points where the corresponding tree is a caterpillar, and the purple points give snowflake trees.
The triangle is glued to the pink, blue, and yellow vertices as indicated.

Let $V \subset Fl(2,3;6)$ be the space of realizations of $(M_4,U_{2,6})$.  The affine cone of this 
subvariety $V$ of $Fl(2,3;6)$
has dimension 9, but the affine cone of the flag Dressian $FlDr(\MM)$ has dimension 10. Since $\dim { \operatorname{trop}}(V)=\dim V$ \cite[Theorem 3.3.8]{MS15}, 
the flag Dressian $FlDr(\MM)$ must strictly contain $\overline{\operatorname{trop}}(V).$
Indeed, the tropicalization $\trop(V)$
when $\kk$ has characteristic 0 consists of all zero and one dimensional cells in Figure \ref{fig:m4u26_dress}; the interior of the single triangle is removed. Over characteristic 2, some points in the interior of the triangle may be on  $\trop(V)$, but not the entire triangle.

Let us understand the non-realizable points in the interior of this triangle in more detail when $\kk$ has characteristic 0. The Dressians for $U_{2,6}$ and $M_4$ are each tropical varieties, meaning that every point $w$ in each of their Dressians can be realized as vectors over $\kk$ whose Pl\"{u}cker coordinates valuate to $w$. So, the points in the interior of the triangle in $FlDr(\MM)$ correspond to two realizable valuated matroids that fail to form a realizable valuated matroid quotient. 

We see why it is not possible to realize these points as follows. Points on the interior of the triangle correspond to snowflake trees with pairs $\{2,5\},$ $\{1,6\},$ and $\{3,4\}$. In order to realize this over $\kk$, we would need to make a configuration as in Figure \ref{fig:m4u26} such that over the residue field, the projections of the points $\{2,5\}$ coincide, the projections of the points $\{1,6\}$ coincide, and the projections of the points $\{3,4\}$ coincide. The dual picture is shown in Figure \ref{fig:m4dual}. In order to realize the desired snowflake, we need to find a line that intersects the six lines pictured at each of the points of intersection of the lines $2$ and $5$,  $1$ and $6$, and $3$ and $4$. This is only possible over fields of characteristic 2, where the Fano plane is realizable.
\begin{figure}[h]
    \centering
    \includegraphics[height = 2 in]{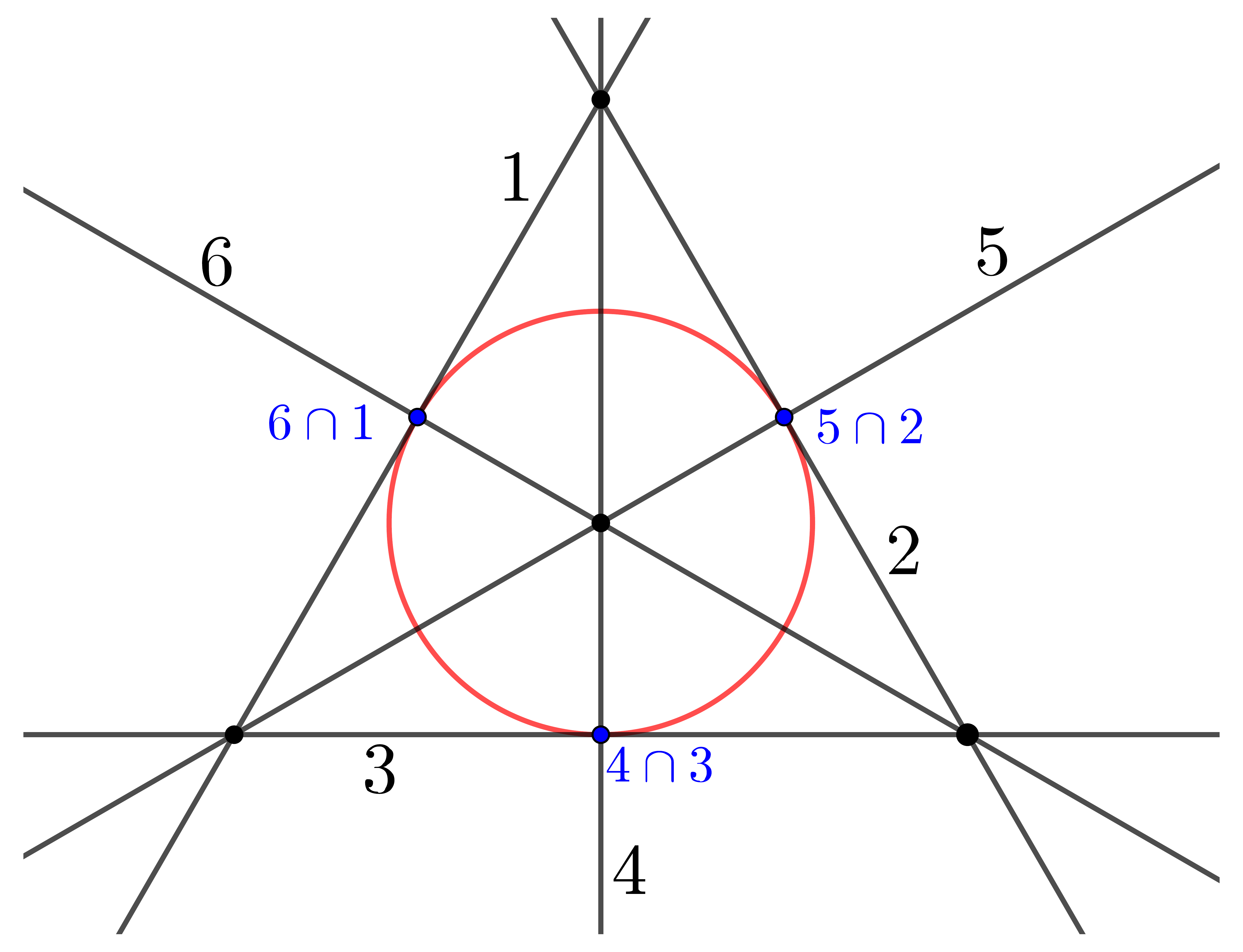}
    \caption{The dual matroid to $M_4$ is pictured in black. The red line must be realizable to produce the trees appearing in the interior of the triangle in Figure \ref{fig:m4u26_dress}.}
    \label{fig:m4dual}
\end{figure}
\end{eg}

\subsection*{Acknowledgements}
We thank Bernd Sturmfels for several useful conversations throughout the development of this article, as well as for his feedback on drafts of this paper. We also thank the Max Planck Institute for Mathematics in the Sciences for its hospitality while working on this project.
We thank Sara Lamboglia for pointing us to Mohammad Haque's work as well as for assistance with \texttt{Macaulay2}. The second author thanks Matt Baker for introducing him to matroids over hyperstructures, and thanks June Huh, Michael Joswig, Gaku Liu, Francisco Santos, and Kristin Shaw for helpful communications.  Leon Zhang was partially supported
by a National Science Foundation Graduate Research Fellowship.

\printbibliography

\end{document}